\definecolor{dmagenta}{rgb}{.4,.1,.5}
\definecolor{dblue}{rgb}{.0,.0,.5}
\definecolor{mblue}{rgb}{.0,.0,.8}
\definecolor{ddblue}{rgb}{.0,.0,.4}
\definecolor{dred}{rgb}{.6,.0,.0}
\definecolor{dgreen}{rgb}{.0,.5,.0}
\definecolor{Eeom}{rgb}{.0,.0,.5}
\newtheorem{lemma}{Lemma}[section]
\newtheorem{theorem}{Theorem}[section]
\newtheorem{corollary}{Corollary}[section]
\theoremstyle{definition}
\newtheorem{definition}{Definition}[section]
\theoremstyle{remark}
\newtheorem{remark}{Remark}[section]
\numberwithin{equation}{section}
\newcommand{\R}{\mathbb{R}} % reelle Zahlen
\newcommand{\Prob}{\mathbb{P}} % Probability
\newcommand{\dist}{\textnormal{dist}} % dist ...
\newcommand{\supp}{\textnormal{supp}} % supp ...
\newcommand{\essinf}{\textnormal{essinf}} % essinf ...
\newcommand{\tr}{\textnormal{tr}} % Trace ...
\renewcommand{\phi}{\varphi}
\newcommand{\cA}{{\mathcal A}}
\newcommand{\cC}{{\mathcal C}}
\newcommand{\Psidel}{\Psi(-\Delta)}
\newcommand{\eps}{\varepsilon}
\newcommand{\Ex}{\mathbb{E}}
\renewcommand{\tr}{\textnormal{Tr}} %Trace
\begin{document}

\title[Overdetermined problems for nonlocal operators]{On overdetermined Problems for a general class of nonlocal operators}

\author{Anup Biswas}
\address{ Department of Mathematics, Indian Institute of Science Education and Research, Dr. Homi Bhabha Road, Pashan, Pune 411008, India}
\email{anup@iiserpune.ac.in}

\author{Sven Jarohs}
\address{Institut f\"ur Mathematik, Goethe-Universit\"at, Frankfurt, Robert-Mayer-Stra\ss e 10, D-60054 Frankfurt, Germany}
\email{jarohs@math.uni-frankfurt.de}

%%%%%%%%%%%%%%%%%%%%%%%%%%%%%%%%%%%%%%%%%%%%%%%%%%%%%%%%%%%%%%%%%%%%%%%%%%%%%%%%
\begin{abstract}
We study the overdetermined problem for a large family of non-local operators given by generators of subordinate Brownian motions.
In particular, this family includes the fractional Laplacian, relativistic stable operators etc.
We consider these problems in bounded domains, exterior domains, and in annular domains and we show that under suitable conditions, the domains
and solutions are both radially symmetric. Our method uses both analytic and probabilistic tools.
\end{abstract}

\keywords{Viscosity solutions, moving plane, subordinate Brownian motions, Hopf's Lemma, rigidity result}

\subjclass[2010]{35N25, 35B50}

\maketitle

\section{Introduction}
In his celebrated work \cite{S71}, Serrin solved the following overdetermined problem: Given a bounded $\cC^2$ domain $D$, if there exists 
a positive solution $u$ to
\begin{equation}\label{S-OD}
-\Delta u =1 \quad \text{in} \; D,\quad u=0\quad \text{on}\; \partial D, \quad \partial_\eta u = c\in\R \quad \text{on}\; \partial D,
\end{equation}
then $D$ is necessarily a ball. In the above, $\partial_\eta$ denotes the Neumann derivative along the exterior normal $\eta$. A very large number of extensions of Serrin’s result can be found in the literature and it is virtually impossible to provide a complete list of bibliography. To cite a few we refer to
\cite{AM11,BD13,LS07,FK08,CS09b,FV10a,FV10b,SS15}. Overdetermined
problems for the Laplacian operator in exterior domains are first studied by Reichel \cite{R97}. In this
work it was established that 
if $D$ and $\R^d\setminus \bar{D}$ are connected, $D$ is a bounded $\cC^2$ domain, and there exists a solution to
\begin{align*}
-\Delta u &= f(u) \quad \text{in}\; \R^d\setminus \bar{D}, \quad u\to 0, \quad \text{as}\; |x|\to\infty,
\\
u&= A\quad \text{on}\; \partial D, \quad \partial_n u=\mathrm{constant}\leq 0 \quad \text{on}\; \partial D,
\quad 0\leq u< A \quad \text{in}\; \R^d\setminus \bar{D},
\end{align*}
where $f$ is Lipschitz in $[0, A]$ and non-increasing close to $0$, then $D$ has to be a ball, and $u$ is radially symmetric and radially decreasing.
This result is further extended to quasi-linear operators in \cite{R96, R97}. Overdetermined problems for annular domains are considered in \cite{A92,P90,R95}.
The key ingredients in all the above works are a boundary point lemma (or Hopf's lemma) and the moving plane method. Very recently, overdetermined problems for the fractional
Laplacian have been studied. It should be kept in mind that for the $\alpha$-fractional Laplacian operator the 
Neumann derivative in \eqref{S-OD} does not exist due to the boundary behavior of solutions, but can be replaced by
$$(\partial_\eta)_\alpha u = \lim_{t\to 0+}\frac{u(x-t \eta)}{t^\alpha}\quad \text{for}\; x\in \partial D.$$
Fall and the second named author \cite{FJ13} (see also \cite{DG13} for $d=2, \alpha=\frac{1}{2}$) study the overdetermined problem for the fractional Laplacian
in bounded domains whereas Soave and Valdinoci \cite{SV14} consider the problem in exterior and annulus domains. Let us also mention \cite{GS16} where an overdetermined problem for the fractional Laplacian
is studied with $(\partial_n)_\alpha u$ being given by a suitable increasing function on the boundary.

In the present article we generalize the above results to a large family of isotropic nonlocal
operators. More precisely, these operators are obtained as the generator of subordinate Brownian motions.
For instance, when the subordinator is a $\alpha$-stable process we get the $\alpha$-fractional
Laplacian as the generator of the corresponding subordinate Brownian motion. We refer to 
Section~\ref{Sec-SBM} for more details. In this article, we denote these operators by $\Psidel$,
where $\Psi$ is a Bernstein function vanishing at $0$, and given by
$$\Psidel f(x) = \int_{\R^d} (f(x)-f(x+z)+1_{\{|z|\}\leq 1\}} z\cdot \nabla f(x)) j(|z|) dz,
\quad f\in\cC^2_b(\R^d),$$
where $j$ is a suitable nonlocal kernel (see \eqref{E2.3}). For $\Psi(t)=t^{\alpha}, \alpha\in (0, 1)$,
we have $j(r)=r^{-d-2\alpha}$ for $r>0$. Note that unlike the fractional Laplacian, these operators need
not have a global scaling property. It turns out that if $\Psi$
has certain \textit{scaling properties} at $0$ and $\infty$ then one can obtain the heat kernel
estimates for the associated operator and therefore, a PDE analysis is possible in many cases.
Interested readers may consult \cite{BGR14b,KSV} for more details. As mentioned above, one of the key steps 
in studying overdetermined problems for $\Psidel$ is to have Hopf-type lemmas and narrow
domain maximum principles. The maximum principle may be obtained as consequence of heat-kernel
estimates (see \cite{BL17} where the authors consider semigroup solutions). Using the 
observation that any viscosity solution can be represented as a semigroup solution we obtain
the narrow domain maximum principle (see Theorem~\ref{thm:anti-mp}). Recently, the first named author and L\H{o}rinczi \cite{BL19}
establish a Hopf's lemma for $\Psidel$. For this they use the sharp boundary behavior for Dirichlet solutions of $\Psidel$ obtained in \cite{KKLL18}. It turns out that the 
renewal function $V$ corresponding to a one dimensional L\'evy process associated to the one generated by $\Psidel$ is related to the
boundary behavior of the Dirichlet solution. This can be heuristically seen as follows: for the
domain $D$ if we consider the Dirichlet problem $\Psidel v=1$ in $D$ with vanishing exterior condition,
then the unique solution is given by the mean exit time of the L\'evy process from $D$. Therefore, 
applying \cite[Theorem~4.6]{BGR15}, it follows that $v\approx V(\delta_D)$ where $\delta_D$ denotes the distance function from the boundary $\partial D$.
 We draw our inspiration from these results to find
a Hopf's lemma for anti-symmetric supersolutions (see Theorem~\ref{thm:anti-mp2}) as also a corner point version of Hopf's Lemma (see Lemma \ref{lemma:cornerpoint-hopf}). These ingredients
play a key role in our overdetermined problems and the application of the moving plane method. We study the overdetermined problems in bounded
domains (Theorem~\ref{thm2.1} and ~\ref{thm2.2}), exterior domains (Theorem~\ref{thm2.3} and ~\ref{thm2.3a}), and in annuli domains (Theorem~\ref{thm2.3b}).

\subsection{Subordinate Brownian motions}\label{Sec-SBM}
The class of non-local operators we would be interested in are generators of a large family of L\'evy processes,
known as subordinate Brownian motions. These processes are obtained by a time change of a Brownian motion  by independent 
subordinators.
In this section we briefly recall the essentials of the subordinate process which will be particularly used in this article.

A Bernstein function is a non-negative completely monotone function, i.e., an element of the set
$$
\mathcal B = \left\{f \in \cC^\infty((0,\infty)): \, f \geq 0 \;\; \mbox{and} \:\; (-1)^n\frac{d^n f}{dx^n} \leq 0,
\; \mbox{for all $n \in \mathbb N$}\right\}.
$$
In particular, Bernstein functions are increasing and concave. We will consider the following subset
$$
{\mathcal B}_0 = \left\{f \in \mathcal B: \, \lim_{x\downarrow 0} f(x) = 0 \right\}.
$$
 For a detailed discussion of Bernstein functions we refer to the monograph \cite{SSV}.
Bernstein functions are closely related to subordinators. Recall that a subordinator $\{S_t\}_{t\geq 0}$
is a one-dimensional, non-decreasing L\'evy
process defined
 on some
probability space $(\Omega_S, {\mathcal F}_S, \mathbb P_S)$ .
 The Laplace transform of a subordinator is given by a
Bernstein function, i.e.,
\begin{equation*}
\label{lapla}
\Ex_{\mathbb P_S} [e^{-xS_t}] = e^{-t\Psi(x)}, \quad t, x \geq 0,
\end{equation*}
 where $\Psi \in {\mathcal B}_0$. In particular, there is a bijection between the set of subordinators on a given
probability space and Bernstein functions with vanishing right limits at zero.

Let $ B$ be an $\R^d$-valued Brownian motion on the Wiener space $(\Omega_W,{\mathcal F}_W, \mathbb P_W)$, running twice
as fast as standard $d$-dimensional Brownian motion, and let $ {S}$ be an independent subordinator with characteristic
exponent $\Psi$. The random process
$$
\Omega_W \times \Omega_S \ni (\omega_1,\omega_2) \mapsto B_{S_t(\omega_2)}(\omega_1) \in \R^d
$$
is called subordinate Brownian motion under $ {S}$. For simplicity, we will denote a subordinate Brownian motion
by $ \{X_t\}_{t\geq 0}$, its probability measure for the process starting at $x \in \R^d$ by $\mathbb P_x$, and expectation with respect
to this measure by $\Ex_x$. Note that the characteristic exponent of a pure jump process $\{X_t\}_{t\geq 0}$ (i.e., with $b=0$) is given
by
\begin{equation*}%\label{E2.4}
\Psi(|z|^2)= \int_{\R^d\setminus\{0\}} (1-\cos(y\cdot z)) j(|y|) \, d{y},
\end{equation*}
where the L\'evy measure of $\{X_t\}_{t\geq 0}$ has a density $y\mapsto j(|y|)$, $j:(0, \infty)\to (0, \infty)$, with respect to
the Lebesgue measure, given by
\begin{equation}\label{E2.3}
j(r) = \int_0^\infty (4\pi t)^{-d/2} e^{-\frac{r^2}{4t}} \, \mathfrak m (d{t}),
\end{equation}
where $\mathfrak m$ is the unique measure on $(0, \infty)$ satisfying
$$\Psi(\lambda)=\int_{(0, \infty)} (1-e^{-\lambda t}) \mathfrak m (d{t}).$$
In this article we impose the following \textit{weak scaling condition} on the subordinators.
\begin{equation}\label{assumption1}
\text{There are $0<a_1\leq a_2<1\leq b_1$ such that}\quad \frac{1}{b_1}\Big(\frac{R}{r}\Big)^{a_1}\leq \frac{\Psi(R)}{\Psi(r)}\leq b_1\Big(\frac{R}{r}\Big)^{a_2}\quad\text{for $1\leq r\leq R$},
\end{equation}
and,
\begin{equation}\label{assumption2}
\text{there is $b_2>1$ such that}\quad j(r)\leq b_2\,j(r+1)\quad\text{for $r\geq 1$.}
\end{equation}
There is large family of subordinators that satisfy \eqref{assumption1} (see \cite{BL17,KKLL18}).
Moreover, any complete Bernstein function satisfying \eqref{assumption1} also satisfies 
\eqref{assumption2} \cite[Theorem~13.3.5]{KSV}.

For some of our proofs below we  use some information on the normalized ascending ladder-height process of
$ \{X^1_t\}_{t\geq 0}$, where $X^{1}_t$ denotes the first coordinate of $X_t$. Recall that the ascending ladder-height
process of a L\'evy process $ Z$ is the process of the right inverse $(Z_{L^{-1}_t})_{t\geq 0}$, where $L_t$
is the local time of $Z_t$ reflected at its supremum (for details and further information we refer to \cite[Chapter 6]{B96}).
Also, we note that the ladder-height process of $X^1$ is a subordinator with Laplace exponent
$$
\tilde\Psi(x)=\exp\left(\frac{1}{\pi}\int_0^\infty \frac{\log \Psi(xy)}{1+y^2}\, d{y}\right), \quad x \geq 0.
$$
Consider the potential measure $V(x)$ of this process on the half-line $(-\infty, x)$. Its Laplace transform is
given by
$$
\int_0^\infty V(x) e^{-sx}\, d{x}= \frac{1}{s\tilde\Psi(s)}, \quad s > 0.
$$
It is also known that $V(x)=0$ for $x\leq 0$, the function $V$ is continuous and strictly increasing in $(0, \infty)$
with $V(\infty)=\infty$ (see \cite{F74} for more details). As shown in \cite[Lemma~1.2]{BGR14} and \cite[Corollary~3]{BGR14b},
there exists a constant $C = C(d)$ such that
\begin{equation*}%\label{E1.5}
\frac{1}{C}\,{\Psi(1/r^2)}\leq \frac{1}{V^2(r)}\leq C\, {\Psi(1/r^2)}, \quad r>0.
\end{equation*}
Also, using \cite[see expression (15)]{BGR14b} we obtain
\begin{equation}\label{J-V}
j(r) \leq C \frac{\Psi(r^{-2})}{r^d}\quad r>0,
\end{equation}
for some constant $C>1$. 
We define the operator
\begin{align*}
-\Psidel f(x) &= \int_{\R^d} (f(x+z)-f(x)-1_{\{|z|\}\leq 1\}} z\cdot \nabla f(x)) j(|z|) dz
\\
&=\frac{1}{2} \int_{\R^d} (f(x+z)+f(x-z)-2 f(x)) j(|z|) dz,
\end{align*}
for $f\in\cC^2_{b}(\R^d)$, by functional calculus.
 The operator $-\Psidel$ is the Markov generator of
subordinate Brownian motion $\{X_t\}_{t\geq 0}$ corresponding to the subordinator $S$, uniquely determined by $\Psi$.

%%%%%%%%%%%%%%%%%%%%%%%%%%%%%%%%%%%%%%%%%%%%%%%%%%%%%%%%%%%%%%%%%%%%%%%%%%%%%%%%%%%%%%%%%%%%%%%%%%%%%%%%

\section{Main results}\label{S-main}

Let $D\subset \R^d, d\geq 2,$ be a bounded domain with $\cC^2$ boundary. Now given any continuous 
function $f$ let us consider the viscosity solution $u\in\cC(\bar D)$ of
\begin{equation*}%\label{E2.1}
\Psidel u = f\quad \text{in}\; D, \quad \text{and,}\quad u=0\quad \text{in}\; D^c.
\end{equation*}
By \cite[Theorem~2.1]{KKLL18} and \eqref{assumption1} we know that $u$ is H\"{o}lder continuous in $\R^d$. Furthermore,
the function $u/V(\delta_D)$ is H\"{o}lder continuous in $D$ \cite[Theorem~2.2]{KKLL18} where
$\delta_D(x)=\inf d(x, D^c)$ is the distance function from the boundary. Thus, we can define the
\text{trace} of $u/V(\delta_D)$ on the boundary $\partial D$ as
\begin{equation}\label{def:trace}
\tr_V(u)(x)=\lim_{D\ni z\to x} \frac{u(x)}{V(\delta_D(z))}, \quad x\in \partial D.
\end{equation}
This trace operator plays an important role in the study of overdetermined problems. 

Let $B_r$ denote the ball of radius $r$ around $0$. By $\tau_r$ we denote the exit time of $X$
from $B_r$ i.e.
$$\tau_r=\inf\{t>0\; :\; X_t\notin B_r\}.$$
By \cite[Section 4.2]{BL19} the function $u_r(x)=\Ex_x[\tau_r]$ is a solution
to the \textit{overdetermined} problem
\begin{equation}\label{eq:ball}
\left\{\begin{aligned}
\Psi(-\Delta)u_r &=1&&\text{in $B_r(0)$;}\\
u_r &=0 &&\text{in $\R^d\setminus B_r(0)$;}\\
 \tr_V(u_r)&={\rm constant}=H_r &&\text{on $\partial B_r(0)$.} 
\end{aligned}\right.
\end{equation}
Furthermore, the function $r\mapsto H_r$ is positive and strictly increasing in $(0, \infty)$ \cite[Lemma~4.1]{BL19}.
Here and in what follows, by a solution we always mean a viscosity solution.

The goal of this work is to show that indeed for an open bounded set $D$ the \textit{overdetermined problem}
\begin{equation}\label{eq:overdetermined}
\left\{\begin{aligned}
\Psi(-\Delta)u&=1&&\text{in $D$;}\\
u&=0 &&\text{in $\R^d\setminus D$,} 
\end{aligned}\right.
\end{equation}
with $\tr_V(u)={\rm constant}$ on $\partial D$ has a solution if and only if $D=B_r(0)$, that is, we have
\begin{theorem}\label{thm2.1}
Let $D$ be an open bounded set with $\cC^2$ boundary and $d\geq 2$. Assume there is a solution $u$ of \eqref{eq:overdetermined} satisfying for some fixed $c\in \R$
\begin{equation}\label{corrected assumption}
\frac{u}{V(\delta_D)}\in C^2(\overline{D})\quad\text{and}\quad \tr_V(u)=c \quad\text{ on $\partial D$.}
\end{equation}
 Then, up to translation, $D=B_r(0)$ for some $r>0$ and $u=u_r$ given by \eqref{eq:ball}.
\end{theorem}

\begin{remark}
We emphasize that connectedness of $D$ is not assumed a priori. In fact, due to the nonlocal character of $\Psi(-\Delta)$ the connectedness follows a posteriori.
\end{remark}

Theorem \ref{thm2.1} is indeed a special case of a more general result concerning semilinear problems.
In particular, we show

\begin{theorem}\label{thm2.2}
Let $D\subset\R^d, d\geq 2,$ be an open bounded set with $\cC^2$ boundary. 
Let $f\in \cC^{0,1}(\R)$ and assume there is a nonnegative nontrivial bounded solution of
\begin{equation*}%\label{eq:overdetermined2}
\left\{\begin{aligned}
\Psi(-\Delta)u&=f(u)&&\text{in $D$,}\\
u&=0 &&\text{in $\R^d\setminus D$,} 
\end{aligned}\right.
\end{equation*}
which satisfies for some fixed $c\geq0$
$$
	\frac{u}{V(\delta_D)}\in C^2(\overline{D})\quad\text{and}\quad \tr_V(u)=c \quad\text{ on $\partial D$.}
$$
 Then, up to translation, $D=B_r(0)$ for some $r>0$, $u>0$ in $D$, and $u$ is radially symmetric and strictly decreasing in the radial direction.
\end{theorem}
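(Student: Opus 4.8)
The plan is to run the moving plane method in the spirit of Serrin's proof, adapted to the nonlocal setting via the Hopf-type lemmas for anti-symmetric supersolutions and the narrow-domain maximum principle stated earlier. Fix an arbitrary direction, say $e_1$, and for $\lambda\in\R$ let $H_\lambda=\{x:x_1<\lambda\}$, let $\Sigma_\lambda=D\cap H_\lambda$, and let $x^\lambda=(2\lambda-x_1,x_2,\dots,x_d)$ be the reflection of $x$ across the hyperplane $T_\lambda=\partial H_\lambda$. Start the plane at $\lambda=-\infty$ and increase it; let $\lambda_0$ be the critical value at which the reflected cap $\Sigma_\lambda'=\{x^\lambda:x\in\Sigma_\lambda\}$ first touches $\partial D$ from inside or $T_{\lambda_0}$ becomes orthogonal to $\partial D$ at some boundary point. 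For $\lambda\le\lambda_0$ set $w_\lambda(x)=u(x)-u(x^\lambda)$ on $\Sigma_\lambda$; because $\Psi(-\Delta)$ commutes with reflections and $u=0$ in $D^c$, the nonlocal structure yields, on $\Sigma_\lambda$,
\begin{equation*}
\Psi(-\Delta)w_\lambda = f(u(x))-f(u(x^\lambda)) = c_\lambda(x)\,w_\lambda(x)
\end{equation*}
for a bounded function $c_\lambda$ coming from the Lipschitz bound on $f$, while $w_\lambda\le 0$ on $H_\lambda\setminus\Sigma_\lambda$ (using $u\ge 0$ and $u=0$ outside $D$) and $w_\lambda$ is anti-symmetric with respect to $T_\lambda$ on all of $\R^d$. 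One checks $w_\lambda\ge 0$ on $\partial\Sigma_\lambda$ initially, so the narrow-domain maximum principle (Theorem~\ref{thm:anti-mp}) applied to caps of small width gives $w_\lambda\ge 0$ in $\Sigma_\lambda$ for $\lambda$ slightly above $-\infty$.

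Next I would push the plane: let $\lambda^*=\sup\{\lambda\le\lambda_0: w_\mu\ge 0 \text{ in }\Sigma_\mu \text{ for all }\mu\le\lambda\}$. By continuity $w_{\lambda^*}\ge 0$ in $\Sigma_{\lambda^*}$, and by the strong maximum principle for $\Psi(-\Delta)$ either $w_{\lambda^*}\equiv 0$ or $w_{\lambda^*}>0$ in $\Sigma_{\lambda^*}$; note $w_{\lambda^*}\not\equiv 0$ would be ruled out unless $\lambda^*=\lambda_0$ because otherwise the cap could still be enlarged. Suppose for contradiction $\lambda^*<\lambda_0$, so $w_{\lambda^*}>0$ in $\Sigma_{\lambda^*}$. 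Then I want to derive that $w_\lambda\ge 0$ persists for $\lambda$ slightly larger than $\lambda^*$, contradicting maximality: decompose the enlarged cap $\Sigma_\lambda$ into a compact part inside $\Sigma_{\lambda^*}$, where positivity of $w_{\lambda^*}$ plus continuity keeps $w_\lambda$ positive, and a thin boundary layer near $\partial D$ and near $T_\lambda$, where the narrow-domain maximum principle (Theorem~\ref{thm:anti-mp}) again applies. This is the standard Serrin/Berestycki--Nirenberg continuation argument; the nonlocal subtlety is that the "outside" values of $u$ enter the equation, but since $u=0$ in $D^c$ and reflection maps $D^c\cap H_\lambda$ into $\Sigma_\lambda'\subset D$, the sign $w_\lambda\le 0$ outside $\Sigma_\lambda$ is exactly what is needed for the maximum principle to close.

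It follows that $\lambda^*=\lambda_0$ and $w_{\lambda_0}\ge 0$ in $\Sigma_{\lambda_0}$; running the plane from $+\infty$ gives the reverse, so $u$ is symmetric across $T_{\lambda_0}$ and $D$ is symmetric about $T_{\lambda_0}$ in the direction $e_1$ --- \emph{unless} the critical position $\lambda_0$ was reached only because $T_{\lambda_0}$ became orthogonal to $\partial D$ at a point $p$, i.e. an "$\eta\cdot e_1=0$" boundary touching, without internal tangency. This is exactly the case the overdetermined condition is meant to exclude, and here is where the corner-point Hopf lemma (Lemma~\ref{lemma:cornerpoint-hopf}) and the Hopf lemma for anti-symmetric supersolutions (Theorem~\ref{thm:anti-mp2}) come in: if $w_{\lambda_0}>0$ in $\Sigma_{\lambda_0}$ but vanishes along with enough derivatives at the boundary point $p$ (forced by $\tr_V(u)=c$ being the \emph{same} constant on both sides of $T_{\lambda_0}$, so $\tr_V(w_{\lambda_0})(p)=0$), the corner-point Hopf lemma produces a contradiction with the sign of the appropriate directional limit of $w_{\lambda_0}$ at $p$. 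Hence in every direction $D$ is symmetric and $u$ is symmetric and monotone in the half on which $w>0$; since the direction was arbitrary, $D$ is a ball $B_r(z_0)$ and, after translating $z_0$ to the origin, $u$ is radially symmetric and radially strictly decreasing, with $u>0$ in $D$ by the strong maximum principle. I expect the main obstacle to be the orthogonal-touching case: making rigorous that the constancy of $\tr_V(u)$ translates into the vanishing of the correct "normalized Neumann-type" quantity of $w_{\lambda_0}$ at the corner point $p$, and then invoking Lemma~\ref{lemma:cornerpoint-hopf} with the precise boundary behavior $u\approx V(\delta_D)$, since the geometry at $p$ is that of a wedge rather than a smooth boundary point and the renewal function $V$ replaces the classical distance weight.
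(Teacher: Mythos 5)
Your skeleton matches the paper's: moving planes, the narrow-domain maximum principle for anti-symmetric supersolutions (Theorem \ref{thm:anti-mp}) to start and to continue past the critical value, and the pair Lemma \ref{lemma:diagonal-decay}/Lemma \ref{lemma:cornerpoint-hopf} at an orthogonal touching point. But there is a genuine gap at the critical position: you invoke the overdetermined condition $\tr_V(u)=c$ only in the orthogonality case, and for the internal-tangency case you claim symmetry follows because ``running the plane from $+\infty$ gives the reverse.'' That step fails: the critical plane coming from the opposite direction is in general a \emph{different} hyperplane, so it yields no inequality at $T_{\lambda_0}$; for a generic non-radial domain the plane simply stops at an interior tangency and no symmetry follows. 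This is precisely where the paper uses the anti-symmetric Hopf lemma, Theorem \ref{thm:anti-mp2}: at a tangency point $p_0\in\partial D\cap\partial D_{\lambda_0}\setminus T_{\lambda_0,e}$ one has $v(p_0)=0$ and $\tr_V(v)(p_0)=\tr_V(u)(p_0)-\tr_V(u\circ R_{\lambda_0,e})(p_0)=c-c=0$, while $v>0$ in $D_{\lambda_0}$ would force $\tr_V(v)(p_0)>0$; hence $v\equiv 0$. Without using the trace condition in Situation 1 the argument does not close.

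Two further points. First, in the continuation argument you dismiss the alternative $w_{\lambda^*}\equiv 0$ with the remark that it ``would be ruled out \ldots because otherwise the cap could still be enlarged''; that is not an argument. Since $f$ is an arbitrary Lipschitz nonlinearity and $u$ is only assumed nonnegative and nontrivial, symmetry of $u$ about an interior plane before touching must be excluded explicitly: the paper observes that such symmetry forces $u=0$ on the open part of $D$ lying beyond the reflected cap, and then runs a second moving plane from the opposite direction to conclude $u\equiv 0$, contradicting nontriviality. Second, your sign bookkeeping is inconsistent: with $w_\lambda=u-u\circ R_\lambda$ on the passed cap $\Sigma_\lambda$, one has $w_\lambda\le 0$ on $H_\lambda\setminus\Sigma_\lambda$ and on $\partial\Sigma_\lambda\cap\partial D$, and the correct conclusion is $w_\lambda\le 0$ in $\Sigma_\lambda$ (equivalently $v_\lambda\ge 0$ on the reflected cap, which is where the paper works and where the hypotheses of Theorem \ref{thm:anti-mp} are actually satisfied); as written, the maximum principle cannot be applied in the form you state. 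This is fixable, and likewise your closing claim that $u>0$ in $D$ ``by the strong maximum principle'' needs the strict radial monotonicity (as in the paper) rather than a direct maximum principle, but combined with the first issue the proposal as it stands does not prove the theorem.
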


Our next result concerns an overdetermined problem in the complement of a bounded set in the spirit of \cite{SV14} (for the local case, we also refer to \cite{R97}). Note that here, we assume $u$ to be a positive constant on this compact set. Hence the trace defined in \eqref{def:trace} is adjusted by this constant.

\begin{theorem}\label{thm2.3}
	Let $G_1,\ldots,G_n\subset \R^d, d\geq 2,$ be a family of compact connected sets with $\cC^2$ boundary such that $G_i\cap G_j=\emptyset$ for $i\neq j$ and let $G:=\bigcup_{k=1}^n G_k$. Let $A>0$ and $f\in C^{0,1}([0,A])$ be nonincreasing for small arguments. Assume that for some given $A_1,\ldots,A_n\leq 0$ there is a solution $u\in C^{\beta}(\R^d)$ for some $\beta\in(0,1)$ of the problem 
	\begin{equation*}%\label{eq:overdetermined3}
	\left\{\begin{aligned}
	\Psi(-\Delta)u&=f(u)&&\text{in $\R^d\setminus G$,}\\
	u&=A &&\text{in $G$,}\\
	\tr_V(u-A)&=A_k && \text{on $\partial G_k$},
	\end{aligned}\right.
	\end{equation*}
	with $0\leq u< A$ on $\R^d\setminus G$, $\lim\limits_{|x|\to\infty} u(x)=0$, and
	$$
	\frac{u}{V(\delta_G)}\in C^2(\overline{G})
	$$
	Then $G$ is a ball and $u$ is radially symmetric and strictly decreasing in the radial direction with respect to the center of $G$.
\end{theorem}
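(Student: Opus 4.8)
The plan is to run the method of moving planes in the exterior domain $\R^d\setminus G$, exactly paralleling the classical argument of Reichel and its nonlocal analogue of Soave--Valdinoci, but using the nonlocal Hopf-type lemmas (Theorem~\ref{thm:anti-mp2}, Lemma~\ref{lemma:cornerpoint-hopf}) and the narrow-domain maximum principle (Theorem~\ref{thm:anti-mp}) established earlier. Fix a direction, say $e_1$, and for $\lambda\in\R$ let $T_\lambda=\{x_1=\lambda\}$, $\Sigma_\lambda=\{x_1>\lambda\}$, and for a point $x$ write $x^\lambda$ for its reflection across $T_\lambda$. Set $u_\lambda(x)=u(x^\lambda)$ and $w_\lambda=u_\lambda-u$ on $\Sigma_\lambda$. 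Because $f$ is Lipschitz, $w_\lambda$ solves a linear nonlocal equation $\Psi(-\Delta)w_\lambda = c_\lambda(x)w_\lambda$ in the reflected-and-truncated domain, where $c_\lambda$ is bounded; moreover $w_\lambda$ is antisymmetric with respect to $T_\lambda$ away from $G$, which is the structure required by the anti-symmetric maximum principle and Hopf lemma. Since $G$ is compact and $u\to 0$ at infinity with $0\le u<A$ in the complement, for $\lambda$ large and positive the reflected cap $\Sigma_\lambda$ is disjoint from $G$, and on it $w_\lambda\ge 0$: this is the starting point of the procedure. The monotonicity hypothesis on $f$ near $0$ is used precisely here, to get the maximum principle to start the sliding (the decay of $u$ forces $u$ small on the far cap, so only the behavior of $f$ near $0$ matters).

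Next I would define $\lambda_0=\inf\{\lambda : w_\mu\ge 0 \text{ in }\Sigma_\mu \text{ and } \Sigma_\mu\cap G=\emptyset \text{ for all }\mu>\lambda\}$ and show $\lambda_0$ is finite by the preceding step. The goal is to show that $T_{\lambda_0}$ is a hyperplane of symmetry. There are two ways the sliding can stop: either (i) $T_{\lambda_0}$ becomes internally tangent to $\partial G$, or (ii) $\Sigma_{\lambda_0}\cap G=\emptyset$ but $w_{\lambda_0}$ touches $0$ at an interior point of $\Sigma_{\lambda_0}$, or the normal derivative (in the $V$-renormalized sense) of $w_{\lambda_0}$ vanishes at a boundary point of $G^{\lambda_0}$. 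In either case the strong maximum principle / Hopf lemma for anti-symmetric supersolutions (Theorem~\ref{thm:anti-mp2}) together with the corner-point version (Lemma~\ref{lemma:cornerpoint-hopf}) — the latter needed when the tangency occurs at a point where $\partial G$ and $T_{\lambda_0}$ meet — forces $w_{\lambda_0}\equiv 0$, hence $u$ is symmetric about $T_{\lambda_0}$ and $G$ is symmetric about $T_{\lambda_0}$ too. Crucially, one must also rule out that the procedure stops "too early" because $w_{\lambda_0}>0$ strictly everywhere but cannot be continued: the continuity in $\lambda$ of the relevant quantities, combined with the fact that $w_\lambda$ satisfies a linear equation to which the narrow-domain maximum principle applies once the cap near a would-be new contact point is thin, gives the contradiction — this is the standard "the set of good $\lambda$ is open and closed" type argument adapted to the nonlocal setting, where the narrow-domain principle replaces the classical maximum principle on thin domains.

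Running this in every direction $e$ shows $G$ and $u$ are symmetric with respect to a hyperplane orthogonal to each direction. To conclude that $G$ is a ball, I would argue as follows: the exterior condition $\tr_V(u-A)=A_k$ being constant on each component $\partial G_k$ is what pins down the tangency structure so that the reflected cap cannot slide past a component without a contact point forming; combined with symmetry in all directions, the center of symmetry is unique and $G$ must be a single ball centered there (the components must in fact coincide, so $n=1$, since a union of disjoint compact connected sets invariant under reflection through every hyperplane through a fixed point is a single ball — otherwise take a direction separating two components and the moving-plane procedure in that direction gives a contradiction at the first tangency). Radial monotonicity of $u$ then follows because, for $\lambda$ between $\lambda_0=0$ (after translating the center to the origin) and $+\infty$, strict positivity $w_\lambda>0$ in $\Sigma_\lambda$ holds by the strong maximum principle (the alternative $w_\lambda\equiv 0$ for some $\lambda>0$ would force $G$ symmetric about two distinct parallel hyperplanes, impossible for a bounded set), which is exactly monotone decay in the $e_1$-direction on the half $\{x_1>0\}$; letting $e_1$ vary gives strict radial decrease.

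The main obstacle I anticipate is handling the \emph{start of the procedure and the stopping analysis at a boundary contact point} in the nonlocal, renormalized-trace framework: because solutions vanish like $V(\delta_G)$ rather than linearly, the "Neumann data" is the $V$-renormalized trace, and one must check that the Hopf lemma of Theorem~\ref{thm:anti-mp2} and its corner-point refinement Lemma~\ref{lemma:cornerpoint-hopf} are strong enough to (a) propagate strict positivity of $w_\lambda$ from the interior to a strictly positive renormalized normal derivative at the contact point, and (b) rule out a vanishing renormalized derivative unless $w_{\lambda_0}\equiv 0$. The second subtlety is the genuinely nonlocal bookkeeping: $w_\lambda$ is only antisymmetric \emph{outside} $G$, so the linear equation it satisfies has a nonlocal "source" coming from the values of $u$ inside $G^c\setminus\Sigma_\lambda$ and inside $G$; one must verify this extra term has a favorable sign (it does, because $u_\lambda\le A = u$ on $G$ and, by the running assumption, $u_\lambda\ge u$ on the already-reflected region), so that $w_\lambda$ is a genuine supersolution of the linear nonlocal operator on $\Sigma_\lambda$ with the right sign conditions to invoke the maximum principles.
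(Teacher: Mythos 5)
Your overall moving-plane strategy matches the paper's, but there is a concrete gap at the two places where a maximum principle must be invoked on an \emph{unbounded} region. You propose to start the sliding and to continue past $\lambda^{\ast}$ using the narrow-domain maximum principle, Theorem~\ref{thm:anti-mp}; that theorem is stated (and proved, via the $L^p$-smallness of $\Omega$) only for \emph{bounded} $\Omega$, and in the exterior problem the cap $D_\lambda$ on which the sign of $w_\lambda$ is unknown has infinite measure. Saying that ``the cap near the would-be contact point is thin'' does not fix this: after removing a compact set where $w_\lambda>0$ persists by continuity, the remaining set is still unbounded (in particular the strip near the hyperplane $T_\lambda$ at large distances), so no smallness-of-measure criterion applies. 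This is exactly why the paper proves and uses Lemma~\ref{lemma:mp-unbounded}: an anti-symmetric maximum principle on unbounded $\Omega$ requiring $c\geq 0$ on a set $K$ with $\Omega\setminus K\subset B_R(0)$, a sign condition on $u$ off a $\delta$-strip near $\partial H$, and decay at infinity. Your correct heuristic that ``only the behavior of $f$ near $0$ matters'' (giving $c\geq 0$ where $u$ and $u\circ R_{\lambda,e}$ are small) is precisely the hypothesis of that lemma, but you never formulate or invoke a maximum principle that can exploit it on an unbounded set; as written, both the start of the procedure (the paper's Lemma~\ref{lemma41}) and the continuation argument at $\lambda^{\ast}$ (Lemma~\ref{lemma43}) are unsupported.

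A secondary gap: in Situation 2 (plane orthogonal to $\partial G$ at a contact point), the corner-point Hopf lemma, Lemma~\ref{lemma:cornerpoint-hopf}, by itself only gives the lower bound $w_{\lambda_0}(t\bar\eta)\geq C\,V(t)\,t$; to reach a contradiction one also needs the matching upper bound $w_{\lambda_0}(t\bar\eta)=o(V(t)t)$, which the paper obtains in Lemma~\ref{lemma:diagonal-decay} from the $\cC^2$ boundary, the H\"older continuity of $u/V(\delta)$ near $\partial G$ (justified in the exterior setting by Remark~\ref{R2.2}, using the assumption $u\in\cC^\beta$), and the constancy of $\tr_V(u-A)$ on each $\partial G_k$. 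Your proposal cites only the Hopf half of this pair, and it also mislocates the role of the overdetermined trace condition (it is used at the stopping configurations to force $v_{\lambda_0}\equiv 0$, not to force a contact to form). The remaining structural points (ruling out $v_\lambda\equiv 0$ before the stopping value, symmetry in every direction, connectedness of $G$, strict radial monotonicity) are treated in the paper by Lemmas~\ref{lemma42}--\ref{lemma44} and the argument of Theorem~\ref{thm2.1}, and your sketches of these are in the right spirit.
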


We emphasize that in contrast to the \cite[Theorem 1.3]{SV14}, where the above theorem was stated for the fractional Laplacian, we do not need an additional regularity assumption on $u/(V\circ\delta_{\R^d\setminus G})$. Indeed, the above theorem extends the one from \cite{SV14} and this is mainly due a narrow type maximum principle Lemma \ref{lemma:mp-unbounded}.

\begin{remark}\label{R2.2}
As we have discussed above the trace operator of the boundary is justified from \cite{KKLL18},
but when $D$ is unbounded, some explanation is required to justify the boundary trace $\tr_V(\cdot)$.
To do so consider the exterior domain Dirichlet problem
\begin{equation*}%\label{E2.8}
\Psidel u = f\quad\text{in}\; G^c, \quad \text{and}\quad u=0\quad \text{in} \; G,
\end{equation*}
where $f$ is a bounded continuous function, $G$ is closed bounded with $\cC^2$ boundary, and $u$ is a bounded
viscosity solution. Observe that for any bounded domain $D\subset G^c$, with $\cC^2$ boundary,
we can write (see \cite{BL19} or \cite[Remark~3.2]{BL17})
$$u(x) = \Ex_x[u(X_{\tau_D})] - \Ex_x\left[\int_0^{\tau_D} f(X_s) ds\right],\quad x\in D,$$
where $\tau_D$ denotes the exit time from $D$. This follows from the uniqueness of viscosity solution
\cite[Theorem~3.8]{KKLL18} and the fact that the right hand side function is a viscosity solution
in $D$ with exterior boundary data being $u$. With this representation we can write $u=w_1+w_2$
where
$$w_1(x)=\Ex_x[u(X_{\tau_D})],\quad \text{a harmonic function in}\; D,$$
and,
$$w_2(x)=- \Ex_x\left[\int_0^{\tau_D} f(X_s) ds\right]
\quad \text{satisfying}\quad \Psidel w_2 =f \quad \text{in}\; D, \;\; w_2=0\; \; \text{in}\; D^c.$$
Furthermore, we also check that for any $x\in D$ we have 
\begin{equation}\label{ER2.2A}
\cA w_1(x):=\lim_{t\downarrow 0} \frac{1}{t}\left(\Ex_{x}[w_1(X_t)]-w_1(x)\right)=0.
\end{equation}
Indeed, for any ball $B_r(x)\subset D$ we have
\begin{align*}
\left|\Ex_{x}[w_1(X_t)]-w_1(x)\right|
&= \left|[w_1(X_t)]-\Ex_x[w_1(X_{t\wedge\tau_{B_r(x)}})]\right|
\\
&= \left|\Ex_x[w_1(X_t)1_{\{t> \tau_{B_r(x)} \}}]-\Ex_x[w_1(X_{\tau_{B_r(x)}})1_{\{t> \tau_{B_r(x)} \}} ]\right|
\\
&= \left|\Ex_x[1_{\{t> \tau_{B_r(x)} \}} \Ex_{X_{\tau_{B_r(x)}}}[w_1(X_{t-\tau_{B_r(x)}})]]-\Ex_x[w_1(X_{\tau_{B_r(x)}})1_{\{t> \tau_{B_r(x)} \}} ]\right|,
\end{align*}
where in the first and third line we use the strong Markov property of $X$. Now observe that for any non-negative cut-off function $\zeta$, $0\leq\zeta\leq 1$,
 that vanishes outside a 
compact set, we have $\zeta w$ uniformly continuous in $\R^d$, and therefore,
$$\sup_{x\in \R^d} \Ex_x[\sup_{s\leq t}|(\zeta w_1)(X_s)-\zeta(x)w_1(x)|]\to 0, \quad \text{as}\; t\to 0.$$
Thus using \cite[Corollary~2.8]{BGR15} we get 
$$\frac{1}{t}\left|\Ex_x[1_{\{t> \tau_{B_r(x)} \}} \Ex_{X_{\tau_{B_r(x)}}}[\zeta w_1(X_{t-\tau_{B_r(x)}})]]-\Ex_x[\zeta w_1(X_{\tau_{B_r(x)}})1_{\{t> \tau_{B_r(x)} \}} ]\right|
\to 0, $$ 
as $t\to 0$. Now let $\zeta=1$ in $B_m(0)$ and let $\tau_m$ be the exit time from $B_m(0)$. Then
\begin{align*}
\frac{1}{t} \Ex_x[|(1-\zeta(X_t))w_1(X_t)|1_{\{t> \tau_{B_r(x)} \}}] &\leq \frac{1}{t} \sup |w_1| \Prob_x(\tau_m\leq t)
\\
&\lesssim \sup |w_1|\, \frac{1}{V^2(m)}\to 0, \quad \text{as}\; m\to \infty,
\end{align*}
where the last line follows from \cite[Corollary~2.8]{BGR15}. Again, 
$$|(1-\zeta(X_{\tau_{B_r(x)}}))w_1(X_{\tau_{B_r(x)}})|1_{\{t> \tau_{B_r(x)} \}}\neq 0$$
would imply for some point $s\leq t$ we have $|X_s|> m$, and thus, it is included in $\{\tau_m\leq t\}$. So we can use the above argument to show
$$\frac{1}{t}\Ex_x\left[|(1-\zeta(X_{\tau_{B_r(x)}}))w_1(X_{\tau_{B_r(x)}})|1_{\{t> \tau_{B_r(x)} \}}\right]\to 0, \quad \text{uniformly in}\; t>0,$$
as $m\to\infty$. Thus we have \eqref{ER2.2A}. 

With the above decomposition of $u$ (i.e. $w_1+w_2$ with respect to any sub-domain $D$) one can follow the arguments of \cite{KKLL18} to conclude that
$\tr_V(u)$ exists on $\partial G^c$. Indeed, this will be a consequence of \cite[Lemma~4.10]{KKLL18}. Furthermore, if $u\in\cC^\beta(\R^d)$ then we get
from the arguments of Theorem~2.2 of \cite{KKLL18} that $\frac{u}{V(\delta_{G^c})}$ is H\"{o}lder continuous in a neighborhood of
$\partial G^c$.
\end{remark}

\begin{theorem}\label{thm2.3a}
The conclusion of Theorem \ref{thm2.3} remains true, if $f$ is additionally assumed to be non-increasing in $[0,A]$ but only assuming $0\leq u\leq A$.
\end{theorem}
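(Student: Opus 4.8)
The plan is to deduce Theorem~\ref{thm2.3a} from Theorem~\ref{thm2.3} by improving the hypothesis $0\le u\le A$ to $0\le u<A$ on $\R^d\setminus G$. Indeed, a function non-increasing on all of $[0,A]$ is in particular non-increasing for small arguments, and every other hypothesis coincides with that of Theorem~\ref{thm2.3}; hence, once $u<A$ in $\R^d\setminus G$ is established, Theorem~\ref{thm2.3} applies and yields the conclusion. So the whole argument reduces to the claim that $u(x)<A$ for every $x\in\R^d\setminus G$.

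First I would use the global monotonicity of $f$ together with the decay at infinity to show $f(A)\le0$. Since $0\le u\le A$ on $\R^d$ and $f$ is non-increasing on $[0,A]$, we have $f(u)\ge f(A)$ pointwise. Fix $x$ with $|x|$ large enough that $B_1(x)\subset\R^d\setminus G$; applying the representation formula of Remark~\ref{R2.2} on $D=B_1(x)$ (where $u$ solves $\Psidel u=f(u)$, with its own values as exterior datum), and using $u\ge0$ and $f(u(X_s))\ge f(A)$ for $s<\tau_{B_1(x)}$, we obtain
\[
u(x)=\Ex_x\big[u(X_{\tau_{B_1(x)}})\big]+\Ex_x\Big[\int_0^{\tau_{B_1(x)}}f(u(X_s))\,ds\Big]\ \ge\ f(A)\,\Ex_x\big[\tau_{B_1(x)}\big]\ =\ f(A)\,\kappa ,
\]
where $\kappa:=\Ex_0[\tau_1]\in(0,\infty)$ is independent of $x$ by spatial homogeneity of the L\'evy process $X$. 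Letting $|x|\to\infty$ and using $u(x)\to0$ gives $0\ge f(A)\kappa$, hence $f(A)\le0$.

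Next I would rule out interior contact with the level $A$. Assume, for contradiction, that $u(x_0)=A$ for some $x_0\in\R^d\setminus G$, and set $v:=A-u$, so that $0\le v\le A$ on $\R^d$, $v(x_0)=0$, and (by linearity of $\Psidel$ and the fact that it annihilates constants) $v$ is a viscosity supersolution of $\Psidel v=-f(A-v)$ in $\R^d\setminus G$. The constant function $0$ touches $v$ from below at $x_0$; testing the supersolution inequality at $x_0$ with the function $\psi$ that equals $0$ on a ball $B_\rho(x_0)\subset\R^d\setminus G$ and equals $v$ elsewhere gives $\Psidel\psi(x_0)\ge-f(A-v(x_0))=-f(A)\ge0$, the last inequality by the first step. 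On the other hand, since $\psi(x_0)=0$ and $\psi\ge0$, the symmetric form of the operator gives
\[
\Psidel\psi(x_0)=-\frac12\int_{\{|z|>\rho\}}\big(v(x_0+z)+v(x_0-z)\big)\,j(|z|)\,dz\ \le\ 0 .
\]
Therefore $\Psidel\psi(x_0)=0$, which forces $v(x_0+z)=v(x_0-z)=0$ for almost every $|z|>\rho$; letting $\rho\downarrow0$ and using continuity of $v$ we conclude $v\equiv0$ on $\R^d$, i.e.\ $u\equiv A$, contradicting $\lim_{|x|\to\infty}u(x)=0$ (as $A>0$). Hence $u<A$ in $\R^d\setminus G$, and Theorem~\ref{thm2.3} gives the conclusion.

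The step I expect to be the real obstacle is the first one: the viscosity argument of the second step, on its own, only yields $f(A)\ge0$ and produces no contradiction when $f(A)>0$, so it has to be combined with the bound $f(A)\le0$, and this is exactly the point at which the global (rather than merely local) monotonicity of $f$ is used. A routine but necessary check is that the representation formula and the nonlocal viscosity test are applied only on admissible subdomains (bounded, with $\cC^2$ boundary, contained in $\R^d\setminus G$), which is precisely the framework of Remark~\ref{R2.2} and of the uniqueness result from \cite{KKLL18} invoked there.
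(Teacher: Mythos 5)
Your proposal is correct, but it follows a genuinely different route from the paper. The paper never establishes the strict bound $u<A$ in $\R^d\setminus G$; instead it re-runs the moving plane scheme of Theorem \ref{thm2.3}, replacing Lemma \ref{lemma43} and Lemma \ref{lemma44} by Lemma \ref{lemma43a}: since $f$ is non-increasing on all of $[0,A]$, the linearized coefficient $c$ is nonnegative on the whole of $D_{\lambda}$, so the maximum principle for unbounded sets, Lemma \ref{lemma:mp-unbounded}, applies with $K=\Omega=D_{\lambda}$ and gives $\lambda^{\ast}=\lambda_0$ directly, bypassing the only places where $u<A$ was used. You instead prove the a priori bound $u<A$ on $\R^d\setminus G$ --- first $f(A)\le 0$ via the stochastic representation and the decay at infinity, then a touching-point argument showing that $u(x_0)=A$ at a point of the open set $\R^d\setminus G$ forces $u\equiv A$, contradicting the decay --- and then quote Theorem \ref{thm2.3} as a black box. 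Your reduction is logically sound and more modular (the intermediate rigidity statement has independent interest), while the paper's route stays entirely inside the moving-plane framework and needs no representation formula. Two small points you should tidy up, neither of which is a genuine gap: the formula in Remark \ref{R2.2} is printed with a minus sign in front of the expectation of the integral, but the sign you use (plus) is the one consistent with \eqref{eq:ball} and with \eqref{ET3.1B}, so this is a typo in the paper that you are silently correcting and should flag; and your test function $\psi$ (equal to $0$ in $B_\rho(x_0)$ and to $v$ outside) is discontinuous and touches $v$ only non-strictly, so you should either insert a transition annulus, as the paper does in the proof of Theorem \ref{thm:anti-mp2}, or invoke the standard gluing lemma for viscosity solutions from \cite{CS09} to justify evaluating $\Psidel\psi(x_0)$ classically; with that cosmetic fix the conclusion $v\equiv 0$ and hence $u<A$ outside $G$ stands, and Theorem \ref{thm2.3} yields the claim.
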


In the spirit of the moving plane method, which we use to proof Theorem \ref{thm2.3}, we have the following result

\begin{theorem}\label{thm2.4}
	Let $A>0$ and $f\in C^{0,1}([0,A])$ such that $f$ is non-increasing for small $t$. Then any continuous solution $u$ of the problem 
	\begin{equation*}%\label{eq:overdetermined4}
	\left\{\begin{aligned}
	\Psi(-\Delta)u&=f(u)&&\text{in $\R^d\setminus\{0\}$,}\\
	u(0)&=A &&
	\end{aligned}\right.
	\end{equation*}
	with $0\leq u\leq A$ on $\R^d$ and $\lim\limits_{|x|\to\infty}u(x)=0$ is radially symmetric and strictly decreasing in the radial direction.
\end{theorem}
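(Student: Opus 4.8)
The plan is to carry out the moving plane method for $u$ itself, treating the puncture $\{0\}$ as a degenerate instance of the compact obstacle $G$ in Theorem~\ref{thm2.3}; indeed the argument runs parallel to (and is simpler than) the proofs of Theorems~\ref{thm2.3}--\ref{thm2.3a}. Since $\Psidel$ and all of the hypotheses are invariant under rotations about $0$, it suffices to show that $u$ is symmetric with respect to the hyperplane $\{x_1=0\}$ and strictly decreasing in $x_1$ on $\{x_1>0\}$; applying this to every rotation then gives radial symmetry about $0$ together with strict radial monotonicity. For $\lambda\in\R$ write $T_\lambda=\{x_1=\lambda\}$, $\Sigma_\lambda=\{x_1>\lambda\}$, let $x_\lambda=(2\lambda-x_1,x_2,\ldots,x_d)$ be the reflection of $x$ across $T_\lambda$, and set $w_\lambda(x)=u(x_\lambda)-u(x)$. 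Then $w_\lambda$ is bounded, continuous, vanishes at infinity, is antisymmetric with respect to $T_\lambda$, and $w_\lambda\equiv0$ on $T_\lambda$. As $\Psidel$ commutes with reflections and $u$ solves the equation on $\R^d\setminus\{0\}$, on $\Sigma_\lambda$ with the single point whose reflection is $0$ removed --- this point lies in $\Sigma_\lambda$ precisely when $\lambda>0$, in which case it is $p_\lambda:=(2\lambda,0,\ldots,0)$ --- we have $\Psidel w_\lambda=c_\lambda\,w_\lambda$ with $c_\lambda(x)=\int_0^1 f'(u(x)+t\,w_\lambda(x))\,dt$ and $\|c_\lambda\|_\infty\le\|f'\|_{L^\infty([0,A])}=:L$. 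The key bookkeeping point is that, when $\lambda>0$, $w_\lambda(p_\lambda)=u(0)-u(p_\lambda)=A-u(p_\lambda)\ge0$ automatically, so the exceptional point can never host a negative value of $w_\lambda$ and is harmless in the maximum principle steps.

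Next I would show $w_\lambda\ge0$ in $\Sigma_\lambda$ for $\lambda$ large: fixing $\delta>0$ with $f$ non-increasing on $[0,\delta]$ and $\lambda$ so large that $0\le u<\delta$ on $\Sigma_\lambda$ (possible since $\Sigma_\lambda\subset\{|x|>\lambda\}$), at a hypothetical negative minimum $\hat x$ of $w_\lambda$ one has $u(\hat x_\lambda)<u(\hat x)<\delta$, hence $f(u(\hat x_\lambda))\ge f(u(\hat x))$ and so $\Psidel w_\lambda(\hat x)\ge0$, contradicting the antisymmetric maximum principle (Theorem~\ref{thm:anti-mp} together with Lemma~\ref{lemma:mp-unbounded}). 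I would then set $\bar\lambda=\inf\{\mu:\ w_\lambda\ge0\text{ in }\Sigma_\lambda\text{ for all }\lambda\ge\mu\}$, which is finite: $\bar\lambda<\infty$ by the previous step, and $\bar\lambda>-\infty$ since $w_\lambda\ge0$ in $\Sigma_\lambda$ for all $\lambda\in\R$ would force $x_1\mapsto u(x_1,x')$ to be non-increasing on all of $\R$, hence identically $0$ by the decay at $\pm\infty$, contradicting nontriviality. By continuity of $u$, $w_{\bar\lambda}\ge0$ in $\Sigma_{\bar\lambda}$.

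At the level $\bar\lambda$ I would run the usual dichotomy. If $w_{\bar\lambda}\not\equiv0$, the nonlocal strong maximum principle (at points of $\Sigma_{\bar\lambda}$ where the equation holds) gives $w_{\bar\lambda}>0$ in $\Sigma_{\bar\lambda}$; combined with the antisymmetric Hopf lemma (Theorem~\ref{thm:anti-mp2}) and, at the configuration where $T_{\bar\lambda}$ abuts the puncture, the corner-point Hopf lemma (Lemma~\ref{lemma:cornerpoint-hopf}), one pushes the plane slightly past $\bar\lambda$ --- splitting $\Sigma_{\bar\lambda-\eps}$ into a narrow slab adjacent to $T_{\bar\lambda}$ (Theorem~\ref{thm:anti-mp}), a far region where $u$ is small (Lemma~\ref{lemma:mp-unbounded}), and a compact intermediate region where strict positivity is stable under small perturbations --- contradicting the minimality of $\bar\lambda$. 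Hence $w_{\bar\lambda}\equiv0$, i.e.\ $u$ is symmetric about $T_{\bar\lambda}$. Moreover, for every $\lambda>\bar\lambda$ we have $w_\lambda>0$ in $\Sigma_\lambda$ (otherwise $u$ would be symmetric about a second hyperplane, hence periodic in $x_1$, contradicting the decay); choosing $\lambda=\frac{s+t}{2}>\bar\lambda$ for $\bar\lambda<s<t$ gives $u(s,x')=u((t,x')_\lambda)>u(t,x')$, so $x_1\mapsto u(x_1,x')$ is strictly decreasing on $(\bar\lambda,\infty)$ and, by symmetry about $T_{\bar\lambda}$, strictly increasing on $(-\infty,\bar\lambda)$; thus it has a strict maximum at $x_1=\bar\lambda$ for every $x'$. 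Taking $x'=0$ and using $u(0)=A=\max_{\R^d}u$ forces $\bar\lambda=0$, since $\bar\lambda\neq0$ would give $u(\bar\lambda,0)>u(0)=A$. Applying all of this to every rotation of $e_1$ yields the claim.

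I expect the main obstacle to be the combination of two features. First, the maximum principle has to be run in the unbounded half-space $\Sigma_\lambda$ with a zeroth-order coefficient $c_\lambda$ that is only bounded and not sign-definite, which forces one to pair the decay of $u$ at infinity (to absorb $c_\lambda^+$ far out) with the narrow-slab estimate near $T_\lambda$ --- this is exactly the role of Lemma~\ref{lemma:mp-unbounded}. Second, one must control $w_\lambda$ near the exceptional point $p_\lambda$ and, at the limiting position, the corner-like configuration where $T_0$ passes through the puncture $0$; this is where the observation $w_\lambda(p_\lambda)\ge0$ and the corner-point Hopf lemma (Lemma~\ref{lemma:cornerpoint-hopf}) enter, and ruling out $w_{\bar\lambda}(p_{\bar\lambda})=0$ (equivalently, that $u$ attains $A$ away from $0$) in the dichotomy step is the delicate point. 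Everything else is the routine bookkeeping of the moving plane scheme, inherited from the proof of Theorem~\ref{thm2.3}.
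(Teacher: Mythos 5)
Your overall moving-plane scheme and the tools you invoke (Theorem~\ref{thm:anti-mp}, Theorem~\ref{thm:anti-mp2}, Lemma~\ref{lemma:mp-unbounded}, decay plus monotonicity of $f$ near $0$ to start the planes, and the pinning of the critical plane through $u(0)=A$) match the paper's, but there is a genuine gap at the heart of your argument: the step where you conclude $w_{\bar\lambda}\equiv 0$ by pushing the plane slightly past $\bar\lambda$ even ``at the configuration where $T_{\bar\lambda}$ abuts the puncture'', i.e.\ at $\bar\lambda=0$. For $\lambda<0$ the puncture itself lies in $\Sigma_\lambda$: there the equation for $w_\lambda$ is not available (you only track the exceptional point $p_\lambda$, which is relevant for $\lambda>0$, and miss this second one), and its value $w_\lambda(0)=u(2\lambda e_1)-A\le 0$ has exactly the wrong sign, so it can be absorbed neither into the narrow slab (Theorem~\ref{thm:anti-mp} needs the supersolution property there) nor into the region where nonnegativity is assumed in Lemma~\ref{lemma:mp-unbounded}. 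In fact, for the genuine radial solution one has $w_\lambda\not\ge 0$ in $\Sigma_\lambda$ for every $\lambda<0$, so no correct argument can ``push past'' $0$; your mechanism for obtaining symmetry at the critical plane therefore breaks down precisely in the case you need. Invoking Lemma~\ref{lemma:cornerpoint-hopf} there does not help: that lemma requires the $\cC^2$ corner geometry and, more importantly, only yields the lower bound $w(t\bar\eta)\gtrsim tV(t)$, which produces a contradiction only when paired with the upper bound of Lemma~\ref{lemma:diagonal-decay}; that upper bound comes from the overdetermined condition $\tr_V(u)=c$ on a $\cC^2$ boundary and has no analogue for a puncture with no trace condition. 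A related soft spot: since only $0\le u\le A$ is assumed, $u$ may a priori attain the value $A$ away from $0$, so your unconstrained $\bar\lambda$ could be negative and your dichotomy/stability step does not exclude $w_{\bar\lambda}$ vanishing at the exceptional point — you flag this as ``delicate'' but do not resolve it.

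The paper's proof avoids all of this by never moving the plane past the puncture. One works with the constrained critical value $\lambda^{\ast}=\inf\{\lambda\ge 0:\ w_\mu\ge0 \text{ in }\Sigma_\mu\ \forall\,\mu>\lambda\}$ and runs the analogues of Lemmas~\ref{lemma41}--\ref{lemma44} with $G=\{0\}$ (the exceptional point always lies on the favorable side for $\lambda\ge0$); the pinning argument you propose — strict monotonicity for $\lambda>\lambda^{\ast}$ plus the global maximum $u(0)=A$ — is exactly the content of Lemma~\ref{lemma44} and is used to show $\lambda^{\ast}=0$ for \emph{every} direction $e$, not after a symmetry obtained by crossing $0$. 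The symmetry $w_{0,e}\equiv 0$ then follows without any Hopf-type argument, by combining the one-sided inequalities from the directions $e$ and $-e$ (both critical values equal $0$, and antisymmetry of $w_{0,e}$ turns the two inequalities into an identity), or equivalently by appealing to \cite[Proposition 6.1]{FJ13} as the paper does; strict radial decrease comes from $w_{\lambda,e}>0$ for $\lambda>0$. You should restructure your proof along these lines; as written, the step producing symmetry at the critical plane is not just unproven but unprovable.
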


An immediate consequence gives
\begin{corollary}\label{cor-from-thm2.4}
	Let $f\in C^{0,1}((0,\infty))$ such that $f$ is non-increasing for small $t$. Then any nonnegative continuous solution $u$ of the problem 
	\begin{equation*}%\label{eq:overdetermined4b}
	\Psi(-\Delta)u=f(u)\quad \text{in $\R^d$,}\quad \text{with}\quad \lim\limits_{|x|\to\infty}u(x)=0
	\end{equation*}
	 is radially symmetric up to translation. Moreover, either $u\equiv 0$ or up to translation $u$ is strictly decreasing in the radial direction.
\end{corollary}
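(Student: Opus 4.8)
The plan is to deduce Corollary~\ref{cor-from-thm2.4} directly from Theorem~\ref{thm2.4} by a reduction argument. The key point is that, although Theorem~\ref{thm2.4} requires $f$ to be defined and Lipschitz on a \emph{closed} interval $[0,A]$ and the solution to be bounded with $0\le u\le A$ and to attain the value $A$ at the origin, a nonnegative solution $u$ of $\Psi(-\Delta)u=f(u)$ on all of $\R^d$ with $u(x)\to 0$ at infinity already carries all of this structure once we locate its maximum. So first I would note that, since $u$ is nonnegative, continuous and vanishes at infinity, it attains its maximum at some point $x_0\in\R^d$; set $A:=u(x_0)=\max_{\R^d}u$. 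If $A=0$ then $u\equiv 0$ and we are in the trivial alternative, so assume $A>0$. After the translation $x\mapsto x+x_0$ (which commutes with $\Psi(-\Delta)$ by translation invariance of the L\'evy kernel $j(|\cdot|)$), we may assume $u(0)=A=\max_{\R^d}u$ and $0\le u\le A$ on $\R^d$.

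Next I would arrange the hypotheses on $f$. On the interior $(0,\infty)$ the function $f$ is $C^{0,1}_{\mathrm{loc}}$ and non-increasing near $0$; restricting to $[\,\eta,A\,]$ for any small $\eta>0$ it is globally Lipschitz, and the behavior of $f$ near $A$ is irrelevant because the equation only ever evaluates $f$ at values in the range of $u$, which lies in $[0,A]$. The only genuine issue is the value $f(0)$, which is not a priori defined. Here I would use that $x=0$ is a maximum point: since $u(0)=A\ge u(0+z)$ for all $z$, the representation $-\Psi(-\Delta)u(0)=\tfrac12\int_{\R^d}(u(z)+u(-z)-2u(0))\,j(|z|)\,dz\le 0$ together with the equation forces $f(u(0))=f(A)\le 0$ (in fact $<0$ unless $u$ is constant, which it is not since it vanishes at infinity and $A>0$). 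Using the non-increasing-near-$0$ hypothesis and that $f$ is continuous and bounded on $[\eta,A]$, one checks $\limsup_{t\downarrow 0}f(t)<\infty$; if $f$ does not already extend continuously to $0$ one can replace $f$ on a small interval $[0,\eta]$ by a Lipschitz, non-increasing function $\tilde f$ agreeing with $f$ on $\{t: u \text{ takes values} \ge \eta\}\cup(\eta,A]$, without changing the equation satisfied by $u$ (again because $u$ only samples $f$ on its own range). Thus $u$ solves the problem of Theorem~\ref{thm2.4} with this $A$ and this $\tilde f\in C^{0,1}([0,A])$, non-increasing for small $t$, with $u(0)=A$, $0\le u\le A$, and $u\to 0$ at infinity.

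Applying Theorem~\ref{thm2.4} then gives that $u$ (after the translation) is radially symmetric about $0$ and strictly decreasing in the radial direction. Undoing the translation yields radial symmetry of the original $u$ about $x_0$, which is precisely the asserted conclusion: either $u\equiv 0$, or up to translation $u$ is radially symmetric and strictly decreasing in $|x|$.

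The main obstacle I anticipate is the bookkeeping around the point value $f(0)$ and, more subtly, making sure the value $A$ attained at the origin really is the \emph{global} maximum and not merely a local one—but this is handled cleanly by the decay $u(x)\to 0$ at infinity, which guarantees a global max is attained at a finite point. A secondary technical care is justifying the pointwise inequality $-\Psi(-\Delta)u(0)\le 0$ at a maximum in the viscosity sense (touching $u$ from above at $0$ by a constant, which is an admissible test function), and checking that the modification of $f$ near $0$ does not alter the viscosity solution property of $u$; both are routine given the framework already set up in the excerpt, since $u$ takes values in $[0,A]$ and the two definitions of $\Psi(-\Delta)$ coincide on $\cC^2_b$ test functions.
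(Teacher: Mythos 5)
Your reduction is exactly the paper's proof: since $u$ is nonnegative, continuous and vanishes at infinity, a nontrivial $u$ attains its maximum $A=\max_{\R^d}u>0$ at some $x_0$, and after translating $x_0$ to the origin the hypotheses of Theorem~\ref{thm2.4} hold, giving radial symmetry and strict radial decrease. The additional bookkeeping you do around the value $f(0)$ is harmless but unnecessary (the paper omits it), since the $C^{0,1}$ hypothesis already provides a Lipschitz extension of $f$ to $[0,A]$ that remains non-increasing for small arguments, so Theorem~\ref{thm2.4} applies directly.
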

 Concerning radial sets, we get
 
\begin{corollary}\label{cor-from-thm2.3}
	Let $r,A>0$, $A_1\in \R$, $d\geq2$, and $f\in C^{0,1}([0,A])$ be non-increasing. Then any 
	H\"{o}lder continuous solution $u$ of 
	\begin{equation*}%\label{eq:overdetermined3b}
	\left\{\begin{aligned}
	\Psi(-\Delta)u&=f(u)&&\text{in $\R^d\setminus \overline{B_r(0)}$,}\\
	u&=A &&\text{in $\overline{B_r(0)}$,}\\
	%\tr\Big(\frac{u-A}{V\circ\delta_{\R^d\setminus B_r(0)}}\Big)&=A_1 && \text{on $\partial B_r(0)$}
	\end{aligned}\right.
	\end{equation*}
	with $0\leq u\leq A$ and $\lim\limits_{|x|\to\infty}u(x)=0$ is radially symmetric and strictly decreasing in the radial direction.
\end{corollary}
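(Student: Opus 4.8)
The plan is to run the moving plane method exactly as in the proof of Theorem~\ref{thm2.3a}, exploiting that here the hole $\overline{B_r(0)}$ is already a ball centered at the origin, so that the reflected configurations are ordered for \emph{every} value of the moving parameter; in particular the step that for a general hole forces the overdetermined trace condition into the argument (the ``closing'' of the plane) is here vacuous, which is why no hypothesis on $\tr_V(u-A)$ nor the constant $A_1$ appears in the statement. Concretely, fix a direction, say $e_1$, and for $\lambda\ge0$ set $T_\lambda=\{x_1=\lambda\}$, $\Sigma_\lambda=\{x_1>\lambda\}$, let $x\mapsto x^\lambda$ be the reflection in $T_\lambda$, and, with $\Omega=\R^d\setminus\overline{B_r(0)}$, put $u_\lambda(x):=u(x^\lambda)$ and $w_\lambda=u_\lambda-u$. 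Then $w_\lambda$ is anti-symmetric about $T_\lambda$, $|w_\lambda|\le A$, and $w_\lambda(x)\to0$ as $|x|\to\infty$. The key geometric fact is that for $x\in\Sigma_\lambda$ one has $|x^\lambda|^2-|x|^2=4\lambda(\lambda-x_1)\le0$; hence $w_\lambda\equiv0$ on $T_\lambda$ and on $\overline{B_r(0)}\cap\Sigma_\lambda$, while at any $x\in\Omega\cap\Sigma_\lambda$ with $x^\lambda\in\overline{B_r(0)}$ (a ``lune'') one has $w_\lambda(x)=A-u(x)\ge0$. Off that lune both $u$ and $u_\lambda$ solve the equation (using that $\Psi(-\Delta)$ commutes with reflections), so writing $\Psi(-\Delta)w_\lambda=f(u_\lambda)-f(u)=c_\lambda w_\lambda$ with $c_\lambda\le0$ (because $f$ is non-increasing on $[0,A]$), $w_\lambda$ is an anti-symmetric viscosity solution in $\Omega\cap\Sigma_\lambda$ of a linear equation whose zeroth order coefficient has the favourable sign, and $w_\lambda\ge0$ on $\Sigma_\lambda\setminus(\Omega\cap\Sigma_\lambda)$. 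Everything then reduces to the single claim that $w_\lambda\ge0$ in $\Sigma_\lambda$ for every $\lambda\ge0$.

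I would prove this claim by the anti-symmetric maximum principle used for Theorem~\ref{thm2.3a} (Theorem~\ref{thm:anti-mp} together with Lemma~\ref{lemma:mp-unbounded}), which here applies on the unbounded set $\Omega\cap\Sigma_\lambda$ \emph{without} any narrowness restriction precisely because $c_\lambda\le0$ throughout. Indeed, if $\inf_{\Sigma_\lambda}w_\lambda<0$ then, by the decay at infinity, the infimum is attained at some $x_0\in\Omega\cap\Sigma_\lambda$ with $x_0^\lambda\in\Omega$ (it cannot lie on $T_\lambda$, on $\overline{B_r(0)}$, or on the lune, where $w_\lambda\ge0$). There one has
\[
\Psi(-\Delta)w_\lambda(x_0)=\int_{\Sigma_\lambda}\big(w_\lambda(x_0)-w_\lambda(y)\big)\big(j(|x_0-y|)-j(|x_0-y^\lambda|)\big)\,dy+2\,w_\lambda(x_0)\int_{\Sigma_\lambda}j(|x_0-y^\lambda|)\,dy ,
\]
and since $|x_0-y|<|x_0-y^\lambda|$ for $x_0,y\in\Sigma_\lambda$ (as $|x_0-y^\lambda|^2-|x_0-y|^2=4((x_0)_1-\lambda)(y_1-\lambda)>0$) and $j$ is strictly decreasing, the right-hand side is strictly negative; on the other hand $u_\lambda(x_0)<u(x_0)$ and $f$ non-increasing force $\Psi(-\Delta)w_\lambda(x_0)=f(u_\lambda(x_0))-f(u(x_0))\ge0$, a contradiction. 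Hence $w_\lambda\ge0$ in $\Sigma_\lambda$ for all $\lambda\ge0$.

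To finish, taking $\lambda=0$ first for $e_1$ and then for $-e_1$ gives $w_0\equiv0$ on $\{x_1>0\}$, i.e.\ $u$ is symmetric across $\{x_1=0\}$; since $e_1$ was arbitrary, $u$ is radial about the origin. For strict monotonicity, fix $\lambda>0$: the strong maximum principle (Hopf lemma) for anti-symmetric supersolutions, Theorem~\ref{thm:anti-mp2}, gives that either $w_\lambda>0$ in $\Sigma_\lambda$ or $w_\lambda\equiv0$ there; the latter would make $u$ symmetric across $T_\lambda$ as well as radial about $0$, hence $2\lambda e_1$-periodic, contradicting $u(0)=A>0$, $u(x)\to0$ and the continuity of $u$. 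Thus $w_\lambda>0$ in $\Sigma_\lambda$ for all $\lambda>0$. Writing $u=u(\rho)$ with $\rho=|x|$, for $r\le\rho_1<\rho_2$ one picks $\lambda=\frac12\sqrt{\rho_2^2-\rho_1^2}>0$ and a point $x$ with $|x|=\rho_2$ and $x_1=\sqrt{\rho_2^2-\rho_1^2}>\lambda$, so that $|x^\lambda|=\rho_1$ and $0<w_\lambda(x)=u(\rho_1)-u(\rho_2)$; hence $u$ is strictly decreasing in the radial direction (and $u<A$ on $\Omega$).

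The main obstacle is carrying out the anti-symmetric maximum-principle step rigorously on the unbounded domain $\Omega\cap\Sigma_\lambda$: one must check that $w_\lambda$ — which is only H\"older continuous and, being built from a viscosity solution by reflection, satisfies the linearized equation only off the lune — meets the hypotheses of Theorem~\ref{thm:anti-mp}, Lemma~\ref{lemma:mp-unbounded} and Theorem~\ref{thm:anti-mp2} (anti-symmetry, nonnegative prescribed values on $\Sigma_\lambda\setminus(\Omega\cap\Sigma_\lambda)$, decay at infinity, favourable sign of $c_\lambda$), and that a negative infimum is genuinely attained at an interior point lying off the lune, so that the operator can be evaluated there. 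The regularity near $\partial B_r(0)$ of $u$ and of $u/(V\circ\delta_\Omega)$ needed to invoke these statements is exactly the one discussed in Remark~\ref{R2.2}.
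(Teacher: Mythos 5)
Your proposal is correct and takes essentially the same route as the paper: its proof of this corollary is just the moving-plane argument of Theorems \ref{thm2.3}/\ref{thm2.3a} specialized to a centered ball, where $f$ non-increasing gives the favourable sign of the zeroth-order coefficient so that Lemma \ref{lemma:mp-unbounded} (applied with $K=\Omega=D_\lambda$, exactly as in Lemma \ref{lemma43a}) yields $w_\lambda\geq 0$ for every $\lambda\geq 0$ with no sliding and no trace condition, after which symmetry and strict radial monotonicity follow as you describe via Theorem \ref{thm:anti-mp2}. The only adjustment needed is to replace your heuristic pointwise evaluation of $\Psi(-\Delta)w_\lambda$ at the negative minimum --- not justified for a merely H\"older continuous viscosity solution, and with a possibly non-convergent first integral --- by the actual invocation of Lemma \ref{lemma:mp-unbounded}, which is precisely the fix you already flag in your final paragraph.
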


Moreover, the approach in complements allows also to tread annular-like sets.

\begin{theorem}\label{thm2.3b}
	Let $G_1,\ldots,G_n\subset \R^d, d\geq 2,$ be a family of compact connected sets with $\cC^2$ boundary such that $G_i\cap G_j=\emptyset$ for $i\neq j$ and let $G:=\bigcup_{k=1}^n G_k$. Let $D\subset \R^d$ be an open bounded set with $C^2$ boundary and such that $G\Subset D$. Let $A>0$ and $f\in C^{0,1}([0,A])$. Assume that for some given $B\geq0$, $A_1,\ldots,A_n\leq 0$ there is a continuous solution $u$ of the problem 
	\begin{equation*}%\label{eq:overdetermined3bb}
	\left\{\begin{aligned}
	\Psi(-\Delta)u&=f(u)&&\text{in $D\setminus G$,}\\
	u&=0 &&\text{in $\R^d\setminus D$,}\\
	u&=A &&\text{in $G$,}\\
	\tr_V(u-A)&=A_k && \text{on $\partial G_k$},\\
	\tr_V(u)&=B &&\text{on $\partial D$},
	\end{aligned}\right.
	\end{equation*}
	with $0\leq u< A$ on $\R^d\setminus G$, and
	$$
	\frac{u}{V(\delta_{D\setminus G})}\in C^2(\overline{D\setminus G})
	$$
	Then $D$ and $G$ are concentric balls and $u$ is radially symmetric and strictly decreasing in the radial direction with respect to the center of $G$.
\end{theorem}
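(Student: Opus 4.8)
The plan is to follow the moving plane method as in the proof of Theorem~\ref{thm2.3}, but carried out in the bounded region $D\setminus G$, using that both the outer boundary $\partial D$ and the inner boundary $\partial G$ carry overdetermined (constant-trace) data. First I would fix an arbitrary direction, say $e_1$, and for $\lambda\in\R$ consider the hyperplane $T_\lambda=\{x_1=\lambda\}$, the half-space $H_\lambda=\{x_1>\lambda\}$, the reflection $x\mapsto x^\lambda$ across $T_\lambda$, and the reflected function $u_\lambda(x)=u(x^\lambda)$. Start the plane at $\lambda$ large (so $H_\lambda\cap D=\emptyset$) and decrease $\lambda$. As in the standard argument, one runs the plane until the first ``bad'' event: either the reflected cap $(D\setminus G)^\lambda$ becomes internally tangent to $\partial(D\setminus G)$ at a non-boundary point, or $T_\lambda$ becomes orthogonal to $\partial D$ (or to $\partial G$) somewhere. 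The key monotonicity input is that $w_\lambda:=u-u_\lambda$ satisfies a linear equation $\Psidel w_\lambda = c_\lambda(x) w_\lambda$ on the overlap region with $c_\lambda(x)=\frac{f(u)-f(u_\lambda)}{u-u_\lambda}\in L^\infty$ (here the Lipschitz assumption $f\in\cC^{0,1}([0,A])$ is used; note we do \emph{not} need $f$ non-increasing near $0$ here because the relevant region is a narrow cap and we invoke the narrow-domain maximum principle Theorem~\ref{thm:anti-mp} rather than a sign condition on $c_\lambda$).

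The core of the argument is to show that the moving plane can be pushed down to the position $\lambda_1$ where $T_{\lambda_1}$ passes through a chosen ``extreme'' point of $\partial D$, and that at this critical position $w_{\lambda_1}\equiv0$, which forces $D$ (and then, by the same reasoning applied to $G$, also $G$) to be symmetric with respect to $T_{\lambda_1}$. Concretely: (i) show $w_\lambda\ge 0$ in $H_\lambda\cap D$ for $\lambda$ just below the starting value, using that the reflected cap is thin so Theorem~\ref{thm:anti-mp} (narrow-domain maximum principle) applies; near $\partial G$ one must also check the ordering of boundary values --- on $G^\lambda\cap H_\lambda$ (when the reflected inner hole enters the region) we have $u=A\ge u_\lambda$ automatically since $0\le u\le A$. (ii) Let $\lambda^*$ be the infimum of $\lambda$ for which $w_\mu\ge0$ in $H_\mu\cap D$ for all $\mu>\lambda$. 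Suppose for contradiction $T_{\lambda^*}$ is not the symmetry hyperplane (equivalently the reflected configuration does not coincide with the original). Then at $\lambda^*$ one of the two geometric degeneracies occurs, and in each case strict positivity of $w_{\lambda^*}$ in the interior (by the strong maximum principle, since $w_{\lambda^*}\not\equiv 0$) combined with a Hopf-type boundary lemma yields a contradiction: at an interior tangency point use Lemma~\ref{lemma:cornerpoint-hopf} or Theorem~\ref{thm:anti-mp2} to get a strict sign for the normal derivative (in the $V$-trace sense), contradicting the overdetermined condition $\tr_V(u)=B$ on $\partial D$ (resp. $\tr_V(u-A)=A_k$ on $\partial G_k$, which are the \emph{same constant on each connected piece}, but actually one needs them equal \emph{across} reflected pieces --- this is where connectedness of $D\setminus G$ and the a~posteriori argument enters). (iii) Conclude symmetry in direction $e_1$; since $e_1$ was arbitrary, $u$ is radial about a common center, hence $D$ and $G$ are concentric balls, and strict radial monotonicity follows from Theorem~\ref{thm:anti-mp2} / the Hopf lemma applied along rays.

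The main obstacle, and the place requiring genuine care beyond the bounded-domain case of Theorem~\ref{thm2.1}, is the interaction between the two boundaries: when the moving plane $T_\lambda$ is at an intermediate position, the reflected cap may simultaneously overlap part of the original annular region \emph{and} part of the reflected hole $G^\lambda$, so the set on which the linear equation for $w_\lambda$ holds and on which one knows $w_\lambda\ge0$ on the parabolic-type boundary must be tracked precisely; in particular one must verify that $w_\lambda\ge0$ on $\partial(H_\lambda\cap(D\setminus G))$ using a combination of the exterior vanishing ($u=0$ outside $D$, so $u_\lambda=0$ there too once the reflected point leaves $D$), the inner datum $u=A$ on $G$, and the induction hypothesis on the previously-cleared region. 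A second delicate point is promoting the constant-trace conditions on the possibly-disconnected $\partial G=\bigcup\partial G_k$ into a usable contradiction: since $u$ is shown radial, all the $G_k$ must in fact degenerate to a single ball (one first rules out $n\ge 2$ because a radial configuration about a single center with several disjoint $C^2$ components and $u=A$ on each is impossible unless they coincide), so part of the argument is to derive $n=1$ a~posteriori, exactly parallel to Remark after Theorem~\ref{thm2.1}. Finally, one must confirm that the narrow-domain maximum principle and Hopf lemmas quoted (Theorems~\ref{thm:anti-mp}, \ref{thm:anti-mp2}, Lemma~\ref{lemma:cornerpoint-hopf}) apply verbatim to anti-symmetric supersolutions on caps contained in $D\setminus G$ --- they do, since those statements are purely local near the relevant boundary point and insensitive to the presence of the hole $G$ far away, the nonlocal tail being controlled exactly as in Lemma~\ref{lemma:mp-unbounded}.
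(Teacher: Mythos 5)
Your overall route is the one the paper intends: since $D$ is bounded, you run the moving plane exactly as for Theorems \ref{thm2.1}/\ref{thm2.2} (start in a thin reflected cap via the narrow-domain maximum principle, Theorem \ref{thm:anti-mp}, so indeed no monotonicity of $f$ is needed), stop at the first tangency/orthogonality event at either $\partial D$ or $\partial G$, rule out strict positivity of the anti-symmetric difference there using the Hopf-type results and the constancy of $\tr_V(u)$ on $\partial D$, resp.\ $\tr_V(u-A)$ on $\partial G_k$, and conclude by arbitrariness of the direction. This is precisely the paper's (sketched) argument, which defers to the proofs of Theorems \ref{thm2.2} and \ref{thm2.3} and to \cite[Theorem~1.7]{SV14}.

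Two points in your write-up, however, would not go through as stated. First, the sign bookkeeping at the hole --- the one genuinely new feature of the annular case --- is backwards. With your convention ($w_\lambda=u-u_\lambda$, plane coming in from the right), the inequality one can hope for on the cap $H_\lambda\cap D$ is $w_\lambda\le 0$ (equivalently, the anti-symmetric function is $\ge 0$ on the \emph{reflected} side, which is how the paper sets things up); and on the reflected hole $R_\lambda(G)$ one has $u_\lambda=A\ge u$, i.e.\ exactly the \emph{unfavorable} ordering: your claim that ``$u=A\ge u_\lambda$ automatically'' is valid on $G$ itself, not on $G^\lambda$. So one cannot simply ``check the ordering of boundary values'' on the reflected hole; the argument works only because, before the critical position, $R_\lambda(G\cap H_\lambda)\subset G$, so that the difference vanishes there, and the failure of this inclusion (internal tangency of the reflected portion of $G$ with $\partial G$, or $T_\lambda$ orthogonal to $\partial G$) must be built into the definition of the critical $\lambda_0$, exactly as in \eqref{sit1-b}--\eqref{sit2-b}; it is not an automatic boundary check. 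Second, at an orthogonality (Situation 2) point the corner Hopf bound $w(t\bar{\eta})\ge C\,V(t)\,t$ of Lemma \ref{lemma:cornerpoint-hopf} is not by itself a contradiction, since at a point of $T_{\lambda_0}\cap\partial D$ (or $T_{\lambda_0}\cap\partial G$) the traces of $u$ and $u\circ R_{\lambda_0,e}$ agree trivially; it must be played against the upper bound $v(t\bar{\eta})=o(V(t)\,t)$ of Lemma \ref{lemma:diagonal-decay}, and it is in that lemma (through $\tr_V(u)=B$ on $\partial D$, resp.\ $\tr_V(u-A)=A_k$ on $\partial G_k$, together with the $\cC^2$ regularity of the boundary) that the overdetermined data actually enter; your sketch invokes only the lower bound. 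With these two corrections your proposal matches the paper's proof.
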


\begin{remark}
Consider a smooth function $\zeta$ satisfying the boundary condition 
$$\zeta=0\quad \text{in}\; D^c, \quad \text{and}\quad \zeta=A\quad \text{in} \; G.$$
$\zeta$ being smooth we have $g=\Psidel\zeta$ continuous in $D\setminus G$, and using \cite[Lemma~5.8]{CS09} we obtain
$$\Psidel (u-\zeta)=f(u)-g\quad \text{in}\; D\setminus G, \quad \text{and}\quad (u-\zeta)=0\quad \text{in}\; (D\setminus G)^c.$$
Hence, by \cite[Theorem~2.1]{KKLL18}, $(u-\zeta)$ is H\"{o}lder continuous in $\R^d$, and thus, $u$ is H\"{o}lder continuous in $\R^d$. Furthermore,
$\tr_V(u-\zeta)$ is also H\"{o}lder continuous. Note that since $\zeta$ is smooth, we have
$$\tr_V(\zeta)=0\quad \text{on}\; \partial D, \quad \text{and}\quad \tr_V(A-\zeta)=0\quad \text{on}\; \partial G_k.$$
This follows from the fact that $V(r)\gtrsim r^{a_2}$ for all $r\in (0, 1)$. Hence the operator $\tr_V$ is well defined on $u$ near the boundary. Furthermore, Remark~\ref{R2.2} suggests that the trace functions
are H\"{o}lder continuous near the boundaries.
\end{remark}

As before, as an immediate consequence, we have

\begin{corollary}\label{cor-from-thm2.3b}
	Let $0<r<R<\infty$, $A>0$, $A_1\in \R$, $d\geq2$, and $f\in C^{0,1}([0,A])$. Then any continuous solution $u$ of 
	\begin{equation*}%\label{eq:overdetermined3bbb}
	\left\{\begin{aligned}
	\Psi(-\Delta)u&=f(u)&&\text{in $B_R(0)\setminus \overline{B_r(0)}$;}\\
	u&=0 &&\text{in $\R^d\setminus B_R(0)$;}\\
	u&=A &&\text{in $\overline{B_r(0)}$;} %\\
	%\tr\Big(\frac{u-A}{V\circ\delta_{\R^d\setminus B_r(0)}}\Big)&=A_1 && \text{on $\partial B_r(0)$}
	\end{aligned}\right.
	\end{equation*}
	with $0\leq u\leq A$ is radially symmetric and strictly decreasing in the radial direction.
\end{corollary}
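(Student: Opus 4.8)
The plan is to run the moving plane method directly, following the proof of Theorem~\ref{thm2.3b}; the simplification here is that $D=B_R(0)$ and $G=\overline{B_r(0)}$ are already concentric balls, so the rotational symmetry of the configuration takes over the role played there by the overdetermined constant--trace conditions, and none of those conditions needs to be assumed. After a preliminary reduction by a nonlocal strong maximum principle (the interior statement behind Theorem~\ref{thm:anti-mp}, together with Theorem~\ref{thm:anti-mp2}) we may assume $0<u<A$ in the open annulus $\Omega:=B_R(0)\setminus\overline{B_r(0)}$; the degenerate alternative is ruled out because $u\equiv0$ on $\R^d\setminus B_R$ and $u\equiv A>0$ on $\overline{B_r}$. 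As in the remark following Theorem~\ref{thm2.3b}, $u\in\cC^{\beta}(\R^d)$ for some $\beta\in(0,1)$.

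Fix a unit vector $e$ and, for $\lambda\in[0,R)$, set $T_\lambda=\{x\cdot e=\lambda\}$, let $Q_\lambda$ be reflection across $T_\lambda$, $\Sigma_\lambda=\{x\in\Omega:\ x\cdot e>\lambda\}$, and $w_\lambda(x)=u(Q_\lambda x)-u(x)$, which is anti-symmetric with respect to $T_\lambda$. For $x\in\Sigma_\lambda$ one has $|Q_\lambda x|^2=|x|^2-4\lambda\big((x\cdot e)-\lambda\big)\le|x|^2<R^2$, so $Q_\lambda$ never carries $\Sigma_\lambda$ out of $\overline{B_R}$; moreover $Q_\lambda$ maps $\partial B_r\cap\{x\cdot e\ge\lambda\}$ into $\overline{B_r}$, so $w_\lambda\equiv0$ there just as on $T_\lambda$, while $w_\lambda\ge0$ on $\partial B_R\cap\{x\cdot e\ge\lambda\}$ because $u\ge0$ vanishes on $\partial B_R$. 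Where the $Q_\lambda$--image of $x\in\Sigma_\lambda$ falls inside $B_r$ one has $w_\lambda(x)=A-u(x)\ge0$ for free; on the complement, reflection--invariance of $\Psidel$ gives, in the viscosity sense on $\Sigma_\lambda$, an inequality $\Psidel w_\lambda-c_\lambda(x)\,w_\lambda\le g_\lambda(x)$ with $\|c_\lambda\|_\infty\le\mathrm{Lip}(f)$, where $g_\lambda$ gathers the kernel contributions from the regions on which the reflected values of $u$ equal $A$ or $0$; using $0\le u\le A$ one checks $g_\lambda\le0$, so the narrow-domain maximum principle Theorem~\ref{thm:anti-mp} (applied to the anti-symmetric $w_\lambda$) and the Hopf lemma Theorem~\ref{thm:anti-mp2} are available for $w_\lambda$.

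The moving plane then proceeds in the usual way. For $\lambda$ near $R$ the cap $\Sigma_\lambda$ sits in a thin slab and $Q_\lambda\Sigma_\lambda\subset\Omega$, so Theorem~\ref{thm:anti-mp} gives $w_\lambda\ge0$ on $\Sigma_\lambda$. Let $\lambda_0=\inf\{\mu\in[0,R):\ w_\nu\ge0 \text{ on } \Sigma_\nu \text{ for all } \nu\in(\mu,R)\}$; by continuity $w_{\lambda_0}\ge0$ on $\Sigma_{\lambda_0}$. If $\lambda_0>0$ then the strong maximum principle forces $w_{\lambda_0}>0$ in the interior of $\Sigma_{\lambda_0}$ --- the alternative $w_{\lambda_0}\equiv0$ would make $u$ symmetric about $T_{\lambda_0}$ and hence carry $\{u=A\}=\overline{B_r(0)}$ onto the ball centered at $2\lambda_0e\ne0$, which is impossible --- and then Theorem~\ref{thm:anti-mp2} along the flat part $T_{\lambda_0}\cap\Omega$ and the corner-point Hopf lemma Lemma~\ref{lemma:cornerpoint-hopf} at the corners $T_{\lambda_0}\cap\partial B_R$ and $T_{\lambda_0}\cap\partial B_r$ let one decrease $\lambda$ slightly below $\lambda_0$ keeping $w_\lambda\ge0$, contradicting minimality. (Points of $\Sigma_\lambda$ near $\partial B_r$ need no Hopf argument, since there $Q_\lambda x\in B_r$ and $w_\lambda(x)=A-u(x)\ge0$ automatically, and near $\partial B_R$ one has $w_\lambda>0$.) Hence $\lambda_0=0$, so $w_0\ge0$ on $\Sigma_0$; repeating with $-e$ gives $w_0\le0$, whence $w_0\equiv0$ and $u$ is symmetric about every hyperplane through $0$, i.e.\ radial about the center of $G$. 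Strict radial monotonicity follows from $w_\lambda>0$ in the interior of $\Sigma_\lambda$ for each $\lambda\in(0,R)$.

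The main obstacle should be the integro-differential inequality for $w_\lambda$ on $\Sigma_\lambda$ when the reflected cap meets $B_r$ (where $u\equiv A$) and the complement of $B_R$ (where $u\equiv0$): one must verify that the kernel contributions from those regions enter with the sign compatible with the anti-symmetric maximum principle and Hopf lemma, and establish this for merely continuous viscosity solutions rather than classical ones --- precisely the regime in which Theorem~\ref{thm:anti-mp} and Theorem~\ref{thm:anti-mp2} are meant to be used. A secondary difficulty is the corner analysis at $T_{\lambda_0}\cap(\partial B_R\cup\partial B_r)$, which is exactly what Lemma~\ref{lemma:cornerpoint-hopf} is designed to handle.
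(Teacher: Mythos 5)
Your overall strategy is the paper's: the corollary is proved there by simply running the moving plane argument of Theorems \ref{thm2.2}/\ref{thm2.3}/\ref{thm2.4} (with Theorem \ref{thm:anti-mp} since the domain is bounded), and you correctly observe that for concentric balls no trace conditions are needed and the critical position is $\lambda=0$. However, two of your steps do not hold up as written. First, your continuation past a putative critical value $\lambda_0>0$ is attributed to "Theorem \ref{thm:anti-mp2} along the flat part and Lemma \ref{lemma:cornerpoint-hopf} at the corners letting one decrease $\lambda$ slightly". Those results give pointwise positivity, respectively a lower bound of order $tV(t)$ at a corner, for $w_{\lambda_0}$ itself; they do not yield preservation of $w_\lambda\geq 0$ for $\lambda<\lambda_0$, and for merely H\"older continuous viscosity solutions there is no derivative-based Serrin-type corner argument to convert them into one. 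Moreover Lemma \ref{lemma:cornerpoint-hopf} is designed for Situation 2 in combination with Lemma \ref{lemma:diagonal-decay}, i.e.\ to exploit the overdetermined trace condition, which plays no role here. The correct mechanism — and the one the paper points to — is the argument in the proof of Theorem \ref{thm2.2}: if $w_{\lambda^\ast}>0$ in the cap, pick a compact $K$ on which $w_{\lambda^\ast}\geq\epsilon>0$, use continuity in $\lambda$, and apply the second assertion of Theorem \ref{thm:anti-mp} on the small-measure remainder to get $w_\lambda\geq0$ for $\lambda$ slightly below $\lambda^\ast$.

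Second, your preliminary reduction "we may assume $0<u<A$ in the open annulus" and the dismissal of the degenerate alternative via "$\{u=A\}=\overline{B_r(0)}$" are not justified: the corollary assumes only $f\in C^{0,1}([0,A])$ (no sign or monotonicity condition) and $0\leq u\leq A$, and without a sign on $f(0)$, $f(A)$ a strong maximum principle does not exclude interior points of the annulus where $u=0$ or $u=A$; note that Theorems \ref{thm2.3} and \ref{thm2.3b} assume $u<A$ off $G$ as a hypothesis, precisely because it is not free. The degenerate case $w_{\lambda_0}\equiv0$ with $\lambda_0>0$ has to be excluded differently, as at the end of the proof of Theorem \ref{thm2.2} (cf.\ Lemma \ref{lemma42}): symmetry of $u$ about the off-center hyperplane forces $u\equiv A$ on $Q_{\lambda_0}(\overline{B_r(0)})$ and $u\equiv 0$ on $Q_{\lambda_0}(\R^d\setminus B_R(0))$, and combining this with the monotonicity already established for $\mu>\lambda_0$ and with the exterior data ($u=0$ on $\R^d\setminus B_R(0)$, $u\leq A$) produces a contradiction. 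A minor point: the source term $g_\lambda$ in your inequality for $w_\lambda$ is unnecessary and its sign discussion is moot — on the set where both $x$ and $Q_\lambda x$ lie in the annulus, $w_\lambda$ satisfies $\Psidel w_\lambda+c_\lambda w_\lambda=0$ in the viscosity sense, and the regions where the reflected values are $0$ or $A$ enter only through the exterior nonnegativity hypothesis of Theorem \ref{thm:anti-mp}, exactly as you in fact verify.
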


\begin{remark}
	We note that an assumption as in \eqref{corrected assumption} has been used in the proof of Lemma \ref{lemma:diagonal-decay} as part of studying the corner point case. However, in the previous publications it was missing in the statements of the Theorems \ref{thm2.1}, \ref{thm2.2}, \ref{thm2.3}, \ref{thm2.3a}, and \ref{thm2.3b}. We emphasize that this assumption is on the contrary not needed in the proofs of Theorem \ref{thm2.4} and Corollaries \ref{cor-from-thm2.4}, \ref{cor-from-thm2.3}, and \ref{cor-from-thm2.3b}.
\end{remark}

%%%%%%%%%%%%%%%%%%%%%%%%%%%%%%%%%%%%%%%%%%%%%%%%%%%%%%%%%%%%%%%%%%%%%%%%%%%%%%%%%%%

\section{Proofs of main results}

\subsection{Maximum principles for anti-symmetric viscosity solutions}
As is well known, maximum principles for anti-symmetric solutions play a key role in the analysis
of overdetermined problems. In this section we develop all the required tools in the direction
which will later be used to establish our main results. Since our framework is based on viscosity solutions, we recall the definition of viscosity solution from \cite{CS09} for convenience.

\begin{definition}[Viscosity solution]
Let $D\subset \R^d$ be open. An upper semi-continuous function $u:\R^d\to\R$ in $\bar D$ is said to be a
\emph{viscosity sub-solution} of
\begin{equation}
\label{visco}
-\Psidel u + c(x) u =  g\quad \text{in}\; D,
\end{equation}
if for every $x\in D$ and test function $\varphi\in \cC_{\rm b}(x)$ ($\cC_{\rm b}(x)$ is the set of
all bounded continuous functions that are twice continuously differentiable in a neighborhood of $x$)
satisfying $u(x)=\varphi(x)$ and
$$
\varphi(y) > u(y) \quad y\in \R^d\setminus\{x\},
$$
we have
$$
-\Psidel \varphi(x) + c(x) u(x) \geq  g(x).
$$
Similarly, a lower semi-continuous function is a \emph{viscosity super-solution} of \eqref{visco} whenever
$\varphi(y) < u(y)$, $y\in \R^d\setminus\{x\}$, implies $-\Psidel \varphi(x) + c(x) u(x) \leq  g(x)$.
Furthermore, $u$ is said to be a \emph{viscosity solution} if it is both a viscosity sub- and super-solution. 
\end{definition}

Let $H$ be the half-space defined by 
$$
H=\{x\in\R^d\; :\; x\cdot e>\lambda\},
$$
and we denote by $x_{\lambda}=R(x)=R_{\lambda,e}(x):=x-2(x\cdot e)e+2\lambda e$ the reflection at $\partial H=\{x\cdot e=\lambda\}$ of $x$. Let $u:\R^d\to \R$, then we call $u$ \textit{anti-symmetric (w.r.t. $H$)} if $u=-u\circ R$ on $\R^d$. Let $\Omega\subset H$ and $u$ be a bounded anti-symmetric super-solution of $\Psidel u= g$ in $\Omega$ with $u\geq 0$ in $H\setminus\Omega$.
Define 
\begin{equation}\label{defi:function-for-anti-mp}
v=\left\{\begin{array}{lll}
-u & \text{if} \; x\in \{u<0\} \cap \Omega
\\
0 & \text{otherwise}.
\end{array}
\right.
\end{equation}
Denote by $\Sigma= \{u<0\} \cap \Omega$. Note that $v$ is upper semi-continuous and $\Sigma$ is open.
We claim that 
\begin{equation}\label{A3}
-\Psidel v \geq g \quad \text{in}\; \Sigma,
\end{equation}
in viscosity sense. To prove the above, consider any point $x\in\Sigma$ and a test function $\varphi$ that touches $v$ from above at the point $x$.
Then $\psi:=\varphi + (-u-v)$ touches $-u$ from above at the point $x$. Since $-u$ is a sub-solution, it follows that
$$-\Psidel\varphi(x) -\Psidel\eta(x) =-\Psidel \psi(x) \geq  g(x), $$
where $\eta=-u-v$. To prove \eqref{A3}, we only need to show that 
\begin{equation*}%\label{A4}
-\Psidel\eta(x)\leq 0.
\end{equation*}
Since $\eta=0$ in $\Sigma$ and equals to $-u$ in $\Sigma^c$, we have
\begin{align*}
-\Psidel\eta(x) &= -\int_{\Sigma^c} u(z) j(|x-z|) dz
\\
&= -\int_{R(\Sigma)} u(z) j(|x-z|) dz - \int_{R(H\setminus \Sigma)\cap(H\setminus \Sigma) }u(z) j(|x-z|) dz
\end{align*}
Note that the first term is non-positive due to the anti-symmetry of $u$ and the fact $u<0$ in $\Sigma$. On the other hand the 
second term equals to 
$$
\int_{R(H\setminus \Sigma)\cap(H\setminus \Sigma) }u(z) j(|x-z|) dz
=\int_{H\setminus \Sigma}u(z) (j(|x-z|)-j(|x-R(z)|) dz\geq 0,$$
since $u\geq 0$ in $H\setminus \Sigma$, and $j$ is radially decreasing. This completes the proof of \eqref{A3}.
\eqref{A3} will be useful in proving 
anti-symmetric maximum principle.

\begin{theorem}\label{thm:anti-mp}
Let $H$ be a half-space, $\Omega\subset H$ open and bounded, and $c$ be bounded. Then there exists $p\geq 1$ and $C>0$,
depending only on $\Psi$ and diameter of $\Omega$, such that if $u\in\cC_{b}(\R^d)$ is an anti-symmetric super-solution of 
$$\Psidel u + c(x) u=0 \quad\text{in $\Omega$,}$$
with $u\geq0$ on $H\setminus \Omega$, then we must have 
\begin{equation*}%\label{A5}
\sup_{\Omega} u^-\leq C \|c^+\|_{L^\infty(\Omega)}\,\|u^-\|_{L^p(\Omega)}.
\end{equation*}
In particular, given $c_{\infty}>0$ such that $c^+\leq c_{\infty}$ on $\Omega$ there is $\delta>0$ such that if $|\Omega|<\delta$, then $u\geq 0$.
\end{theorem}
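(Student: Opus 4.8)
The plan is to use \eqref{A3} to reduce the estimate to an ABP--type bound for a genuine scalar viscosity subsolution of $\Psidel$, and then to obtain that bound from the semigroup representation of subsolutions on bounded sets together with the Green--function estimates valid under \eqref{assumption1}. If $u^-\equiv 0$ on $\Omega$ there is nothing to prove, so assume $\Sigma:=\{x\in\Omega:u(x)<0\}\neq\emptyset$; then $\Sigma$ is open, bounded, $\Sigma\subset\Omega\subset H$, and $u\ge 0$ on $H\setminus\Sigma$. Let $v$ be as in \eqref{defi:function-for-anti-mp}, i.e.\ $v=u^-=-u$ on $\Sigma$ and $v\equiv 0$ on $\R^d\setminus\Sigma$; it is bounded, nonnegative and upper semicontinuous, with $\sup_\Omega u^-=\sup_{\R^d}v$ and $\|u^-\|_{L^p(\Omega)}=\|v\|_{L^p(\Sigma)}$. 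Rewriting the equation as $\Psidel u=-c\,u$, \eqref{A3} (applied with $g:=-c\,u$, which on $\Sigma$ equals $c\,v$ since $u=-v$ there) together with $v\ge 0$ shows that $v$ is a nonnegative viscosity subsolution of $\Psidel v\le \kappa v$ in $\Sigma$, with $v\equiv 0$ in $\R^d\setminus\Sigma$, where $\kappa:=\|c^+\|_{L^\infty(\Omega)}$.

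For the ABP--type bound, translate $\Omega$ so that $\Sigma\subset B_R$ with $R:=\diam\Omega$. Since $-\Psidel$ is the generator of $X$, the semigroup representation of viscosity subsolutions on bounded open sets (the observation referred to in the Introduction; cf.\ Remark~\ref{R2.2} and \cite{BL17,BL19}) gives, for $x\in\Sigma$,
\[
v(x)\ \le\ \Ex_x\!\big[v(X_{\tau_\Sigma})\big]\ +\ \kappa\,\Ex_x\!\Big[\int_0^{\tau_\Sigma}v(X_s)\,ds\Big].
\]
Because $\Sigma$ is open and $v\equiv 0$ on $\Sigma^c$, we have $v(X_{\tau_\Sigma})=0$; writing $G_\Sigma$ and $G_{B_R}$ for the Green functions of $-\Psidel$ in $\Sigma$ and in $B_R$, and using $\tau_\Sigma\le\tau_{B_R}$ (hence $G_\Sigma\le G_{B_R}$ on $\Sigma$) together with $v\equiv 0$ off $\Sigma$,
\[
v(x)\ \le\ \kappa\int_\Sigma G_\Sigma(x,y)\,v(y)\,dy\ \le\ \kappa\int_{B_R}G_{B_R}(x,y)\,v(y)\,dy.
\]
Under \eqref{assumption1} the Green function obeys $G_{B_R}(x,y)\lesssim|x-y|^{-d}\Psi(|x-y|^{-2})^{-1}\lesssim|x-y|^{2a_1-d}$ for small $|x-y|$ and remains bounded for $|x-y|$ away from $0$ (a consequence of the heat--kernel/Green--function estimates used throughout, see \cite{KKLL18,BGR14b,BGR15}); hence, for any fixed $p'$ with $p'(d-2a_1)<d$, the quantity $C_0:=\sup_{x\in B_R}\|G_{B_R}(x,\cdot)\|_{L^{p'}(B_R)}$ is finite and depends only on $\Psi$, $d$ and $R$. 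H\"older's inequality with the conjugate exponent $p=p'/(p'-1)$ (which can be taken to be any $p>d/(2a_1)$, and $p=1$ when $G_{B_R}$ is bounded) then yields $v(x)\le\kappa\,C_0\,\|v\|_{L^p(\Sigma)}$ for every $x$, i.e.\ $\sup_\Omega u^-=\sup_{\R^d}v\le C_0\,\|c^+\|_{L^\infty(\Omega)}\,\|u^-\|_{L^p(\Omega)}$, which is the asserted inequality with $C=C_0$.

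For the final assertion, if $c^+\le c_\infty$ on $\Omega$ then H\"older gives $\|u^-\|_{L^p(\Omega)}\le|\Omega|^{1/p}\sup_\Omega u^-$, so the estimate just proved becomes $\sup_\Omega u^-\le C_0\,c_\infty\,|\Omega|^{1/p}\sup_\Omega u^-$. Choosing $\delta:=(C_0 c_\infty)^{-p}$, the hypothesis $|\Omega|<\delta$ forces $\sup_\Omega u^-=0$, i.e.\ $u\ge 0$ in $\Omega$ and hence, by the assumption on $H\setminus\Omega$, in all of $H$; anti--symmetry then gives $u\le 0$ in $\R^d\setminus H$.

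The conceptual heart of the proof, \eqref{A3}, is already in hand, so the remaining work is to assemble standard but nontrivial tools: the semigroup representation of viscosity subsolutions on bounded open sets, and --- the genuinely technical ingredient --- the uniform (in the pole) $L^{p'}$--integrability of $G_{B_R}$, which relies on the sharp heat--kernel estimates for $\Psidel$ valid under \eqref{assumption1}--\eqref{assumption2}. The only delicate minor points are justifying $v(X_{\tau_\Sigma})=0$ (immediate, since $\Sigma$ is open and $v\equiv 0$ on $\Sigma^c$, including $\partial\Sigma$) and that the merely upper semicontinuous $v$ is an admissible subsolution in the open set $\Sigma$.
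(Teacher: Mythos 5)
Your argument is correct in outline and shares its skeleton with the paper's proof: both reduce matters to the auxiliary function $v$ of \eqref{defi:function-for-anti-mp} via \eqref{A3}, pass to a stochastic representation, and conclude with an $L^p$ bound plus absorption for the small-volume statement. The difference lies in the last two steps. The paper does not apply a representation inequality on $\Sigma=\{u<0\}\cap\Omega$ directly: since $\partial\Sigma$ is in general irregular, it exhausts $\Sigma$ by smooth sets $D_n$, solves $-\Psidel w_n=-g$ in $D_n$ with exterior data $v$ (where $g=\|c^+\|_{L^\infty(\Omega)}v$), obtains $v\le w_n$ from the comparison principle \cite{CS09,KKLL18}, represents only the genuine solutions $w_n$ stochastically, and then quotes the ABP-type estimate of \cite[Theorem~3.3]{BL17} before letting $n\to\infty$. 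You instead assert the representation inequality for the merely upper semicontinuous subsolution $v$ on the possibly irregular set $\Sigma$ itself, and re-derive the ABP bound by hand through the Green potential: domain monotonicity $G_\Sigma\le G_{B_R}$, the bound $G_{B_R}(x,y)\lesssim |x-y|^{-d}\Psi(|x-y|^{-2})^{-1}\lesssim|x-y|^{2a_1-d}$, uniform $L^{p'}$ integrability, and H\"older. This buys a more self-contained and explicit estimate (with an explicit admissible range $p>d/(2a_1)$), but two points need shoring up: (i) the representation inequality for subsolutions on an arbitrary bounded open set is precisely what the paper's $D_n$-approximation plus comparison is designed to justify, so you should either insert that approximation (compare $v$ with the Dirichlet solutions on smooth subdomains and pass to the limit) or cite a version valid without boundary regularity; (ii) the Green-function upper bound is not stated verbatim in \cite{KKLL18,BGR14b,BGR15} and must be assembled from the heat-kernel estimates under \eqref{assumption1}--\eqref{assumption2}, whereas one could bypass the Green function entirely by invoking \cite[Theorem~3.3]{BL17} as the paper does. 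Your deduction of the final assertion ($\|u^-\|_{L^p(\Omega)}\le|\Omega|^{1/p}\sup_\Omega u^-$ and absorption for $|\Omega|$ small) is the intended one.
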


\begin{proof}
As discussed above, we consider the set $\Sigma=\{u<0\}\cap \Omega$ and the function $v$ as in \eqref{defi:function-for-anti-mp}. From \eqref{A3} we have
\begin{equation*}%\label{ET3.1A}
-\Psidel v + c^+(x) v \geq 0 \quad \text{in}\; \Sigma.
\end{equation*}
We follow the arguments of \cite[Theorem~3.1]{BL19}.
Since $\partial\Sigma$ is not nice in general, we consider a collection of increasing smooth sets $\{D_n\}_n$, contained in $\Sigma$ and
increasing to $\Sigma$. Let $w_n$ be the unique viscosity solution of
$$-\Psidel w_n =-g\quad \text{in}\; D_n, \quad \text{and}\quad w_n=v\quad \text{in} \; D^c_n,$$
where $g=\|c^+\|_{L^\infty(\Omega)} v$. From the comparison principle \cite[Theorem~5.2]{CS09}, \cite[Theorem~3.8]{KKLL18} it follows that
$$v\leq w_n\quad \text{in}\; \R^d.$$
Moreover, $w_n$ attends a stochastic representation given by
\begin{equation}\label{ET3.1B}
w_n(x)= \Ex_x[v(X_{\tau_{D_n}})] + \Ex_x\left[\int_0^{\tau_{D_n}} g(X_s) ds\right], \quad x\in D_n.
\end{equation}
Hence, using \eqref{assumption1}-\eqref{ET3.1B} and \cite[Theorem~3.3]{BL17} we see that for some
constant $C, p>1$ we have
$$\sup_{D_n^c} w_n + C\|g\|_{L^p(D_n)}\leq \sup_{D_n^c} |v| + C\|g\|_{L^p(\Omega)},$$
which in turn, implies
$$\sup_{D_n} v \leq \sup_{D_n^c} |v| + C\|g\|_{L^p(\Omega)}.$$
Thus, the result follows by letting $n\to\infty$.
\end{proof}

Our next result concerns a version of Hopf's Lemma for $\Psi(-\Delta)$ for anti-symmetric super-solutions

\begin{theorem}\label{thm:anti-mp2}
Let $H$ be a half-space, $\Omega\subset H$ open, and $c\in L^\infty(\Omega)$. If $u\in\cC_b(\R^d)$ is an anti-symmetric super-solution of 
$$\Psidel u + c(x) u=0 \quad\text{in $\Omega$}$$
with $u\geq0$ in $H$. Then either $u\equiv 0$ or $u>0$ in $\Omega$. Moreover, if $u\not\equiv 0$ and there is $x_0\in \partial\Omega\setminus \partial H$ with $u(x_0)=0$ and such that there is a ball $B\subset \Omega$ with $x_0\in \partial B$, then there is $c>0$ such that 
\begin{equation*}%\label{thm:anti-mp2:eq0}
\liminf_{t\to 0^+} \frac{u(x_0-t\eta)}{V(t)}\geq c.
\end{equation*}
In particular, if $\tr_V(u)(x_0)$ exists, then $\tr_V(u)(x_0)>0$. Here $\eta$ denotes the outward normal at $x_0$.
\end{theorem}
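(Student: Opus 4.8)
I would prove the two assertions separately, the dichotomy ($u\equiv0$ or $u>0$ in $\Omega$) and then the quantitative Hopf estimate, the common theme being that $u\le0$ in $H^{c}$ (by anti-symmetry), so neither the scalar maximum principle nor the scalar sharp Dirichlet estimates apply to $u$ directly; instead one feeds in anti-symmetric comparison functions and controls the contributions from the reflected region, which is always at a positive distance from the sets one works on.

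\emph{Dichotomy.} Suppose $u\not\equiv0$ but $u(x_{1})=0$ for some $x_{1}\in\Omega$. Since $u\ge0$ on $H$ and $u\not\equiv0$, anti-symmetry forces $U:=\{u>0\}\cap H$ to be a nonempty open set. Fix a small $r>0$ with $\overline{B_{r}(x_{1})}\subset\Omega$, pass to the equivalent localized formulation of viscosity solutions \cite{CS09}, and test $u$ from below at $x_{1}$ with $\varphi(x)=-\varepsilon|x-x_{1}|^{2}$ (admissible because $u\ge0=\varphi(x_{1})$ near $x_{1}$). Writing $\Phi=\varphi$ on $B_{r}(x_{1})$ and $\Phi=u$ elsewhere, the supersolution inequality reads $\tfrac12\int(\Phi(x_{1}+z)+\Phi(x_{1}-z))\,j(|z|)\,dz\le c(x_{1})u(x_{1})=0$; letting $\varepsilon\downarrow0$ yields $\int_{|z|\ge\varrho}u(x_{1}+z)\,j(|z|)\,dz\le0$ for every small $\varrho>0$. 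Splitting this integral according to whether $x_{1}+z\in H$ and using $u=-u\circ R$ on $H^{c}$, it equals $\int_{H,\,|y-x_{1}|\ge\varrho}u(y)\bigl(j(|y-x_{1}|)-j(|R(y)-x_{1}|)\bigr)\,dy$ up to a nonnegative term of size $O(\varrho^{d})$. Because $x_{1}\in H$ implies $|R(y)-x_{1}|>|y-x_{1}|$ for every $y\in H$, $j$ is strictly decreasing, and $u\ge0$ on $H$, the displayed integrand is $\ge0$; letting $\varrho\downarrow0$ forces $u=0$ a.e. on $H$, hence $u\equiv0$ on $H$ (lower semicontinuity) and on $\R^{d}$ (anti-symmetry), a contradiction. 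Thus $u>0$ in $\Omega$.

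\emph{Hopf estimate.} Now $u>0$ in $\Omega$; let $B=B_{\rho}(z_{0})\subset\Omega$ be the given interior ball with $x_{0}\in\partial B$, so $\eta=(x_{0}-z_{0})/\rho$ and $\delta_{B}(x_{0}-t\eta)=t$ along the normal ray. Put $y_{0}=x_{0}-r_{0}\eta$, $\mathcal A=B_{r_{0}}(y_{0})\setminus\overline{B_{r_{0}/2}(y_{0})}\subset B$, and $m=\min_{\overline{B_{r_{0}/2}(y_{0})}}u>0$; choose $r_{0}$ small enough that $|\mathcal A|$ is below the threshold of Theorem~\ref{thm:anti-mp} and $\dist(B_{r_{0}}(y_{0}),\partial H)\ge d_{0}>0$ (possible since $x_{0}\notin\partial H$). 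Let $\tilde u$ be the bounded anti-symmetric viscosity solution of $\Psi(-\Delta)\tilde u+\|c^{+}\|_{\infty}\tilde u=0$ in $\mathcal A$ whose exterior datum equals $m$ on $\overline{B_{r_{0}/2}(y_{0})}$, $0$ on $H\setminus B_{r_{0}}(y_{0})$, and $-m$, $0$ on the respective reflected sets; equivalently $\tilde u(x)=m\,\Ex_{x}[e^{-\|c^{+}\|_{\infty}\tau_{\mathcal A}};X_{\tau_{\mathcal A}}\in\overline{B_{r_{0}/2}(y_{0})}]-m\,\Ex_{x}[e^{-\|c^{+}\|_{\infty}\tau_{\mathcal A}};X_{\tau_{\mathcal A}}\in R(\overline{B_{r_{0}/2}(y_{0})})]$. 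Since $u>0$ in $\Omega$ gives $\Psi(-\Delta)u+\|c^{+}\|_{\infty}u\ge\Psi(-\Delta)u+c\,u\ge0$ in $\mathcal A$, the difference $u-\tilde u$ is an anti-symmetric supersolution of the same equation in $\mathcal A$ and is $\ge0$ on $H\setminus\mathcal A$ ($u\ge m=\tilde u$ on the inner ball, $u\ge0=\tilde u$ otherwise), so Theorem~\ref{thm:anti-mp} (using $|\mathcal A|<\delta$) gives $u\ge\tilde u$ in $\mathcal A$. It remains to bound $\tilde u$ from below: its positive part is a harmonic-measure type function with datum on a piece of $\partial\mathcal A$ at positive distance from $x_{0}$, for which the sharp Dirichlet/boundary-Harnack estimates behind $v\approx V(\delta_{D})$ (\cite{BGR15,KKLL18}, cf. \cite{BL19}) give $\ge c_{1}V(\delta_{B_{r_{0}}(y_{0})}(x))$ near $x_{0}$, while the negative reflected part perturbs this only by $\lesssim j(2d_{0})\,m\,|B_{r_{0}/2}(y_{0})|\,\Ex_{x}[\tau_{\mathcal A}]$, a small multiple of the same quantity, absorbed by taking $r_{0}$ small; hence $\tilde u(x)\ge c_{0}V(\delta_{B_{r_{0}}(y_{0})}(x))$ near $x_{0}$. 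Evaluating along $x_{0}-t\eta$ gives $u(x_{0}-t\eta)\ge\tilde u(x_{0}-t\eta)\ge c_{0}V(t)$ for small $t>0$, which is the claimed $\liminf$. Finally $\delta_{\Omega}(x_{0}-t\eta)\le t$ since $x_{0}\in\partial\Omega$, so $V(\delta_{\Omega}(x_{0}-t\eta))\le V(t)$ and, whenever $\tr_{V}(u)(x_{0})$ exists, $\tr_{V}(u)(x_{0})=\lim_{t\downarrow0}u(x_{0}-t\eta)/V(\delta_{\Omega}(x_{0}-t\eta))\ge c_{0}>0$.

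\emph{Main obstacle.} The delicate point is the last step: the sharp lower bound $\tilde u\gtrsim V(\delta_{B_{r_{0}}(y_{0})})$ near $x_{0}$ with a constant that survives the anti-symmetrization. This rests on the identification of the renewal function $V$ with the Dirichlet boundary behaviour of $\Psi(-\Delta)$ (the estimates $v\approx V(\delta_{D})$ of \cite{BGR15} together with the boundary regularity of \cite{KKLL18}) and on the quantitative fact that the sign-reversed reflected datum, sitting at distance $\ge2d_{0}>0$ from $\mathcal A$, interacts with it only through the bounded number $j(2d_{0})$ and hence is a lower-order perturbation. The viscosity bookkeeping in the dichotomy — passing to the localized formulation and justifying the pointwise evaluation of $\Psi(-\Delta)$ at an interior zero — is routine but must also be done with care.
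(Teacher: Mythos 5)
Your first part (the dichotomy) is correct and is essentially the paper's own argument: test from below at an interior zero, rewrite the nonlocal evaluation as an integral over $H$ against the kernel difference $j(|y-x_1|)-j(|R(y)-x_1|)\ge 0$, and use strict monotonicity of $j$ to force $u\equiv 0$; the bookkeeping with the $O(\varrho^d)$ correction is fine because $x_1$ has positive distance to $\partial H$.

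The Hopf part, however, has a genuine gap at the comparison step. You introduce ``the bounded anti-symmetric viscosity solution $\tilde u$'' and then claim it is ``equivalently'' given by the explicit formula $\tilde u(x)=m\,\Ex_x[e^{-\|c^+\|_\infty\tau_{\mathcal A}};X_{\tau_{\mathcal A}}\in\overline{B_{r_0/2}(y_0)}]-m\,\Ex_x[e^{-\|c^+\|_\infty\tau_{\mathcal A}};X_{\tau_{\mathcal A}}\in R(\overline{B_{r_0/2}(y_0)})]$. These are not the same object: the function defined by the formula vanishes identically on $R(\mathcal A)$ but not on $\mathcal A$, so it is \emph{not} anti-symmetric, and therefore $u-\tilde u$ is not anti-symmetric either; Theorem \ref{thm:anti-mp} cannot be invoked for it. If you repair this by anti-symmetrizing (take $h=\phi-\phi\circ R$ with $\phi$ the solution in $\mathcal A$ with datum $m\,1_{\overline{B_{r_0/2}(y_0)}}$), then $h$ is no longer a solution of the homogeneous equation in $\mathcal A$: the reflected bump contributes an inhomogeneous error of size roughly $m\,j(d_0)\,r_0^d$, and Theorem \ref{thm:anti-mp}, as stated, only treats homogeneous anti-symmetric supersolutions, so an additional absorption argument is needed. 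Moreover, the step you yourself flag as delicate --- the lower bound $\tilde u\gtrsim V(\delta_{B_{r_0}(y_0)})$ near $x_0$, with constants surviving both the $e^{-\|c^+\|_\infty\tau}$ damping and the negative reflected datum --- is not covered by the results you cite: the estimate $v\approx V(\delta_D)$ of \cite{BGR15} concerns the exit time, not the killed harmonic measure of the inner ball of an annulus, and \cite{KKLL18} gives upper/regularity bounds, not this lower bound. It can be proved (Ikeda--Watanabe formula plus exit-time lower bounds, with a check that $r_0^d j(2r_0)\to\infty$ beats the fixed quantities $\|c^+\|_\infty$ and $j(d_0)$), but as written it is an assertion, not a proof. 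The paper sidesteps both difficulties: it compares $u$ with $w=a(\vartheta-\vartheta\circ R)$, where $\vartheta$ is the exact solution of the ball problem \eqref{eq:ball} whose $V$-trace is the known positive constant, and it establishes $u\ge w$ for small $a$ not via the narrow-domain maximum principle but by a direct test-function contradiction at the negative minimum of $u-w$ as $a\to 0$, which tolerates the inhomogeneity $\Psi(-\Delta)w\le aC$ automatically. Your strategy can likely be completed along these lines, but the two steps above must be supplied, not cited.
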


\begin{proof}
Assume $u\not\equiv 0$ on $\R^d$, then there is a compact set $K\Subset H$ with $\inf_{K} u=\delta>0$.
Suppose that
$u(x)=0$ for some $x\in\Omega$. Consider a test function $\varphi\geq 0$ with the following property: for some ball $B_{2\delta}(x)\Subset \Omega$ we have
$$\varphi\leq u \;\text{in}\; \R^d, \quad \varphi=0\; \; \text{in}\; B_\delta(x), \quad \varphi=u\; \; \text{in}\; B^c_{2\delta}(x).$$
We may also choose $\delta$ small enough so that the ball is far from $K$.
Thus, by definition, we have
$$\Psidel \varphi(x)\geq 0,$$
which implies
\begin{align*}
0\geq \int_{\R^d}\varphi(y) j(|x-y|) dy &=\int_{R(B_{2\delta}(x))} u(z) j(|x-y|) dy + \int_{H\setminus B_{2\delta}(x)} u(y) (j(|x-y|)-j(|x-y_\lambda|)) dy
\\
&=\int_{R(B_{2\delta}(x))} u(z) j(|x-y|) dy + \int_{K} u(y) (j(x-y|)-j(|x-y_\lambda|)) dy.
\end{align*}
Thus if we choose $\delta$ small enough the RHS of the above display is positive which leads to a contradiction. Hence we must have $u>0$ in $\Omega$. This proves the first part.

Now we prove the second part. Let $B$ be a ball that touches $\Omega$ at $x_0$ from inside. Let $\vartheta$ be the expected exit time from $B$.
Define $w=a(\vartheta-\vartheta\circ R)$ as before. It is straightforward to see that
$$\Psidel w\leq a C\quad \text{in}\; B,$$
for some constant $C$. To complete the proof we only need to show that for some $a_0>0$ we have $u\geq w$ in $B$ and then, the proof follows from \eqref{eq:ball}. 
To the contrary, assume that no such $a_0$
exists. Now it follows from \cite[Lemma~5.8]{CS09} that $v_a=u- w$ is an anti-symmetric super-solution of
$$\Psidel v_a= - (\|c\|_{L^\infty(\Omega)} u +aC)=-g\quad \text{in}\; B.$$
Let $x_a\in\mathrm{Arg min}_{B} v_a$. Since $\min_{\bar B} v_a<0$, it follows that $x_a\to \partial B$ as $a\to 0$. This also implies
$u(x_a)\leq  w(x_a)=a \vartheta(x_a)\to 0$ as $a\to 0$.
Now we choose a test function
$\varphi(\geq \min_{\bar B} v_a)$ that touches $v_a$ at $x_a$ from below and agrees with $v_a$ outside a ball $B'\Subset B$. By definition, it then follows that
$$\Psidel \varphi(x_a)\geq - (\|c\|_{L^\infty(\Omega)} u(x_a) + Ca)\to 0 \quad \text{as}\; a\to 0.$$
Let us now compute the LHS. Let $K$ be any compact set inside $B$ and we may assume that $x_a\notin K$.
\begin{align*}
\Psidel \varphi(x_a) &= \int_{\R^d} (\varphi(x_a)-\varphi(y)) j(|x_a-y|) dy
\\
&=\int_{H} (\varphi(x_a)-\varphi(y)) (j(|x_a-y|)-j(|x_a-y_\lambda|)) dy
\\
&\leq \int_{K} (\varphi(x_a)-\varphi(y)) (j(|x_a-y|)-j(|x_a-y_\lambda|)) dy
\\
&\leq (a\|\vartheta\|_{L^\infty(B)}-\min_K u )\int_{K}  (j(|x_a-y|)-j(|x_a-y_\lambda|)) dy <0,
\end{align*}
for all $a$ small. This is a contradiction. Thus we have the second part.
\end{proof}

Our last result on maximum principles for antisymmetric functions concerns unbounded sets, where we have a sign on the linear part given by $c$.

\begin{lemma}\label{lemma:mp-unbounded}
Let $H$ be a half-space, $\Omega\subset H$ open, and $c_{\infty},R>0$. Then there is $\delta>0$ such that the following holds. Let $c\in L^\infty(\Omega)$ with $c^-\leq c_{\infty}$ and such that there is $K\subset \Omega$ with $\Omega\setminus K \subset B_R(0)$ and $c\geq 0$ on $K$. If $u\in \cC_b(\R^d)$ is an anti-symmetric super-solution of
\[
\Psi(-\Delta)u+c(x)u=0 \quad \text{in $\Omega$}
\]	
with $u\geq0$ in $\Big(H\setminus \Omega\Big)\cup\Big(\Big(\Omega\cap\{x\in H\;:\; \dist(x,\partial H)\geq \delta\}\Big)\setminus K\Big)$
and $\liminf\limits_{x\in H, |x|\to\infty}u(x)\geq0$, then $u\geq0$ in $H$.
\end{lemma}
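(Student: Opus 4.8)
The plan is to combine the interior positivity propagation from Theorem~\ref{thm:anti-mp2} with a careful decomposition of $\Omega$ into a "near-boundary strip" and a compact core, controlled by two separate smallness/sign arguments. Suppose for contradiction that $u$ is negative somewhere in $H$. Since $\liminf_{|x|\to\infty}u\geq 0$ in $H$ and $u$ is bounded and continuous, the infimum of $u$ over $H$ is attained at some point $x_0\in H$ (possibly approached along a sequence, but the relevant mass stays in a bounded region). The hypotheses tell us $u\geq 0$ on $(H\setminus\Omega)\cup\big((\Omega\cap\{\dist(\cdot,\partial H)\geq\delta\})\setminus K\big)$, so a point where $u<0$ must lie either in the strip $S_\delta:=\Omega\cap\{x\in H:\dist(x,\partial H)<\delta\}$ or in $K$. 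I would handle these two cases with complementary mechanisms.

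\textbf{The strip case.} Here I exploit that $S_\delta$ is a narrow set: its Lebesgue measure, or more precisely the relevant functional-analytic constant governing the anti-symmetric maximum principle, is small when $\delta$ is small. Applying Theorem~\ref{thm:anti-mp} (or rather the argument behind it, adapted to unbounded $\Omega$ but localized to the bounded-in-one-direction set $S_\delta$, using $c^-\leq c_\infty$ to absorb the linear term) forces $u\geq 0$ on $S_\delta$ once $\delta$ is chosen below the threshold $\delta(\Psi,c_\infty)$ coming from that theorem. The point is that anti-symmetry plus $u\geq 0$ on $H\setminus S_\delta$ (which we may assume at this stage of the argument, having not yet treated $K$ — or by running the two cases together via a single minimizing-point analysis) reduces the problem to a genuinely narrow domain. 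One subtlety: the strip $S_\delta$ is unbounded in the directions parallel to $\partial H$, so the $L^p$-bound in Theorem~\ref{thm:anti-mp} does not apply verbatim; I would instead use that $u\geq 0$ outside $B_R(0)$ within the strip (since $\Omega\setminus K\subset B_R(0)$ and $K\subset\Omega$, points of the strip outside $B_R$ lie in $(\Omega\setminus K)^c\cup\ldots$ — more carefully, $S_\delta\setminus B_R\subset(\Omega\setminus K)\setminus B_R=\emptyset$), so $S_\delta\subset B_R(0)$ is in fact bounded with diameter at most $2R$, and Theorem~\ref{thm:anti-mp} applies directly with constants depending only on $\Psi$ and $R$.

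\textbf{The core case.} On $K$ we have $c\geq 0$, so $\Psi(-\Delta)u\leq 0$ in $\{u<0\}\cap K$, i.e. $-u$ is a subsolution of $\Psi(-\Delta)(-u)\geq 0$ there; combined with $u\geq 0$ everywhere in $H$ outside $K$ (having disposed of the strip) and anti-symmetry, the computation preceding Theorem~\ref{thm:anti-mp} shows $-\Psi(-\Delta)v\geq 0$ on $\Sigma:=\{u<0\}\cap K$ with $v=-u^-\mathbf 1_\Sigma$. Since $v\geq 0$, vanishes on $\partial\Sigma$, and satisfies the maximum principle $-\Psi(-\Delta)v\geq 0$ with no reaction term, the strong maximum principle (the first part of Theorem~\ref{thm:anti-mp2}, or directly the mean-value/stochastic representation) forces $v\equiv 0$, i.e. $u\geq 0$ on $K$ as well. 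Putting the two cases together yields $u\geq 0$ throughout $H$, contradicting the assumption. To make this rigorous I would run a single argument: let $m=\inf_H u$; if $m<0$ then the set $\{u=m\}$ (or a minimizing sequence) sits in $S_\delta\cup K$; the strip contribution is killed by $\delta$-smallness and the $K$-contribution by the $c\geq0$ sign, and in the overlap one uses whichever estimate is favorable.

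\textbf{The main obstacle} I anticipate is the bookkeeping needed to run the two regimes simultaneously rather than sequentially: the narrow-domain estimate on $S_\delta$ presupposes $u\geq 0$ on $K$, while the strong-maximum-principle argument on $K$ presupposes $u\geq 0$ on $S_\delta$, so neither is self-contained. The clean way out is to observe that $S_\delta\cup K$-localization still leaves $S_\delta\subset B_R(0)$ bounded, apply Theorem~\ref{thm:anti-mp} to the \emph{whole} of $\Omega\cap B_{R'}(0)$ for $R'$ slightly larger than $R$ (which is bounded and on whose complement in $H$ we do have $u\geq 0$), splitting the reaction term as $c=c^+ -c^-$ and moving $c^+\mathbf 1_K$ to the good side using $c^+\mathbf 1_K=0$; what remains is $c^+\mathbf 1_{S_\delta}$, supported on a set of measure $O(\delta R^{d-1})$, so the $L^\infty$-to-$L^p$ constant times $\|c^+\mathbf 1_{S_\delta}\|$ is $<1$ for $\delta$ small, giving $u\geq 0$ on all of $\Omega\cap B_{R'}(0)$, hence on $H$. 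This reduces the whole lemma to a single invocation of Theorem~\ref{thm:anti-mp} plus the observation $\Omega\setminus K\subset B_R(0)$, and the only real content is choosing $\delta$ so that the bad-set measure beats the universal constant $C(\Psi,R')$.
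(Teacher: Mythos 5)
Your reduction rests on a set-theoretic claim that is false, and the fallback argument you offer in the last paragraph does not repair it. You assert $S_\delta\setminus B_R\subset(\Omega\setminus K)\setminus B_R=\emptyset$, i.e.\ that the strip $S_\delta=\Omega\cap\{\dist(\cdot,\partial H)<\delta\}$ is contained in $B_R(0)$. But the hypothesis $\Omega\setminus K\subset B_R(0)$ only says that points of $\Omega$ outside $B_R(0)$ lie in $K$; nothing prevents $K$ from meeting the strip arbitrarily far out, so $S_\delta\setminus B_R\subset K$ and $S_\delta$ may be unbounded. Consequently the ``single invocation'' of Theorem~\ref{thm:anti-mp} on $\Omega\cap B_{R'}(0)$ also fails: that theorem needs $u\geq0$ on $H\setminus(\Omega\cap B_{R'}(0))$, but on $K\setminus B_{R'}(0)$ (a subset of $\Omega$) no sign of $u$ is assumed — handling exactly that region is the point of the lemma (in its application in Lemma~\ref{lemma41} one takes $K=\Omega$ unbounded, with no a priori sign of $u$ anywhere in $\Omega$). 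So the circularity you flag between the strip case and the core case is not actually resolved. A smaller but real error in the same paragraph: $c\geq 0$ on $K$ gives $c^-1_K=0$, not $c^+1_K=0$; for the function $v$ of \eqref{defi:function-for-anti-mp} the destabilizing coefficient is $c^-$, and even after this correction the remaining inequality $\Psi(-\Delta)v\leq c_\infty 1_{(\Omega\setminus K)\cap\{\dist(\cdot,\partial H)<\delta\}}\,v$ lives on the possibly unbounded set $\{u<0\}\cap\Omega$, where the $L^\infty$--$L^p$ estimate behind Theorem~\ref{thm:anti-mp} (whose constants depend on the diameter of the domain) is not available.

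The paper's proof avoids measure-smallness altogether and works directly at the maximum. One fixes $\delta>0$ so small that $\int_{H'}j(|x-y|)\,dy>c_\infty$ for every $x$ with $\dist(x,H')<2\delta$, where $H'=\{x:\dist(x,H)>\delta\}$ is the reflected half-space pushed distance $\delta$ away (possible since $j\notin L^1$); note $\delta$ depends only on $\Psi$ and $c_\infty$. If $u$ were negative somewhere, $v$ from \eqref{defi:function-for-anti-mp} attains a positive maximum $a$ at some $x\in\{u<0\}\cap\Omega$ (it tends to $0$ at infinity by the $\liminf$ hypothesis), and testing with a smooth $\phi$, $0\leq\phi\leq a$, $\phi\equiv a$ on $H$, $\phi\equiv 0$ on $H'$, gives $c_\infty 1_{\Omega\setminus K}(x)\,a\geq\Psi(-\Delta)\phi(x)\geq a\int_{H'}j(|x-y|)\,dy$. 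This forces $x\in\Omega\setminus K$, and the sign hypotheses then force $\dist(x,\partial H)\leq\delta$, where the kernel mass into $H'$ exceeds $c_\infty$ — a contradiction. If you want to keep your two-regime picture, you would need an ABP-type bound valid on unbounded sets with forcing supported in a small bounded set, which is not in the paper's toolkit; the barrier argument above is the intended route.
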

\begin{proof}
Let $c_{\infty}>0$ be given and denote $H'=\{x\in \R^d\;:\; \dist(x,H)>\delta\}$ for $\delta>0$. Note that we can fix $\delta>0$ such that 
\[
\inf_{x\in \R^d,\ \dist(x,H')<2\delta}\ \int_{H'}j(|x-y|)\ dy>c_{\infty}
\]
since $j\notin L^1(\R^d)$ and hence the above value convergence to infinity for $\delta\to 0$. Next assume that $c,K,u$ are given as stated and that $u$ changes sign in $H$. Moreover, let $v$ be given in \eqref{defi:function-for-anti-mp} and note that $v$ satisfies in viscosity sense
 \[
 \Psi(-\Delta)v\leq c(x)u=-c(x)v\leq c^-(x)v\leq c_{\infty}1_{\Omega\setminus K}v\quad\text{ in $\Sigma:=\{u<0\}\cap \Omega$},
 \]
with $v=0$ in $\R^d\setminus \Sigma$. Moreover, since $\liminf\limits_{x\in H, |x|\to\infty}u(x)\geq0$, we have $\lim\limits_{|x|\to\infty}v(x)=0$ and $v\in \cC_b(\R^d)$. In particular, there is $x\in \Sigma$ with $a:=\max_{\R^d}v=v(x)>0$. Moreover, let $\phi\in \cC^2(\R^d)$ ($0\leq \phi\leq a$) with $\phi\equiv a$ on $H$ and $\phi\equiv 0$ on $H'$. Then $\phi$ touches $v$ from above at $x$ and $\phi\geq v$ in $\R^d$. Thus
\begin{align*}
c_{\infty}1_{\Omega\setminus K}(x)v(x)&=c_{\infty}1_{\Omega\setminus K}(x)a\geq \Psi(-\Delta)\phi(x)=p.v.\int_{\R^d}(\phi(x)-\phi(y))j(|x-y|)\ dy\\
&\geq a\int_{H'}j(|x-y|)\ dy.
\end{align*}
That is, at the maximal point $x$ we have
\[
c_{\infty}1_{\Omega\setminus K}(x)\geq\int_{H'}j(|x-y|)\ dy.
\]
Clearly, from this inequality it follows that we must have $x\in \Omega\setminus K$. Moreover, since $u\geq 0$ in $H\setminus \Omega$ and $\Big(\Omega\cap\{x\in H\;:\; \dist(x,\partial H)\geq \delta\}\Big)\setminus K$ we must have $\dist(x,\partial H)\leq \delta$. But then this is again a contradiction by the choice of $\delta$. Hence $v=0$, and thus $u\geq 0$ in $H$ as claimed.
\end{proof}

\subsection{Proof of Theorem \ref{thm2.1}}

\begin{proof}[Proof of Theorem \ref{thm2.1}]
We follow the idea of moving planes described in the classical case by Serrin \cite{S71} and for the fractional Laplacian, i.e. the case $\Psi(r)=r^{\alpha/2}$, $\alpha\in(0,2)$ in \cite{FJ13}. In the following let $D\subset \R^d$ be a fixed open bounded set with $\cC^2$ boundary and let $u$ be a solution of \eqref{eq:overdetermined} satisfying $\tr_V(u)=c$ on $\partial D$. Note that $u>0$ in $D$ by the maximum principle. Moreover, as explained above, $u$ is H\"older continuous in $\R^d$ \cite{KKLL18} and thus in particular bounded. Given $\lambda\in \R$, $e\in \partial B_1(0)$ denote
\[
v(x)=v_{\lambda,e}(x)=u(x)-u(\bar{x}),\quad x\in \R^d,
\]
where $\bar{x}:=R_{\lambda,e}(x):=x-2(x\cdot e)e+2\lambda e$ denotes the reflection of $x$ at $T_{\lambda,e}:=\partial H_{\lambda,e}$, $H_{\lambda,e}:=\{x\in \R^d\;:\; x\cdot e>\lambda\}$. Note that we have $\R^d\setminus H_{\lambda,e}=H_{-\lambda,-e}$. Moreover, fix $e\in \partial B_1(0)$ and let $\lambda<l:=\sup_{x\in D}x\cdot e$. Then $H\cap D$ is nonempty for all $\lambda<l$ and we put $D_{\lambda}:=R_{\lambda,e}(D\cap H_{\lambda})$. Then for all $\lambda<l$ the function $v$ satisfies in viscosity sense
\begin{equation*}%\label{eq:movingplane}
\left\{\begin{aligned}
\Psi(-\Delta)v&=0&&\text{in $D_{\lambda}$;}\\
v&\geq 0 &&\text{in $H_{-\lambda,-e}\setminus D_{\lambda}$;}\\
v(x)&=-v(\bar{x})&&\text{ for $x\in \R^d$.} 
\end{aligned}\right.
\end{equation*}
Hence we must have $v>0$ in $D_{\lambda}$ or $v\equiv 0$ in $\R^d$ by Theorem~\ref{thm:anti-mp} and \ref{thm:anti-mp2}
for $\lambda$ close to $l$. As we decrease $\lambda$, there are two possible situations that may occur:
\begin{align}
&\text{Situation 1: There is $p_0\in \partial D\cap \partial D_{\lambda}\setminus T_{\lambda,e}$ or}\label{sit1}\\
&\text{Situation 2: $T_{\lambda,e}$ is orthogonal to $\partial D$ at some point $p_0\in \partial D\cap T_{\lambda,e}$.}\label{sit2}
\end{align}

\begin{center}
    	\fbox{
    			\begin{picture}(170,170)
    			\put(15,0){\includegraphics[width=0.3\textwidth,height=0.26\textheight]{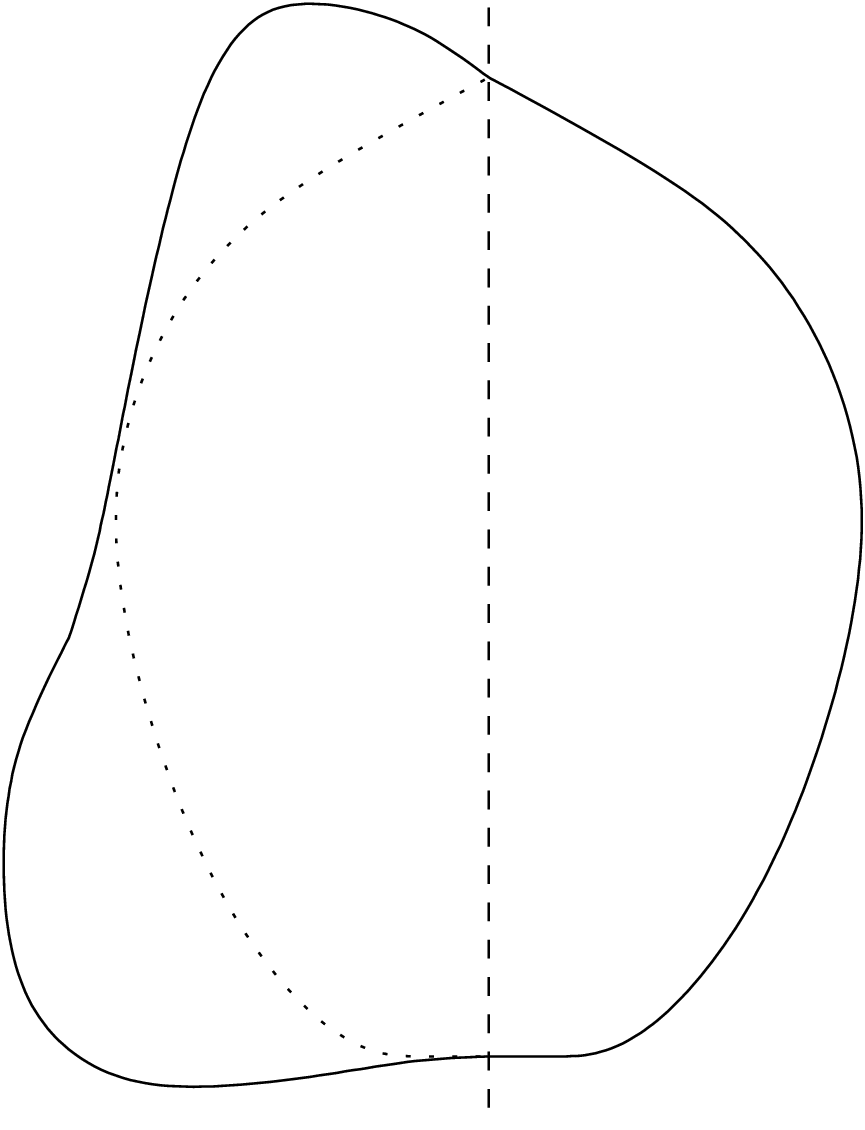}}
    			\put(31.5,97){\huge$\cdot$}
    			\put(19,97){$p_0$}
    			\put(98,1){$\tilde{p}_0$}
    			\put(92,5.5){\huge$\cdot$}
					\put(130,150){$H_{\lambda}$}
					\put(65,80){$D_{\lambda}$}
    			\end{picture}}
    		\captionof{figure}{Situation 1 at $p_0$; Situation 2 at $\tilde{p}_0$.}%\label{figure1}
    	\end{center}
We fix $\lambda_0$ as the maximal value in $(-\infty,l)$ such that one of these situations occur (or, equivalently, the first time while moving $\lambda$ from $l$ to $-\infty$ where one of the two situations occur). Our goal is to show that in either case we have that $D$ is symmetric with respect to $T_{\lambda_0,e}$, which implies the theorem since $e$ was chosen arbitrarily. For this, we show first that we have $v=v_{\lambda_0,e}\equiv 0$ on $\R^d$. In the following we assume $v>0$ in $D_{\lambda_0}$.\\

\textit{Situation 1:} Note that $u(p_0)=0=u(\bar{p}_0)$ and hence $v(p_0)=0$. Hopf's Lemma, Theorem \ref{thm:anti-mp2}, implies 
$0\neq\tr_V(v)(p_0)=\tr_V(u)-\tr_V(u\circ R_{\lambda_0,e})=0$. This is a contradiction and hence we cannot be in Situation 1.

\vspace{.2in}

\textit{Situation 2:} Let $T=T_{\lambda_0,e}$, $H=H_{\lambda_0,e}$, and $R=R_{\lambda_0,e}$. Moreover, let $p_0\in T\cap \partial D$ such that $T$ is orthogonal to $\partial D$ at $p_0$. By translation and rotation, we may assume $\lambda_0=0$, $p_0=0$, $e=e_1$, and $e_2\in T$ is the interior normal at $\partial D$. Note that this implies $\nabla^2\delta_D(0)$ is diagonal. We have
\begin{lemma}\label{lemma:diagonal-decay}
We have
\[
v(t\bar{\eta})=o(V(t)t) \quad\text{as $t\to 0^+$,}
\]
where $\bar{\eta}=e_2-e_1=(-1,1,0,\ldots,0)\in \R^d$.
\end{lemma}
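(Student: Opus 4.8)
We work in the normalized coordinates: $\lambda_0=0$, $p_0=0$, $e=e_1$, $e_2\in T$ is the interior normal to $\partial D$ at $0$, and $T=\{x_1=0\}$. Recall $v=v_{0,e_1}$ is anti-symmetric, nonnegative in $H=\{x_1>0\}$, and satisfies $\Psi(-\Delta)v=0$ in $D_0=R(D\cap H)$. Since $T$ is orthogonal to $\partial D$ at $0$, the boundary $\partial D$ near $0$ is, to second order, the graph $x_2 = \tfrac12\sum_{i\geq 2}\kappa_i x_i^2+o(|x'|^2)$ over the tangent plane $\{x_2=0\}$, so $\delta_D$ is comparable to $x_2$ along the $x_2$-axis and $\nabla^2\delta_D(0)$ is diagonal. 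The direction $\bar\eta=e_2-e_1$ bisects, so $t\bar\eta=(-t,t,0,\ldots,0)$ lies inside $D$ for small $t>0$ (its reflection $t\eta:=R(t\bar\eta)=(t,t,0,\ldots,0)$ lies in $D\cap H$), and $\delta_D(t\bar\eta)\sim t$, hence $\tr_V$-type bounds give $v(t\bar\eta) = u(t\bar\eta)-u(t\eta) \lesssim V(t)$ a priori. The claim sharpens this to $o(V(t)\,t)$.

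**Strategy: barrier from above.** The plan is to bound $v$ from above near $0$ along the ray $\R_+\bar\eta$ by an explicit anti-symmetric barrier whose trace along this ray is $o(V(t)t)$. I would build the barrier out of the torsion function: let $\vartheta_\rho(x)=\Ex_x[\tau_{B_\rho(z_0)}]$ for a suitable ball $B_\rho(z_0)\subset D\cap H$ tangent to $T$, so that by \eqref{eq:ball} $\Psi(-\Delta)\vartheta_\rho=1$ and $\tr_V(\vartheta_\rho)=H_\rho$ is constant on $\partial B_\rho(z_0)$. The key geometric point is that, because $\partial D$ has a \emph{nondegenerate} second-order contact with $T$ at $p_0=0$ (this is exactly the orthogonality in Situation 2 combined with $D$ being $C^2$), one can place a family of such interior balls whose union covers a full neighborhood of $0$ inside $D\cap H$, and on the reflected side a companion family covering a neighborhood of $0$ inside $D_0$; the resulting comparison function $w=a(\vartheta_\rho - \vartheta_\rho\circ R)$ (anti-symmetric, with $\Psi(-\Delta)w = 0$ on the overlap up to an $O(a)$ error) then dominates $v$ there by the anti-symmetric maximum principle Theorem~\ref{thm:anti-mp} / Theorem~\ref{thm:anti-mp2}, provided $a$ is chosen correctly. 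Since along $t\bar\eta$ one has $\vartheta_\rho(t\bar\eta)\sim V(\delta_{B_\rho}(t\bar\eta))^?$ — more precisely $\vartheta_\rho\approx V\circ\delta_{B_\rho}$ by \cite[Theorem~4.6]{BGR15} — and $\delta_{B_\rho}(t\bar\eta)=o(t)$ as the ball is tangent to $T$ at a point the ray approaches transversally, one gets $w(t\bar\eta) = o(V(t))$; an extra factor $t$ is squeezed out by optimizing the ball radius $\rho=\rho(t)\to 0$. This is how the $o(V(t)\,t)$ gain appears.

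**Execution order.** First, set up the geometry: choose, for each small $s>0$, an interior ball $B_s\subset D\cap H$ of radius $r(s)$ tangent to $\partial D$ at a point $q(s)\in\partial D$ near $0$ and tangent to (or close to) $T$, arranged so that $t\bar\eta\in R(B_s)$ for a matched scale $t=t(s)$ and $\delta_{R(B_s)}(t\bar\eta)/t\to 0$. Second, estimate the torsion function: use $u_r\approx V(\delta)$ from \eqref{eq:ball} together with the explicit behavior of $u_r$ near $\partial B_r$ to get $\vartheta_{r(s)}(t\bar\eta)\leq C\, V(t)\, t \cdot o(1)$ along the matched ray. Third, run the comparison: $v_a := v - a(\vartheta - \vartheta\circ R)$ is, by \cite[Lemma~5.8]{CS09}, an anti-symmetric super-solution of $\Psi(-\Delta)v_a = -O(a)$ on the relevant set; since $v$ is already known positive in $D_0$ with positive trace on $\partial D_0\setminus T$ by Hopf (Theorem~\ref{thm:anti-mp2}), for suitable $a$ one has $v_a\geq 0$ on the boundary of the comparison domain, hence $v\geq a(\vartheta-\vartheta\circ R)$ fails to be the inequality we want — rather, we want the \emph{reverse}, so instead I compare $v$ from above: show $u(\bar x)\geq a\vartheta(\bar x)$-type lower bounds force $v(x)=u(x)-u(\bar x)$ to inherit the decay of $u$ near $\partial D$, i.e. $v(t\bar\eta)\leq u(t\bar\eta)\leq C V(\delta_D(t\bar\eta))$ refined to $o(V(t)t)$ using that $\delta_D(t\bar\eta)=o(t)$? — no: $\delta_D(t\bar\eta)\sim t$. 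The actual gain must come from cancellation between $u(t\bar\eta)$ and $u(t\eta)$: since $t\bar\eta$ and $t\eta=R(t\bar\eta)$ are both at distance $\sim t$ from $\partial D$ and the boundary has a \emph{horizontal tangent direction} $e_2$ at $0$ with $T$ orthogonal, the points $t\bar\eta,t\eta$ are reflections of each other across $T$ but both ``equidistant'' from $\partial D$ to leading order; the difference of the Hölder-continuous function $u/V(\delta_D)$ at these two points is $O(|t\bar\eta - t\eta|^\beta)=O(t^\beta)$, and more sharply, using that $u/V(\delta_D)$ is $C^1$-like and $\tr_V(u)\equiv c$ is \emph{constant}, the leading term of $u(x)-c\,V(\delta_D(x))$ is higher order, so $v(t\bar\eta)= c(V(\delta_D(t\bar\eta))-V(\delta_D(t\eta))) + o(\cdots)$, and $\delta_D(t\bar\eta)-\delta_D(t\eta)=o(t)$ because $\nabla^2\delta_D(0)$ is diagonal (the mixed $x_1x_2$ second derivative vanishes, so to second order $\delta_D$ is even in $x_1$ along this slice). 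Differentiating $V$, $V(\delta_D(t\bar\eta))-V(\delta_D(t\eta)) = V'(\delta_D(0)+\cdots)\cdot o(t)$, and since $V'(r)\sim V(r)/r$ this is $o(V(t))$; one more order from the $o(t)$ being in fact $O(t^2)$ gives $o(V(t)t)$ — wait, that would give $O(V(t)t)$, and to get strict $o$ one uses that the $O(t^2)$ coefficient is itself the mixed Hessian entry which is exactly $0$, pushing to $O(t^3)=o(t^2)$, hence $o(V(t)t)$.

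**Main obstacle.** The delicate point is making rigorous the expansion ``$u(x)=c\,V(\delta_D(x)) + o(V(\delta_D(x)))$ with controlled error in the transversal direction.'' The Hölder continuity of $u/V(\delta_D)$ from \cite[Theorem~2.2]{KKLL18} gives $|u(x)/V(\delta_D(x)) - c| \leq C\,\mathrm{dist}(x,\partial D)^\beta$-type decay near $p_0$, which yields $v(t\bar\eta) = O(t^{\beta})\cdot V(t)$ — not quite $o(V(t)t)$ unless $\beta\geq 1$, which need not hold. So the honest route is the \emph{barrier} route of the second paragraph: one really must construct the anti-symmetric torsion barrier and exploit that its $\tr_V$ along the bisecting ray vanishes faster than $V(t)t$ because the barrier ball touches $T$ tangentially, and the diagonality of $\nabla^2\delta_D(0)$ guarantees the barrier can be fitted symmetrically on both sides so that the error term in $\Psi(-\Delta)$ of the anti-symmetric combination is $O(a)$ and harmless. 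Carefully quantifying $\delta_{B_\rho}(t\bar\eta) = \tfrac{c}{\rho}t^2 + O(t^3)$ and optimizing $\rho\sim t^{1/2}$ to get $\delta_{B_\rho}(t\bar\eta)\sim t^{3/2}$, then $V(t^{3/2}) \approx t^{3a_2/2}$-ish versus $V(t)t\approx t^{1+a_1}$, and checking $3a_2/2 > 1+a_1$ — this inequality need \emph{not} hold for all admissible $a_1,a_2$, so in fact one does not optimize $\rho$ but keeps $\rho$ fixed small and instead extracts the extra $t$ from the \emph{difference} of the two torsion functions at $t\bar\eta$ versus $t\eta$, which are $O(t^2)$ apart (not $O(t)$) precisely because the configuration is symmetric about $T$ to first order. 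That $O(t^2)$-vs-$O(t)$ gap, traced through $V'(\delta)\sim V(\delta)/\delta$, is the source of the $o(V(t)t)$ and is the step requiring the most care; everything else is the standard anti-symmetric comparison already packaged in Theorems~\ref{thm:anti-mp}--\ref{thm:anti-mp2}.
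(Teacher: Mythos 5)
Your proposal never settles on a complete argument, and that is the essential gap. The paper's proof of Lemma~\ref{lemma:diagonal-decay} is exactly the ``cancellation'' computation you sketch in the middle of your execution paragraph and then abandon: write $u=V(\delta_D)\,\psi$ with $\psi\in \cC^{\alpha}(\overline D)$ by \cite[Theorem 2.2]{KKLL18} and $\psi=c$ on $\partial D$ (constancy of $\tr_V(u)$); expand $\delta_D$ to second order at $0$ and use that $\nabla^2\delta_D(0)$ is diagonal, so the quadratic terms at $t\bar\eta$ and $t\eta$ coincide and $\delta_D(t\bar\eta)-\delta_D(t\eta)=o(t^2)$; then the mean value theorem together with $V'(r)\asymp V(r)/r$ (from \cite{GKK15}, \cite[Lemma 2.5]{KKLL18}) gives $V(\delta_D(t\bar\eta))-V(\delta_D(t\eta))=o(V'(t)t^2)=o(V(t)t)$. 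There is no barrier, no family of tangent balls, and no optimization of a radius $\rho(t)$ anywhere in the paper's argument; the extra factor of $t$ comes solely from the exact cancellation of the second-order terms of $\delta_D$ (your ``$O(t^3)$'' refinement is not needed: $o(t^2)$ already suffices once fed through $V'$).

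Having sketched this, you reject it (on the grounds that H\"older continuity of $\psi$ only controls $\psi(t\bar\eta)-\psi(t\eta)$ by $O(t^{\beta})$) and declare the ``honest route'' to be the anti-symmetric torsion barrier — but that route is never executed and cannot deliver the statement in the form you describe. Theorems~\ref{thm:anti-mp} and \ref{thm:anti-mp2} are comparison tools that bound anti-symmetric \emph{super}solutions from \emph{below}; they are precisely what powers the complementary lower bound of Lemma~\ref{lemma:cornerpoint-hopf}, $w(t\bar\eta)\geq CV(t)t$. To obtain the \emph{upper} bound $v(t\bar\eta)=o(V(t)t)$ by comparison you would need an anti-symmetric barrier dominating $v$ on all of $H\setminus K$ for the comparison region $K$, and since $v$ is of order $V(\delta_D)$ in the bulk of $D_{\lambda_0}$ such a barrier cannot simultaneously be $o(V(t)t)$ along the ray $t\bar\eta$; your own quantitative attempt ($\rho\sim t^{1/2}$, requiring $3a_2/2>1+a_1$) is, as you concede, false for general admissible $\Psi$, and the ``$O(t^2)$-vs-$O(t)$ gap for the two torsion functions'' is asserted, not proved, and in any case is not transferred to $v$. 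To be fair, the obstacle you raise does point at the tersest step of the paper's proof: the displayed identity $v(t\bar\eta)=[V(\delta_D(t\bar\eta))-V(\delta_D(t\eta))](c+o(1))$ silently absorbs the cross term $V(\delta_D(t\eta))\,(\psi(t\bar\eta)-\psi(t\eta))$, whose smallness relative to $V(t)t$ is not discussed; but a correct write-up must either justify that term (e.g.\ via finer boundary regularity of $u/V(\delta_D)$ and the vanishing of its tangential derivative forced by $\tr_V(u)\equiv c$) or supply a genuinely complete alternative — your proposal does neither.
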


\begin{lemma}\label{lemma:cornerpoint-hopf}
Let $\Omega\subset \R^d$, $d\geq 2$, be an open set such that $0\in \partial \Omega$ and $\{x_1=0\}$ is orthogonal to $\partial \Omega$ at $0$. Moreover, let $\Omega$ be symmetric about the hyperplane $\{x_1=0\}$ and  there is a ball $B\subset\Omega$ with $\overline{B}\cap \partial\overline{\Omega}=\{0\}$. Denote $D^{\ast}:=\Omega\cap \{x_1<0\}$ and assume $w\in \cC_{b}(\R^d)$ satisfies
\[%\label{lemma:cornerpoint-hopf:eq1}
\left\{\begin{aligned}
\Psi(-\Delta)w+c(x)w&\geq 0 &&\text{in $D^{\ast}$,}\\
w&\geq 0&&\text{in $\{x_1<0\}$,}\\
w&>0 &&\text{in $D^{\ast}$;}\\
w(x)=w(x_1,x')&=-w(-x_1,x')&&\text{ for $x=(x_1,x')$, $x_1\in \R$, $x'\in \R^{d-1}$}
\end{aligned}\right.
\]
in viscosity sense, where $c\in L^\infty(D^{\ast})$. Let $\bar{\eta}=e_2-e_1=(-1,1,0,\ldots,0)\in \R^d$, then there is $C,t_0>0,$ depending on 
$D^{\ast}$, $N$, $\Psi$, such that
\[
w(t\bar{\eta})\geq C V(t)t\quad\text{ for all $t\in(0,t_0)$.}
\]
\end{lemma}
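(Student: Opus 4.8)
The plan is to construct an explicit anti-symmetric subsolution that captures the $V(t)t$ decay rate along the diagonal direction $\bar\eta$, and then compare it against $w$ from below via the anti-symmetric maximum principle Theorem \ref{thm:anti-mp}. The natural candidate is built from the torsion-type function on a ball: let $B\subset\Omega$ be the interior ball touching $\partial\overline\Omega$ exactly at $0$, and set $\vartheta(x)=\Ex_x[\tau_B]$, which solves $\Psi(-\Delta)\vartheta=1$ in $B$, vanishes outside, and satisfies $\vartheta\asymp V(\delta_B)$ by the results quoted in the introduction (via \cite{BGR15,KKLL18}). The antisymmetrized function $\Theta(x):=\vartheta(x)-\vartheta(Rx)$, where $R$ is reflection in $\{x_1=0\}$, is then anti-symmetric, satisfies $\Psi(-\Delta)\Theta=0$ in $B\cap\{x_1<0\}$ (the defining equation $=1$ cancels on reflection), $\Theta>0$ in $B\cap\{x_1<0\}$, and $\Theta\geq0$ in $\{x_1<0\}$. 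The key geometric input is that, because $B$ touches $\partial\Omega$ at the origin and the tangent plane $T=\{x_1=0\}$ is orthogonal to $\partial\Omega$ there, the diagonal ray $t\bar\eta$ enters $B\cap\{x_1<0\}$ for small $t>0$ with $\delta_B(t\bar\eta)\asymp t$; one then needs the quantitative lower bound $\Theta(t\bar\eta)\gtrsim V(t)\,t$. The factor $t$ (beyond the boundary factor $V(t)$) comes from the difference $\vartheta(t\bar\eta)-\vartheta(R(t\bar\eta))=\vartheta(t\bar\eta)-\vartheta(t(e_1+e_2))$: both points are at comparable distance $\asymp t$ from $\partial B$, but the first is strictly interior while the second is essentially on $\partial B$ up to second order in $t$ (since $T$ is orthogonal to $\partial B$ at $0$), so a Taylor/mean-value estimate on $r\mapsto V(r)$ combined with $V'(r)r\asymp V(r)$ (from the scaling \eqref{assumption1}) yields the extra linear gain.

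The steps, in order, are: (i) fix coordinates so that $0\in\partial\Omega$, $T=\{x_1=0\}$ is orthogonal to $\partial\Omega$ at $0$, and $\overline B\cap\partial\overline\Omega=\{0\}$; compute $\delta_B$ near the origin along the diagonal and verify $\delta_B(t\bar\eta)=t+o(t)$ while $\delta_B(R(t\bar\eta))=o(t^2)$ or at least $\delta_B(R(t\bar\eta))\le \delta_B(t\bar\eta)$ with a strict gap of order $t$; (ii) use $\vartheta\asymp V\circ\delta_B$ together with the monotonicity/scaling of $V$ to get $\vartheta(t\bar\eta)-\vartheta(R(t\bar\eta))\gtrsim V(t)\,t$, after checking that the gap in $\delta_B$ is not destroyed by the two-sided-but-not-pointwise nature of the comparison $\vartheta\asymp V\circ\delta_B$ — this is the delicate point and may require instead working directly with the probabilistic representation or with the gradient estimate $|\nabla\vartheta|\lesssim V(\delta_B)/\delta_B$ to estimate $\vartheta(t\bar\eta)-\vartheta(R(t\bar\eta))$ as an integral of $\partial_1\vartheta$ along the segment joining the two points, where $\partial_1\vartheta>0$ with the right size; (iii) set $w_0:=\epsilon\,\Theta$ for small $\epsilon>0$ and show $w\geq w_0$ in $\{x_1<0\}$: since $w>0$ on the compact set $\overline{B_\rho(x_*)}\Subset D^\ast$ for a suitable interior ball, choose $\epsilon$ so that $w\geq w_0$ there; then on the remaining region $D^\ast\setminus\overline{B_\rho(x_*)}$ the difference $w-w_0$ is an anti-symmetric supersolution of $\Psi(-\Delta)(w-w_0)+c(x)(w-w_0)\geq -\|c\|_\infty w_0\ge 0$... actually one arranges the geometry so that $w-w_0\ge0$ on the boundary of a thin set and applies Theorem \ref{thm:anti-mp} (narrow-domain maximum principle) on the thin slab $\{x_1\in(-\delta,0)\}\cap D^\ast$, using that $\Theta$ already handles the bulk; (iv) conclude $w(t\bar\eta)\ge w_0(t\bar\eta)=\epsilon\,\Theta(t\bar\eta)\ge C\,V(t)\,t$ for $t\in(0,t_0)$.

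The main obstacle I expect is step (ii): extracting the \emph{extra} linear factor $t$ in $\Theta(t\bar\eta)$, on top of the expected boundary behavior $V(\delta_B)\asymp V(t)$. A pointwise two-sided bound $\vartheta\asymp V\circ\delta_B$ alone is too crude — the two points $t\bar\eta$ and $R(t\bar\eta)$ have comparable distances to $\partial B$, so their $V\circ\delta_B$ values are comparable and their difference could a priori be of the same order as either one, not smaller. The resolution must exploit more refined information: either the fact that $R(t\bar\eta)$ is at distance $o(t^2)$ from $\partial B$ (so $\vartheta(R(t\bar\eta))\asymp V(o(t^2))$ is genuinely smaller, being $\lesssim (V(t^2)) = o(V(t))$ by scaling) — this works if the orthogonality of $T$ to $\partial B$ forces the reflected diagonal point onto $\partial B$ to second order — or, if that is not quite true, a direct gradient estimate for $\vartheta$ (available from \cite{KKLL18} or boundary Harnack-type results) integrated along the short segment from $R(t\bar\eta)$ to $t\bar\eta$. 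This is exactly the nonlocal analogue of the classical corner-point Hopf lemma where one differentiates twice, and it is the technical heart of the argument; the comparison steps (iii)–(iv) are then standard given the earlier maximum principles.
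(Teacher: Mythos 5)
Your overall architecture (build an explicit anti-symmetric subsolution with the $tV(t)$ rate along the diagonal, then compare with $w$ via the narrow-region anti-symmetric maximum principle, Theorem \ref{thm:anti-mp}, funded by the positivity of $w$ on a compact subset of $D^{\ast}$) matches the paper's strategy, but your candidate barrier is identically zero, which is fatal. Since $B\subset\Omega$ and $\overline{B}\cap\partial\overline{\Omega}=\{0\}$, the center of $B$ lies on the inward normal to $\partial\Omega$ at $0$; by the hypothesis that $\{x_1=0\}$ is orthogonal to $\partial\Omega$ at $0$, this normal lies \emph{inside} the hyperplane $\{x_1=0\}$ (in the paper's coordinates $B=B_R(Re_2)$). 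Hence $B$ is symmetric under the reflection $R=R_{0,e_1}$, its torsion function $\vartheta=\Ex_\cdot[\tau_B]$ satisfies $\vartheta\circ R=\vartheta$, and your $\Theta=\vartheta-\vartheta\circ R\equiv 0$; in particular it is not positive in $B\cap\{x_1<0\}$ and cannot serve as a barrier. For the same reason your key geometric claim fails: $\delta_B(t\bar\eta)=\delta_B(R(t\bar\eta))$ \emph{exactly} (both points are equidistant from the center of $B$), so there is no gap of order $t$, let alone $\delta_B(R(t\bar\eta))=o(t^2)$. You appear to be transferring the computation of Lemma \ref{lemma:diagonal-decay}, which concerns the \emph{non-symmetric} original domain $D$ and yields an \emph{upper} bound $o(V(t)t)$, to the symmetric ball, where it degenerates. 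No refinement via gradient estimates or the probabilistic representation can rescue this, because the function you are estimating vanishes identically.

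The extra factor $t$ has to be inserted by hand, not extracted from a reflected difference. The paper's barrier is $h(x)=-\kappa\, x_1\, g(x)+\delta\bigl(\phi(x)-\phi(R_{0,e_1}(x))\bigr)$, where $g$ is the torsion function of the (symmetric) ball $B$, the explicit anti-symmetric linear factor $-x_1$ supplies the gain $h(t\bar\eta)=\kappa t\,g(t\bar\eta)\gtrsim \kappa\, t\,V(t)$, and the compactly supported anti-symmetric bump $\delta(\phi-\phi\circ R_{0,e_1})$, with $\supp\phi\subset M_1\Subset D^{\ast}$ where $w\geq\delta>0$, contributes a strictly negative amount $-\delta C_1$ to $\Psi(-\Delta)h$ in $K=B\cap\{x_1<0\}$, absorbing both the zeroth-order term $c(x)h$ and the error from the product. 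The technical heart, which your plan has no counterpart for, is the viscosity-sense bound $\Psi(-\Delta)(x_1 g)\leq C_2$ in $K$: $x_1g$ is not smooth, and the proof uses the regularity estimate $|g(y)-g(x)|\lesssim \Psi(|x-y|^{-2})^{-1/2}$ from \cite{KKLL18}, the bound \eqref{J-V}, and the integrability $\int_0^1\Psi(r^{-2})^{1/2}\,dr<\infty$ guaranteed by $a_2<1$ in \eqref{assumption1}. With $h$ so constructed, $w-h\geq 0$ on $\{x_1<0\}\setminus K$ and Theorem \ref{thm:anti-mp} applied in $K$ (with $|K|$ small) gives $w\geq h$ there, whence the claim. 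You would need to replace your step (ii) entirely by an argument of this type.
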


Combining Lemma \ref{lemma:diagonal-decay} and \ref{lemma:cornerpoint-hopf} (see proofs below), with $\Omega=D\cap \overline{H}\cup D_{\lambda_0}$ and $D^{\ast}=D_{\lambda_0}$, it follows that we cannot have $v>0$ in 
Situation 2.

Thus, we must have $v\equiv 0$ in $\R^d$. From here, the claim follows analogously to the proof of Theorem 1.1 in \cite{FJ13} (see page 9 there).
\end{proof}

\begin{proof}[Proof of Lemma \ref{lemma:diagonal-decay}]
The proof is along the lines of \cite[Lemma 4.1]{JKS25}. We use \cite[Theorem 2.2]{KKLL18}, which gives a function $\psi\in C^{\alpha}(\overline{D})$ for some $\alpha>0$ such that
\[
u(x)=V(\delta_D(x))\psi(x)\quad\text{ for $x\in \R^d$.}
\]
Moreover, by assumption
\[
\psi(x)=c\quad\text{ for $x\in \partial D$.}
\]
Let $\bar{\eta}=e_2-e_1$, $\eta=e_2+e_1$ and in the following we let  $t$ to be small enough such that $t\bar{\eta},t\eta\in D$. Then
\[
\psi(t\bar{\eta})=c+o(1)=\psi(t\eta)\quad \text{ as $t\to 0^+$.}
\]
But then, using \eqref{corrected assumption}, we have
\begin{equation}\label{lemma:diagonal-decay:eq1}
	\begin{split}
		v(t\bar{\eta})=u(t\bar{\eta})-\bar{u}(t\bar{\eta})&=[V(\delta_D(t\bar{\eta}))-V(\delta_D(t\eta))]\psi(t\bar{\eta}) +V(\delta_D(t\eta))[\psi(t\bar{\eta})-\psi(t\eta)]\\
		&=[V(\delta_D(t\bar{\eta}))-V(\delta_D(t\eta))](c+o(1))+o(V(t)t),
		\quad\text{ as $t\to 0$.}
	\end{split}
\end{equation}
Moreover, we have
\begin{align*}
\delta_D(t\bar{\eta})&=\delta_D(0)+ t\nabla \delta_D(0)\cdot\bar{\eta}+\frac{t^2}{2}\nabla^2\delta_D(0)[\bar{\eta}]\cdot\bar{\eta}+o(t^2)\\
&=t e_2\cdot\bar{\eta}+\frac{t^2}{2}\nabla^2\delta_D(0)[\bar{\eta}]\cdot\bar{\eta}+o(t^2)=t+\frac{t^2}{2}C+o(t^2)\quad\text{as $t\to0^+$,}
\end{align*}
where $C=\nabla^2\delta_D(0)[e_2]\cdot e_2+\nabla^2\delta_D(0)[e_1]\cdot e_1$, and similarly
\[
\delta_D(t \eta)=t+\frac{t^2}{2}C+o(t^2)\quad\text{as $t\to0^+$.}
\]
Thus we have $\delta_D(t\bar{\eta})-\delta_D(t\eta)=o(t^2)$ for $t\to 0^+$ and hence, for some $\tau\in (0,1)$, the mean value theorem gives
\begin{align*}
V(\delta_D(t\bar{\eta}))-V(\delta_D(t\eta))&=V'\Big(\delta_D(t\eta)+\tau (\delta_D(t\bar{\eta})-\delta_D(t\eta))\Big)\Big(\delta_D(t\bar{\eta})-\delta_D(t\eta)\Big)\\
&=o(V'(t)t^2)=o(V(t)t)\quad \text{ as $t\to 0^+$.}
\end{align*}
where in the last step we used \cite[Proposition 3.1]{GKK15} (see also \cite[Lemma 2.5]{KKLL18}). 
Combining this with \eqref{lemma:diagonal-decay:eq1} the claim follows.
\end{proof}

\begin{proof}[Proof of Lemma \ref{lemma:cornerpoint-hopf}]
By assumption, we can fix a ball $B=B_R(Re_2)\subset \Omega$ for some $R>0$ small enough with $\partial B\cap \partial \Omega=\{0\}$. We put
\[
K:=B\cap\{x_1<0\}.
\]
\begin{center}
\fbox{
    			\begin{picture}(170,170)
    			\put(20,0){\includegraphics[width=0.3\textwidth,height=0.26\textheight]{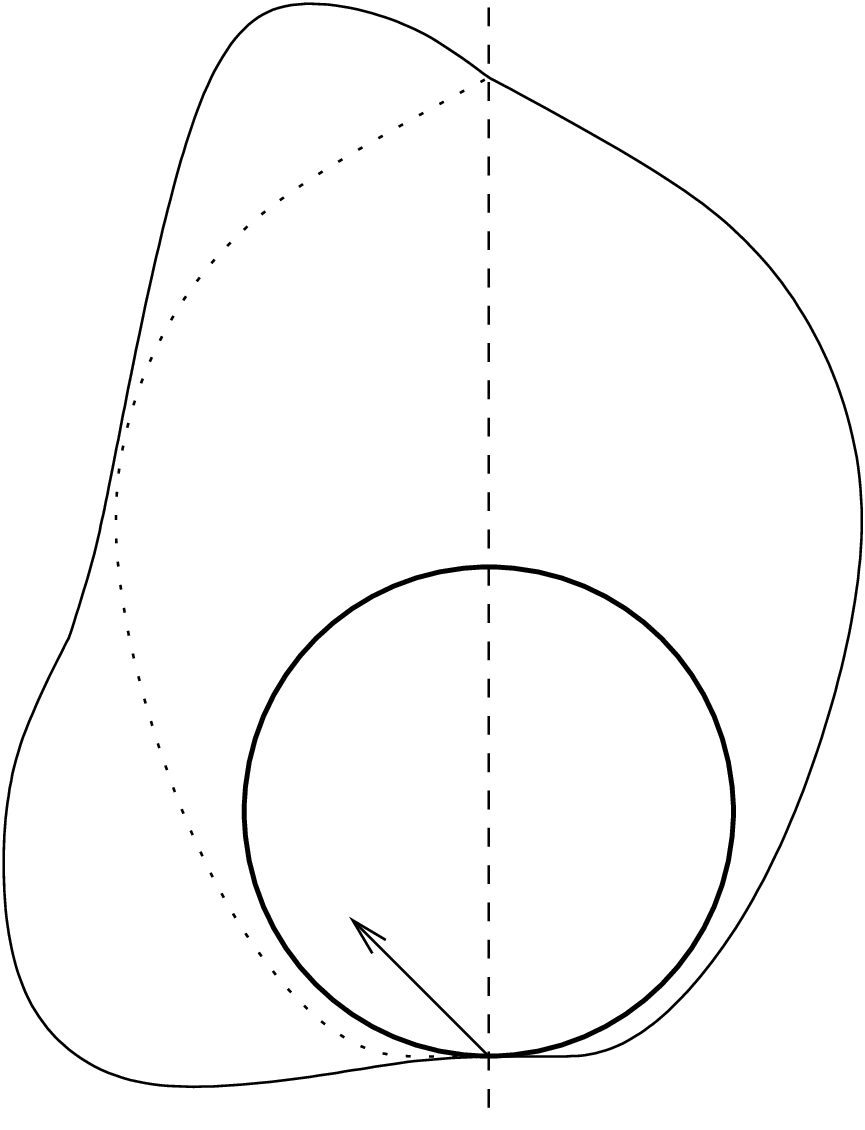}}
    			%\put(34,93){\huge$\cdot$}
    			%\put(22,97){$p_0$}
    			\put(103,1){$0$}
    			\put(96.85,5.7){\huge$\cdot$}
					\put(130,150){$\{x_1>0\}$}
					\put(59,85){$D^{\ast}$}
					\put(75,55){$K$}
					\put(96.85,40){\huge$\cdot$}
					\put(74,35){$\bar{\eta}$}
    			\end{picture}}
    		%\captionof{figure}{After translation, }
    	\end{center}
Moreover, let $M_1\Subset D^{\ast}$ such that $\delta=\underset{M_1}{\essinf}\ w>0$ and let $M_2=R_{0,e_1}(M_1)$. Note that we may assume that $M_1$ is an open ball. Moreover, by making $R$ smaller, we may assume that $\dist(M_1,K)>0$. Moreover, by making $R$ even smaller if necessary, we may also assume that $|K|$ is small enough with respect to $\|c\|_{L^{\infty}(\Omega)}$ to apply the second assertion of Theorem \ref{thm:anti-mp}. Let $g$ be the unique (viscosity) solution of 
\[
\left\{\begin{aligned}
\Psi(-\Delta)g&=1&&\text{in $B$;}\\
g&=0 &&\text{in $\R^d\setminus B$.}\\
\end{aligned}\right.
\]
Note that $g(x)=u_R(x-e_2)$ with $u_R$ satisfying \eqref{eq:ball} from the introduction. Let $\phi\in C^{\infty}_c(\R^d)$ with $\supp\ \phi\subset M_1$, $0\leq \phi\leq 1$ and there is $U\Subset M_1$, $|U|>0$ such that $\phi=1$ in $U$. We claim that there is $C>0$ such that the bounded continuous function
\[
x\mapsto h(x)=-\kappa x_1g(x)+\delta \phi(x)-\delta \phi(R_{0,e_1}(x))
\]
satisfies for some fixed $\kappa>0$, to be chosen later,
\begin{equation}\label{lemma:cornerpoint-hopf:eq2}
\Psi(-\Delta)h+c(x)h\leq 0\quad \text{ in $K$},
\end{equation}
in viscosity sense. Having shown this, and noting that by construction $w-h$ is anti-symmetric bounded and continuous with $w-h\geq 0$ on $\{x_1<0\}\setminus K$, the maximum principle for anti-symmetric functions, Theorem \ref{thm:anti-mp} (in $K$), implies
\[
w(t\bar{\eta})\geq h(t\bar{\eta})=\kappa tV(t)\quad \text{for $t>0$ small enough,}
\]
as claimed. It remains to show \eqref{lemma:cornerpoint-hopf:eq2}. For this, let $x\in K$ and we write
\[
\Psi(-\Delta)h(x)=-\kappa\Psi(-\Delta)(x_1g)(x)+ \delta \Psi(-\Delta)[\phi-\phi \circ R_{0,e_1}](x).
\]
For the second term (which can be computed classically, since it is smooth in $K$) note that 
\begin{align*}
\Psi(-\Delta)[\phi-\phi \circ R_{0,e_1}](x)&=-\int_{M_1}\phi(y)j(|x-y|)-j(|x-R_{0,e_1}(y)|)\ dy\\
&\leq -\int_{U}\Big(j(|x-y|)-j(|R_{0,e_1}(x)-y|)\ dy\leq -C_1,
\end{align*}
where $C_1>0$ is a constant depending only on $K$, $U$, and $j$. And for the first term we show that
for some constant $C_2$ we have
\begin{equation}\label{A10}
\Psidel (x_1g(x))\leq C_2\quad \text{in}\; K,
\end{equation}
in viscosity sense. Let $x$ be a point in $K$ and $\psi$ be a $\cC^2$ function that touches $-y_1 g(y)$ from above at $x$. Let $B_x\subset B_1(0)$ be 
a ball around $x$ satisfying
$B_x\Subset K$. Let
\[
\Phi(y)=\left\{\begin{array}{ll}
\psi(y) & \text{for}\; y\in B_x,
\\
-y_1g(y) & \text{otherwise}.
\end{array}
\right.
\]
Thus to establish \eqref{A10} we need to show that
$$\Psidel \Phi(x) \leq C_2.$$
Let us also define 
\[
\widehat\Phi(y)=\left\{\begin{array}{ll}
-\frac{1}{y_1}\psi(y) & \text{for}\; y\in B_x,
\\
g(y) & \text{otherwise}.
\end{array}
\right.
\]
Now by \cite[Theorem 1.1]{KKLL18} we have 
\[
|g(y)-g(x)|\leq C_4 \chi(|x-y|)\quad\text{ for $x,y\in \R^d$}, \quad \text{where}\quad \chi(r) = \Psi(r^{-2})^{-\frac{1}{2}},
\]
and $C_4>0$ depending on $d$, $B$, and $\Psi$. Observe that, by \eqref{assumption1}, $\chi(r)\leq C_5 r^{a_1}$ for all $r<1$. Thus 
$g$ is $a_1$-H\"{o}lder continuous in $\R^d$.  Let 
\[
\Phi'(y)=\left\{\begin{array}{ll}
(-\frac{1}{y_1}\psi(y))\wedge(g(x) + C_4 \chi(|x-y|)) & \text{for}\; y\in B_x
\\
g(y) & \text{otherwise}.
\end{array}
\right.
\]
Note that $\Phi'$ touches $g$ from above at $x$ and when $y$ is very close to $x$ we have $\widehat\Phi(y)=\Phi'(y)$. This is possible 
since $\chi(r)\gtrsim r^{a_2}$ where $a_2<1$.
Since $g$ is a viscosity solution we have $\Psidel \Phi'(x)\leq 1$. On the other hand $\Phi(y)\geq -y_1\Phi'(y):=\Phi{''}(y)$ in $\R^d$.
Let us now compute

\begin{align*}
\Psi(-\Delta)(\Phi)(x)&\leq \Psi(-\Delta)(\Phi^{''})(x)\\
&=\int_{\R^d}(-x_1\Phi^{'}(x)+(x_1+y_1)\Phi^{'}(x+y)-1_{\{|y|\leq 1\}}y\cdot \nabla [x_1\Phi^{'}(x)])j(|y|)\ dy\\
&=-x_1\Psi(-\Delta)\Phi^{'} (x)+\int_{\R^d}(y_1\Phi^{'}(x+y)-1_{\{|y|\leq 1\}}y_1\Phi^{'}(x))j(|y|)\ dy\\
&\leq -x_1+\int_{B_1(0)}(\Phi^{'}(x+y)-\Phi^{'}(x))y_1j(|y|)\ dy + \int_{\R^d\setminus B_1(0)} y_1 g(x+y)j(|y|)\ dy\\
&\leq C_3+ \int_{B_1(0)}(\Phi^{'}(x+y)-g(x))y_1j(|y|)\ dy\ dy+C_3,
\end{align*}
Since $\Phi'$ touches $g$ from above it follows that $|\Phi^{'}(x+y)-g(x)|\leq C_4 \chi(|y|)$ and therefore,
using \eqref{J-V} we have
\begin{align*}
&\int_{B_1(0)}(g(x+y)-g(x))|y| j(|y|)\ dy\leq C_5 \int_{B_1(0)} \Psi(|y|^{-2})^{\frac{1}{2}}|y|^{1-d}\ dy\leq C_5\int_0^{1} \Psi(r^{-2})^{\frac{1}{2}}\ dr=C_6,
\end{align*}
where $C_5, C_6>0$ is a constant depending on $d$, $B$, and $\Psi$ and the finiteness follows from \eqref{assumption1} and the fact that $a_2<1$. Hence \eqref{A10} holds. Next, let $\kappa\leq \frac{\delta C_1}{C_2 +\|c\|_{L^{\infty}(D^{\ast})} \|g\|_{L^{\infty}(B)}}$, then for $x\in K$ we have
\begin{align*}
\Psi(-\Delta)h(x)\leq \kappa C_2-\delta C_1\leq -\|c\|_{L^{\infty}(D^{\ast})}\kappa \|g\|_{L^{\infty}(B)}\leq -c(x)h(x)
\end{align*}
as claimed in \eqref{lemma:cornerpoint-hopf:eq2}.
\end{proof}

\subsection{Proof of Theorem \ref{thm2.2}}

\begin{proof}[Proof of Theorem \ref{thm2.2}]
As in the proof of Theorem \ref{thm2.1}, given $\lambda\in \R$, $e\in \partial B_1(0)$ denote
\[
v(x)=v_{\lambda,e}(x)=u(x)-u(\bar{x}),\quad x\in \R^d,
\]
where $\bar{x}:=R_{\lambda,e}(x):=x-2(x\cdot e)e+2\lambda e$ denotes the reflection of $x$ at $T_{\lambda,e}:=\partial H_{\lambda,e}$, $H_{\lambda,e}:=\{x\in \R^d\;:\; x\cdot e>\lambda\}$.  Moreover, fix $e\in \partial B_1(0)$ and let $\lambda<l:=\sup_{x\in D}x\cdot e$. Then $H_{\lambda,e}\cap D$ is nonempty for all $\lambda<l$ and we put $D_{\lambda}:=R_{\lambda,e}(D\cap H_{\lambda})\subset H_{-\lambda,-e}$. Then for all $\lambda<l$ the function $v$ satisfies in viscosity sense
\[
\left\{\begin{aligned}
\Psi(-\Delta)v+c(x)v&=0&&\text{in $D_{\lambda}$;}\\
v&\geq 0 &&\text{in $H_{\lambda,-e}\setminus D_{\lambda}$;}\\
v(x)&=-v(\bar{x})&&\text{ for $x\in \R^d$,} 
\end{aligned}\right.
\]
where 
\[
c(x)=c_f(x)=\left\{\begin{aligned} &\frac{f(u(\bar{x})-f(u(x))}{u(x)-u(\bar{x})}, &&u(x)\neq u(\bar{x});\\
& 0,&& u(x)=u(\bar{x}).\end{aligned}\right.
\]
Note that since $u$ is bounded and $f$ is locally Lipschitz continuous, there is $c_{\infty}>0$ independent of $\lambda$ such that
\[
\|c\|_{L^{\infty}(D_{\lambda})}\leq c_{\infty} \quad\text{ for all $\lambda<l$.}
\]
Moreover, since $u$ is continuous, we have hence $c\in L^\infty (D_{\lambda})$. Next, we show that we can move the hyperplanes from $l$ up to the first occurrence of either situation \eqref{sit1} or \eqref{sit2}. Denote $\lambda_0<l$ as this first occurrence. In order to apply the argumentation in these two situations as in the case $f(u)\equiv 1$, we need to show that
\[
v=v_{\lambda}=u-u\circ R_{\lambda,e}\geq 0 \quad\text{ in $D_{\lambda}$ for all $\lambda\in[\lambda_0,l)$.}
\]
First note that due to Theorem \ref{thm:anti-mp} there is $\epsilon>0$ such that we have $v\geq 0$ in $D_{\lambda}$ for $\lambda\in[l-\epsilon,l)$. Let
$$
\lambda^{\ast}=\inf\{\lambda<l\;:\; v_{\mu}\geq0 \text{ in $H_{\mu,-e}$ for all $\mu\in(\lambda,l)$}\}
$$
 Clearly, $\lambda^{\ast}\geq \lambda_0$. Assume by contradiction $\lambda^{\ast}>\lambda_0$. By continuity, we have $v=v_{\lambda^{\ast}}\geq0$ in $D_{\lambda^{\ast}}$. Then Theorem \ref{thm:anti-mp2} implies $v\equiv 0$ on $\R^d$ or $v>0$ in $D_{\lambda^{\ast}}$. Assume that $v\equiv 0$ on $\R^d$. Then $u$ is symmetric with respect to $\partial H_{\lambda^{\ast},e}$ since $D\setminus (H_{\lambda^{\ast},e}\cup D_{\lambda^{\ast}})$ has nonempty interior by assumption, we can also move hyperplanes from $l_-:=\sup_{x\in D}x\cdot (-e)$ to $-\infty$ and due to the symmetry of $u$ with respect to $\partial H_{\lambda^{\ast},e}$ there is $\epsilon'>0$ such that $v_{-\lambda,-e}\equiv 0$ on $D_{-\lambda,-e}$ for $\lambda\in (l_--\epsilon',l_-)$. But then this implies $u\equiv 0$ in contradiction to the assumption that $u$ is nontrivial. Hence we have $v=v_{\lambda^{\ast}}>0$ in $D_{\lambda^{\ast}}$. By continuity of $v$ and $\lambda\mapsto v_{\lambda}$, there is $\delta>0$ and $K\subset D_{\lambda^{\ast}}$ such that for any $\epsilon\in[0,\delta]$ we have $v_{\lambda^{\ast}-\epsilon}>0$ in $K$ and $|D_{\lambda^{\ast}-\delta}\setminus K|$ is small enough (with respect to $c_{\infty}$) to apply the second assertion of Theorem \ref{thm:anti-mp}. But then, Theorem \ref{thm:anti-mp} applied to $\Omega=D_{\lambda^{\ast}-\epsilon}\setminus K$ implies $v_{\lambda^{\ast}-\epsilon}\geq0$ in $H_{\lambda^{\ast}-\epsilon,-e}$ for all $\epsilon\in[0,\delta]$. This is a contradiction to the definition of $\lambda^{\ast}$ and hence we must have $\lambda^{\ast}=\lambda_0$. From here, the proof follows as the proof of Theorem \ref{thm2.1}.
\end{proof}

\subsection{Proof of Theorem \ref{thm2.3}}

In the following, we assume there is a solution as stated in Theorem \ref{thm2.3} and we let $G_1,\ldots,G_n, G$ be compact sets and $A_1,\ldots,A_n,A\in \R$, $A>0$ as stated. We proceed similarly as in the proof of Theorem \ref{thm2.1} and \ref{thm2.2}, however, arguing with the Situations 2 occurring in the previous proofs this time for the set $G$. We continue with the notation of the previous sections. In particular, the function $v_{\lambda}=v_{\lambda,e}=u-u\circ R_{\lambda,e}$ for $\lambda\in \R$, $e\in \partial B_1(0)$ satisfies in viscosity sense
\begin{equation*}%\label{eq:unbounded}
\left\{\begin{aligned}
\Psi(-\Delta)v_{\lambda}+c(x)v_{\lambda}&=0&&\text{in $D_{\lambda}$;}\\
v_{\lambda}&\geq 0 &&\text{in $H_{-\lambda,-e}\setminus D_{\lambda}$;}\\
v_{\lambda}(x)&=-v_{\lambda}(R_{\lambda,e}(x))&&\text{ for $x\in \R^d$,} 
\end{aligned}\right.
\end{equation*}
where $D_{\lambda}=H_{-\lambda,-e}\setminus (G\cup R_{\lambda,e}(G))$ and
\[
c(x)=c_{f,\lambda,e}(x)=\left\{\begin{aligned} &\frac{f(u(R_{\lambda,e}(x))-f(u(x))}{u(x)-u(R_{\lambda,e}x)}, && u(x)\neq u(R_{\lambda,e}(x)),
\\
& 0,&& u(x)=u(R_{\lambda,e}(x)).\end{aligned}\right.
\]
Since $f$ is assumed to be Lipschitz continuous and $u$ is bounded we have 
\[
\|c\|_{L^{\infty}(D_{\lambda})}\leq c_{\infty} \quad\text{ for all $\lambda\in \R$, $e\in \partial B_1(0)$.}
\]
As before, we denote $\lambda_0\in\R$ as the largest number such that we have
\begin{align}
&\text{Situation 1: There is $p_0\in \partial G\cap \partial R_{\lambda,e}(G\cap H_{\lambda,e})\setminus T_{\lambda,e}$ or}\label{sit1-b}\\
&\text{Situation 2: $T_{\lambda,e}$ is orthogonal to $\partial G$ at some point $p_0\in \partial G\cap T_{\lambda,e}$.}\label{sit2-b}
\end{align}
and moreover
\begin{align*}
\lambda^{\ast}:=\inf\{\lambda\geq\lambda_0\;:\; v_{\mu}\geq0 \text{ in $H_{-\mu,-e}$ for all $\mu\in(\lambda,\infty)$}\}
\end{align*} 
Clearly, as in the previous case, we have $\lambda_0\in \R$ due to the boundedness of $G$ and the regularity assumptions on its boundary. In the following, we let $e\in \partial B_1(0)$ be fixed and we aim at showing that we have $\lambda_0=\lambda^{\ast}$ to conclude our result. For simplicity of the notation, we may assume $\lambda_0=0$ and $e=e_1$ by translation and rotation of the problem, and denote 
\begin{align*}
H_{\lambda}&=\{x_1>\lambda\},\\
 H^{\lambda}&=\{x_1<\lambda\}=\R^d\setminus \overline{H_{\lambda}}=H_{-\lambda,-e_1},\\
R_{\lambda}(x)&=R_{\lambda,e_1}(x)=(2\lambda-x_1,x'),\qquad  \text{for $x=(x_1,x')$, $x\in \R$, $x'\in \R^{d-1}$, and }\\
D_{\lambda}&=\{x_1<\lambda\}\setminus (G\cup R_{\lambda}(G)).
\end{align*}
Moreover, we consider the statement
\begin{equation*}%\label{s-lambda}
(S_{\lambda})\quad v_{\mu}> 0 \quad\text{ in $D_{\mu}$ for $\mu\geq \lambda$.}
\end{equation*}
Since we assume $f$ is non-increasing for small arguments we can find $f_0\in(0,A]$ such that $f|_{[0,f_0]}$ is non-increasing. Note that since $u\to 0$ for $|x|\to \infty$, given $\lambda>0$ there is $R>0$ such that $|v_{\lambda}|\leq u_{\lambda}+u\leq f_0$ in $\R^d\setminus B_R(0)$ and in particular, $c\geq 0$ 
 on $D_{\lambda}\setminus B_R(0)$. To begin the moving plane method, we note that
\begin{lemma}\label{lemma41}
	$(S_{\lambda})$ holds for $\lambda$ large enough.
\end{lemma}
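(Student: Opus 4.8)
The plan is to carry out the first step of the moving‑plane scheme ``coming from infinity'': for $\lambda$ large the reflected obstacle $R_\lambda(G)$ lies entirely in $H_\lambda$ and far from the origin, and on this range of $\lambda$ we show $v_\lambda>0$ in $D_\lambda$. The engine is the exterior/narrow maximum principle for anti‑symmetric functions, Lemma~\ref{lemma:mp-unbounded}, applied with half‑space $H^\lambda$, reflection $R_\lambda$ and $\Omega=D_\lambda$, followed by the anti‑symmetric strong maximum principle, Theorem~\ref{thm:anti-mp2}, to pass from $v_\lambda\ge 0$ to $v_\lambda>0$.

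First I collect the preliminaries. Since $u\ge 0$ is a nontrivial solution ($u\equiv A>0$ on $G$), the strong maximum principle gives $u>0$ in $\R^d\setminus G$; together with $u\in C^{\beta}(\R^d)$ and $u\to 0$ at infinity this yields $u\in\cC_b(\R^d)$, hence $v_\lambda=u-u\circ R_\lambda\in\cC_b(\R^d)$, and moreover $\liminf_{x\in H^\lambda,\,|x|\to\infty}v_\lambda(x)\ge 0$ because $u\ge 0$ while $u(R_\lambda x)\to 0$ as $|x|\to\infty$ in $H^\lambda$ (note $|R_\lambda x|\to\infty$). Fix $R_0>0$ with $G\subset B_{R_0}(0)$ and $u<f_0$ on $\R^d\setminus B_{R_0}(0)$, and set $m:=\min_{\overline{B_{R_0}}\setminus\mathrm{int}\,G}u>0$ (minimum of a positive continuous function over a compact set). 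With $R:=R_0+1$ and $c_\infty$ the uniform bound $\|c\|_{L^\infty(D_\lambda)}\le c_\infty$, Lemma~\ref{lemma:mp-unbounded} produces $\delta>0$; crucially $R_0,R,\delta,m$ do not depend on $\lambda$.

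Next, choose $\Lambda>R_0+\delta$ so large that $\sup_{|y|\ge\Lambda+\delta}u(y)<m$, and fix any $\lambda\ge\Lambda$. For such $\lambda$ one has $G\subset H^\lambda$, $R_\lambda(G)\subset\{x_1>R_0\}\subset H_\lambda$ and $\overline{B_{R_0}}\cap R_\lambda(G)=\emptyset$; in particular $H^\lambda\setminus D_\lambda=G$ and $R_\lambda(G)\cap G=\emptyset$. Put $K:=D_\lambda\setminus\overline{B_{R_0}}$, an open subset of $\Omega=D_\lambda$ with $\Omega\setminus K\subset B_R(0)$; for $x\in K$ both $u(x)<f_0$ (as $|x|>R_0$) and $u(R_\lambda x)<f_0$ (as $|R_\lambda x|>\lambda>R_0$), so $u(x),u(R_\lambda x)\in[0,f_0]$, whence $c(x)\ge 0$. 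It remains to verify $v_\lambda\ge 0$ on $(H^\lambda\setminus D_\lambda)\cup\big((D_\lambda\cap\{x_1\le\lambda-\delta\})\setminus K\big)$. On $H^\lambda\setminus D_\lambda=G$ we have $v_\lambda=A-u(R_\lambda\,\cdot)>0$ since $R_\lambda(G)\cap G=\emptyset$. And $(D_\lambda\cap\{x_1\le\lambda-\delta\})\setminus K\subset\overline{B_{R_0}}\setminus G\subset\overline{B_{R_0}}\setminus\mathrm{int}\,G$, where $u(x)\ge m$ while $(R_\lambda x)_1=2\lambda-x_1\ge\lambda+\delta$ gives $u(R_\lambda x)\le\sup_{|y|\ge\Lambda+\delta}u(y)<m$; hence $v_\lambda(x)\ge m-\sup_{|y|\ge\Lambda+\delta}u(y)>0$. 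Lemma~\ref{lemma:mp-unbounded} now yields $v_\lambda\ge 0$ in $H^\lambda$, in particular in $D_\lambda$. Since $v_\lambda\not\equiv 0$ (it is positive on $G$), Theorem~\ref{thm:anti-mp2} in $\Omega=D_\lambda$ forces $v_\lambda>0$ in $D_\lambda$. As $\lambda\ge\Lambda$ was arbitrary, $(S_\Lambda)$ holds, which proves the lemma.

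The main obstacle is the verification on the bounded ``core'' $(D_\lambda\cap\{x_1\le\lambda-\delta\})\setminus K$: there one must combine the decay of $u$ at infinity (to make $u(R_\lambda x)$ uniformly small once $\lambda$ is large) with a positive lower bound for $u$ away from $G$ (to control $u(x)$ from below), and it is this lower bound that makes the preliminary strong‑maximum‑principle step $u>0$ in $\R^d\setminus G$ genuinely necessary. A secondary point requiring care is that $R$, and hence $\delta$, must be chosen independently of $\lambda$, which is why $R_0$ is fixed using only $G$ and the decay of $u$, and all smallness conditions on $|\Omega\setminus K|$ are absorbed into the single $\delta$ from Lemma~\ref{lemma:mp-unbounded} rather than into Theorem~\ref{thm:anti-mp}.
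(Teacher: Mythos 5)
Your overall strategy is the same as the paper's: establish $v_\lambda\ge 0$ in $H^\lambda$ for large $\lambda$ via the anti-symmetric maximum principle in unbounded sets (Lemma \ref{lemma:mp-unbounded}), then upgrade to $v_\lambda>0$ in $D_\lambda$ by Theorem \ref{thm:anti-mp2}. The difference is the choice of the ``good'' set $K$: the paper takes $K=\Omega=D_\lambda\cap\{u<f_0/2\}$ (a level set of $u$), so that $\Omega\setminus K=\emptyset$ and the only sign conditions to check are $v_\lambda\ge 0$ on $G$ and on $\{u\ge f_0/2\}\cap H^\lambda$, both immediate from $u\circ R_\lambda\le f_0/2$ in $H^\lambda$; you instead take $\Omega=D_\lambda$ and the geometric set $K=D_\lambda\setminus\overline{B_{R_0}}$, which forces you to verify $v_\lambda\ge0$ on the bounded core $D_\lambda\cap\overline{B_{R_0}}$, and for that you need the strictly positive lower bound $m=\min_{\overline{B_{R_0}}\setminus{\rm int}\,G}u>0$, i.e.\ $u>0$ in $\R^d\setminus G$.

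This is where there is a genuine gap. You justify $u>0$ in $\R^d\setminus G$ by ``the strong maximum principle,'' but under the hypotheses of Theorem \ref{thm2.3} this does not follow directly. Writing $f(u)=\tilde c(x)u+f(0)$ with $\tilde c$ bounded (by the Lipschitz bound), $u$ is a supersolution of the linear equation $\Psi(-\Delta)u-\tilde c(x)u=0$ only if $f(0)\ge 0$; at an interior zero $x_0\notin G$ (a global minimum, since $u\ge0$), the nonlocal evaluation gives $\Psi(-\Delta)u(x_0)\le -A\int_G j(|x_0-y|)\,dy<0$ in the viscosity sense, and this contradicts the equation only when $f(0)\ge0$. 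The theorem only assumes $f$ non-increasing near $0$, not $f(0)\ge0$, and the paper never establishes positivity of $u$ off $G$ before this lemma --- indeed, in the paper's development $u>0$ in $\R^d\setminus G$ is a \emph{consequence} of exactly the statement you are proving (from $v_\lambda>0$ in $D_\lambda$ one gets $u>u\circ R_\lambda\ge0$ there), so assuming it here is close to circular. It can be repaired, but only with an additional argument (e.g.\ first deriving $f(0)\ge0$ from $u\to0$ at infinity by comparison with exit-time barriers in large balls far out, where $\Psi(-\Delta)u\le f(0)/2<0$ would force $u$ to exceed a multiple of $V(r)^2\to\infty$), or simply by adopting the paper's level-set choice of $K$, which sidesteps any lower bound on $u$. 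A very minor further slip: the inclusion chain ``$R_\lambda(G)\subset\{x_1>R_0\}\subset H_\lambda$'' is backwards in its second step (one has $\{x_1>\lambda\}\subset\{x_1>R_0\}$), though the two facts you actually use, $R_\lambda(G)\subset H_\lambda$ and $R_\lambda(G)\cap\overline{B_{R_0}}=\emptyset$, are correct.
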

\begin{proof}
	 We first show that we have $v_{\lambda}\geq0$ in $H^{\lambda}$ for $\lambda$ large enough. First note that $G\subset H^{\lambda}$ for $\lambda$ large and hence $D_{\lambda}=H^{\lambda}\setminus G$. Moreover, since $u\to 0$ for $|x|\to \infty$, we can fix $R>0$ such that $u\leq \frac{f_0}{2}$ (with $f_0\in(0,A]$ such that $f|_{(0,f_0]}$ being non-increasing). Hence, by making $\lambda$ larger if necessary, we have $u\circ R_{\lambda}\leq \frac{f_0}{2}$ in $H^{\lambda}$. Moreover, clearly $v_{\lambda}\geq0$ in $\{u\geq f_0/2\}\cap H^{\lambda}$ and $c\geq0$ in $\Omega:=D_{\lambda}\cap \{u<f_0/2\}$. With $K=\Omega$ in Lemma \ref{lemma:mp-unbounded} implies $v_{\lambda}\geq 0$ in $H^{\lambda}$.
Having shown $v_{\lambda}\geq0$ in $H^{\lambda}$ for $\lambda$ large, we note that Theorem \ref{thm:anti-mp2} implies $v_{\lambda}\equiv 0$ or $v_{\lambda}>0$. But since $u$ is nontrivial and $u\to 0$ for $|x|\to \infty$ we cannot have $v_{\lambda}=0$ for $\lambda$ large. This finishes the proof.
\end{proof}

\begin{lemma}\label{lemma42}
	$(S_{\lambda})$ holds for all $\lambda>\lambda^{\ast}$. Moreover, $v_{\lambda^{\ast}}\geq0$ in $H^{\lambda^{\ast}}$.	
\end{lemma}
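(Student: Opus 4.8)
The plan is to run the usual \emph{continuation} step of the moving plane method; given the maximum principles already in place it is essentially soft. Write $S=\{\lambda\ge\lambda_0:\ v_\mu\ge0\text{ in }H^\mu\text{ for all }\mu>\lambda\}$, so that $\lambda^\ast=\inf S$; by Lemma~\ref{lemma41} the set $S$ is nonempty, hence $\lambda_0\le\lambda^\ast<\infty$. First I would record the set-theoretic observation that $v_\lambda\ge0$ in $H^\lambda$ for every $\lambda>\lambda^\ast$: since $\lambda>\inf S$ there is $\lambda'\in S$ with $\lambda_0\le\lambda'<\lambda$, and then the defining property of $\lambda'$ applied to $\mu=\lambda\in(\lambda',\infty)$ gives $v_\lambda\ge0$ in $H^\lambda$. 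In particular $v_\lambda\ge0$ on the whole half-space $H^\lambda\supset D_\lambda$ for all $\lambda>\lambda^\ast$.

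The second step upgrades this to strict positivity and thereby produces $(S_\lambda)$ for $\lambda>\lambda^\ast$. Fix $\mu>\lambda^\ast$, hence $\mu>\lambda_0$. Then $v_\mu\in\cC_b(\R^d)$ is an anti-symmetric solution, in particular a super-solution, of $\Psi(-\Delta)v_\mu+c(x)v_\mu=0$ in $D_\mu$ with $c\in L^\infty(D_\mu)$ and $v_\mu\ge0$ on the half-space $H^\mu$; so Theorem~\ref{thm:anti-mp2} gives the dichotomy $v_\mu\equiv0$ on $\R^d$ or $v_\mu>0$ in $D_\mu$. I would discard the first alternative for $\mu>\lambda_0$ as follows: $v_\mu\equiv0$ means $u=u\circ R_\mu$, so $\{u=A\}=R_\mu(\{u=A\})$, and since $0\le u<A$ off $G$ we have $\{u=A\}=G$, whence $G$ is symmetric about $T_\mu$; but a disjoint union of compact connected $\cC^2$ sets symmetric about $T_\mu$ must have either a component crossing $T_\mu$ — whose $\cC^2$ boundary then meets $T_\mu$ orthogonally, i.e.\ \eqref{sit2-b} holds at level $\mu$ — or a component $G_i\subset H_\mu$ with $R_\mu(G_i)=G_j\subset H^\mu$ a distinct component, in which case any $p_0\in\partial G_j\subset\partial G\cap\partial R_\mu(G\cap H_\mu)\setminus T_\mu$ realizes \eqref{sit1-b} at level $\mu$; either way we contradict the maximality of $\lambda_0$. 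Hence $v_\mu>0$ in $D_\mu$ for all $\mu>\lambda^\ast$, i.e.\ $(S_\lambda)$ holds for all $\lambda>\lambda^\ast$.

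Finally, for the last assertion I would pass to the limit $\lambda\downarrow\lambda^\ast$: given $x\in H^{\lambda^\ast}$ we have $x\in H^\lambda$ for all $\lambda>\lambda^\ast$, so $v_\lambda(x)\ge0$ by the first step, and continuity of $u$ with $R_\lambda x\to R_{\lambda^\ast}x$ yields $v_{\lambda^\ast}(x)=\lim_{\lambda\downarrow\lambda^\ast}\bigl(u(x)-u(R_\lambda x)\bigr)\ge0$, i.e.\ $v_{\lambda^\ast}\ge0$ in $H^{\lambda^\ast}$. I expect the only non-routine point to be the exclusion of $v_\mu\equiv0$ for $\mu>\lambda_0$, that is, the elementary but slightly fiddly geometric fact that symmetry of $G$ about $T_\mu$ forces one of the stopping situations \eqref{sit1-b}--\eqref{sit2-b}; all the rest is bookkeeping with the definition of $\lambda^\ast$ and continuity of $u$. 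Note that, in contrast with Lemma~\ref{lemma41}, the unbounded narrow-domain principle Lemma~\ref{lemma:mp-unbounded} is not needed here, since $v_\lambda\ge0$ on the full half-space is available for free and Theorem~\ref{thm:anti-mp2} alone does the rest.
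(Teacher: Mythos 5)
Your proof is correct, but the way you dispose of the alternative $v_\mu\equiv 0$ is genuinely different from the paper's. The common part: reducing to the dichotomy of Theorem~\ref{thm:anti-mp2} once $v_\mu\geq 0$ in $H^{\mu}$ is known for $\mu>\lambda^{\ast}$, and obtaining $v_{\lambda^{\ast}}\geq0$ by continuity, is the same (the paper organizes the bookkeeping through $\bar{\lambda}=\inf\{\lambda:(S_\lambda)\text{ holds}\}$ and proves $\bar{\lambda}=\lambda^{\ast}$, whereas you argue level by level for each $\mu>\lambda^{\ast}$ --- a cleaner arrangement). The divergence is in excluding $v_\mu\equiv0$: you use rigidity of $G$, namely $\{u=A\}=G$ (which needs the hypothesis $0\leq u<A$ on $\R^d\setminus G$) plus the geometric fact that $R_\mu$-invariance of a compact set with $\cC^2$ boundary forces one of the stopping configurations \eqref{sit1-b}--\eqref{sit2-b} at level $\mu$, contradicting the maximality of $\lambda_0$. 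The paper never touches the geometry of $G$ in this lemma: it shows that $v_\lambda\equiv0$, combined with $v_\mu\geq0$ in $H^{\mu}$ for all $\mu>\lambda^{\ast}$, propagates to $v_\mu\equiv0$ on a whole interval of reflection levels, and then derives a contradiction between symmetry about a continuum of parallel hyperplanes and the strict decrease beyond $\bar{\mu}=\sup\{\mu: v_\mu\equiv0\}$. What each buys: your route is shorter and more transparent, but it consumes $u<A$ off $G$ and the $\cC^2$ regularity of $\partial G$, which the paper's argument deliberately avoids because Lemma~\ref{lemma42} is reused verbatim in the proofs of Theorem~\ref{thm2.3a} (only $0\leq u\leq A$, so $\{u=A\}=G$ is unavailable) and Theorem~\ref{thm2.4} (where $G=\{0\}$ has no $\cC^2$ boundary); in the paper only Lemma~\ref{lemma43} uses $u<A$, and that is exactly the lemma replaced later. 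Two smaller points you should make explicit if you keep your route: in the fixed-component case, orthogonality at a point of $\partial G_i\cap T_{\mu,e}$ requires ruling out tangential contact (normal parallel to $e$), which does follow from the local one-sidedness of a $\cC^2$ boundary under reflection invariance but is not automatic from invariance of the tangent space alone; and nonemptiness of your set $S$ uses the half-space inequality $v_\lambda\geq0$ in $H^{\lambda}$ for large $\lambda$, which sits in the proof of Lemma~\ref{lemma41} rather than in its statement (harmless, since for large $\lambda$ one has $G\subset H^{\lambda}$ and hence $H^{\lambda}\setminus D_\lambda\subset G$, where $v_\lambda\geq0$ trivially).
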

\begin{proof}
	Let $\bar{\lambda}=\inf\{\lambda\;:\; (S_\lambda) \text{ holds}\}$. Then $\lambda^{\ast}\leq \bar{\lambda}<\infty$ by Lemma \ref{lemma41}. And by continuity, we have $v_{\bar{\lambda}}\geq0$ in $H^{\bar{\lambda}}$. The statement follows once we have shown $\bar{\lambda}=\lambda^{\ast}$. Assume by contradiction $\bar{\lambda}>\lambda^{\ast}$ and let $\lambda\in(\lambda^{\ast},\bar{\lambda}]$. By Theorem \ref{thm:anti-mp2} we have $v_{\lambda}\equiv 0$ in $\R^d$ or $v_{\lambda}>0$ in $D_{\lambda}$. First, assume $v_{\lambda}\equiv 0$ in $\R^d$. Note that this implies we have $u(t,x')=u(2\lambda-t,x')$ for all $t\in \R$, $x'\in \R^{d-1}$. Moreover, since $v_{\mu}\geq0$ in $H^{\mu}$ for $\mu\geq \lambda^{\ast}$, we have $u(t,x')\geq u(2\mu-t,x')$ for all $t<\mu$, $\mu\geq \lambda^{\ast}$ and $x'\in \R^{d-1}$. In particular, we have $\mu\in(\lambda^{\ast},\lambda)$, $t<\mu$, and $x'\in \R^{d-1}$ we have
	\[
	u(2\lambda-t,x')=u(t,x')\geq u(2\mu-t,x')\geq u(2\lambda-t,x'),
	\]
	where in the last inequality, we have used the monotonicity of $u$ for $\tilde{\mu}=(\mu+\lambda)-t\geq\lambda^{\ast}$---note that $2\mu-t<\tilde{\mu}$ and $R_{\tilde{\mu}}(2\mu-t,x')=(2\lambda-t,x')$. This implies that we have $v_{\mu}\equiv 0$ in $\R^d$ for all $\mu\in[\lambda^{\ast},\lambda]$. Let $\bar{\mu}=\sup\{\mu\;:\; v_{\mu}\equiv 0\}$. Then $\lambda\leq \bar{\mu}\leq \bar{\lambda}$ and by continuity $v_{\bar{\mu}}\equiv 0$ in $\R^d$, but $v_{\bar{\mu}+\epsilon}>0$ in $H^{\bar{\mu}+\epsilon}$ for all $\epsilon>0$. But this implies $t\mapsto u(t+\bar{\mu},x')=u(-t+\bar{\mu},x')$ is strictly decreasing on $(0,\infty)$ for all $x'\in \R^{d-1}$, which is a contradiction to the symmetry of $u$ with respect to $\{x_1=\mu\}$ for all $\mu\in(\lambda^{\ast},\lambda)$. 	Hence we must have $v_{\lambda}>0$ in $D_{\lambda}$. But since $\lambda$ was chosen arbitrarily in $(\lambda^{\ast},\bar{\lambda}]$, this contradicts the definition of $\bar{\lambda}$ and hence we must have $\bar{\lambda}=\lambda^{\ast}$.
\end{proof}

\begin{lemma}\label{lemma43}
	If $\lambda^{\ast}>\lambda_0$, then $v_{\lambda^{\ast}}\equiv 0$ on $\R^d$.
\end{lemma}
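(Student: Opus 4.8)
The plan is a contradiction argument driven by the sliding/moving-plane machinery already set up in Lemmas~\ref{lemma41} and \ref{lemma42}, using the anti-symmetric Hopf lemma (Theorem~\ref{thm:anti-mp2}) for a dichotomy and the unbounded narrow-domain maximum principle (Lemma~\ref{lemma:mp-unbounded}) for the sliding step. By Lemma~\ref{lemma42} we already know $v_{\lambda^{\ast}}\geq 0$ in $H^{\lambda^{\ast}}$, and $v_{\lambda^{\ast}}$ is an anti-symmetric super-solution of $\Psi(-\Delta)v_{\lambda^{\ast}}+c(x)v_{\lambda^{\ast}}=0$ in $D_{\lambda^{\ast}}$; hence Theorem~\ref{thm:anti-mp2} gives either $v_{\lambda^{\ast}}\equiv 0$ (the desired conclusion) or $v_{\lambda^{\ast}}>0$ in $D_{\lambda^{\ast}}$. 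I would assume the second alternative and derive a contradiction with the minimality of $\lambda^{\ast}$.

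First I would record the geometric consequence of $\lambda^{\ast}>\lambda_0$: since neither Situation~\eqref{sit1-b} nor \eqref{sit2-b} occurs at any $\mu\in(\lambda_0,\lambda^{\ast}]$, the standard moving-plane geometry together with the $\cC^2$-regularity of $\partial G$ gives $R_{\mu}(G\cap H_{\mu})\subseteq\mathrm{int}\,G$ for all such $\mu$. Consequently $D_{\mu}=H^{\mu}\setminus G\subseteq D_{\lambda^{\ast}}$; on $H^{\mu}\setminus D_{\mu}=G\cap H^{\mu}$ one has $v_{\mu}=A-u\circ R_{\mu}\geq 0$; and if $x_0\in\partial G\cap H^{\mu}$ then $R_{\mu}x_0\notin G$ (otherwise $x_0\in R_{\mu}(G\cap H_{\mu})\subseteq\mathrm{int}\,G$), so $v_{\mu}(x_0)=A-u(R_{\mu}x_0)>0$. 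In particular $v_{\lambda^{\ast}}$ is positive on $D_{\lambda^{\ast}}$ and on $\partial G\cap H^{\lambda^{\ast}}$, hence on $\overline{D_{\lambda^{\ast}}}\cap\{x_1<\lambda^{\ast}\}$.

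For the sliding step, using $u(x)\to 0$ as $|x|\to\infty$ and that $f$ is non-increasing on some $[0,f_0]$, I would fix $R>0$ with $G\Subset B_R(0)$ large enough that $c=c_{f,\mu,e}\geq 0$ on $D_{\mu}\setminus B_R(0)$ for all $\mu$ near $\lambda^{\ast}$, and let $\delta=\delta(c_{\infty},R)>0$ be the threshold furnished by Lemma~\ref{lemma:mp-unbounded}. The set $C:=\overline{D_{\lambda^{\ast}}}\cap\overline{B_R(0)}\cap\{x_1\leq\lambda^{\ast}-\delta\}$ is compact and contained in $\overline{D_{\lambda^{\ast}}}\cap\{x_1<\lambda^{\ast}\}$, so $v_{\lambda^{\ast}}\geq\sigma>0$ on $C$; by uniform continuity of $u$ on compacts and continuity of $\mu\mapsto R_{\mu}$ there is $\epsilon\in(0,\min\{\delta,\lambda^{\ast}-\lambda_0\})$ such that $v_{\mu}\geq\sigma/2>0$ on $C$ for all $\mu\in(\lambda^{\ast}-\epsilon,\lambda^{\ast})$. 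For each such $\mu$ I would apply Lemma~\ref{lemma:mp-unbounded} with $\Omega=D_{\mu}=H^{\mu}\setminus G$ and $K=D_{\mu}\setminus B_R(0)$: then $\Omega\setminus K\subseteq B_R(0)$, $c^-\leq c_{\infty}$, $c\geq 0$ on $K$, $\liminf_{|x|\to\infty}v_{\mu}(x)\geq 0$, and the required non-negativity of $v_{\mu}$ on $(H^{\mu}\setminus\Omega)\cup\big((\Omega\cap\{\dist(\cdot,T_{\mu})\geq\delta\})\setminus K\big)\subseteq(G\cap H^{\mu})\cup C$ holds by the previous paragraph and the choice of $\epsilon$. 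Lemma~\ref{lemma:mp-unbounded} then yields $v_{\mu}\geq 0$ in $H^{\mu}$ for all $\mu\in(\lambda^{\ast}-\epsilon,\lambda^{\ast})$; combined with $v_{\mu}\geq 0$ in $H^{\mu}$ for $\mu\geq\lambda^{\ast}$ (Lemma~\ref{lemma42} together with the geometry above), and since $\lambda^{\ast}-\epsilon\geq\lambda_0$, this places $\lambda^{\ast}-\epsilon$ in the set whose infimum defines $\lambda^{\ast}$ --- a contradiction. Hence $v_{\lambda^{\ast}}\equiv 0$.

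I expect the delicate point to be this sliding step, precisely the bookkeeping of the sign of $v_{\mu}$ on the part of $D_{\mu}$ that Lemma~\ref{lemma:mp-unbounded} does not control: near $\partial G$ this is exactly where the geometric fact $R_{\mu}(G\cap H_{\mu})\Subset\mathrm{int}\,G$ enters, forcing $v_{\mu}>0$ up to $\partial G\cap H^{\mu}$ and hence, by continuity, on a full neighbourhood; near the reflection hyperplane $T_{\mu}$ one must keep the uncontrolled region within the $\delta$-collar whose width is precisely what Lemma~\ref{lemma:mp-unbounded} tolerates; and away from $\partial G$, $T_{\mu}$ and infinity positivity is inherited from $v_{\lambda^{\ast}}>0$ in $D_{\lambda^{\ast}}$ by continuity in $\mu$. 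Deriving $R_{\mu}(G\cap H_{\mu})\subseteq\mathrm{int}\,G$ for $\mu>\lambda_0$ from the definitions of Situations~\eqref{sit1-b}--\eqref{sit2-b} and the $\cC^2$-regularity of $\partial G$ is routine moving-plane geometry, but the whole argument rests on it, so it should be stated with care.
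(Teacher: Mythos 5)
Your argument is correct and follows essentially the same route as the paper's proof: the dichotomy from Theorem \ref{thm:anti-mp2}, a positive lower bound for $v_{\lambda^{\ast}}$ on a compact set away from $T_{\lambda^{\ast},e}$ inside a large ball (using $\lambda^{\ast}>\lambda_0$ to exclude zeros on $\partial G$ off the hyperplane), continuity in $\mu$, and Lemma \ref{lemma:mp-unbounded} with $K=D_\mu\setminus B_R(0)$ to contradict the minimality of $\lambda^{\ast}$. The only differences are cosmetic (you take $\Omega=D_\mu$ rather than the paper's $\Omega=D_\mu\setminus U$, and you state the reflection inclusion $R_\mu(G\cap H_\mu)\subset\mathrm{int}\,G$ explicitly, which should be phrased for the open cap since points of $G\cap T_\mu$ are fixed by $R_\mu$).
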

\begin{proof}
	Note that Theorem \ref{thm:anti-mp2} implies either the claim or $v_{\lambda^{\ast}}>0$ in $D_{\lambda^{\ast}}$. Hence, we may assume by contradiction that the latter holds. Recall that we assume $f|_{[0,f_0]}$ is non-increasing for some $f_0\in(0,A]$. So we choose $R$ large enough so that 
	$|v_{\lambda^{\ast}}|\leq \frac{1}{2}f_0$ on $\R^d\setminus B_R(0)$. For any $\delta\in (0, \lambda_0-\lambda^*)$ we define $U=B_R(0)\cap D_{\lambda^{\ast}}\cap \{x_1\leq \lambda^{\ast}-\delta\}$. We claim that $\min_{\bar U}v_{\lambda^{\ast}}=\eps>0$. If this does not hold true, $\bar{U}$ being compact, we can find a point 
	$x\in \bar{D}_{\lambda^{\ast}}\cap G\cap\{x_1\leq \lambda^{\ast}-\delta\}$ such that $v_{\lambda^{\ast}}(x)=0$. But, since $u<A$ in $G^c$, this is possible if
	$x\in R_{\lambda^*} (G\cap H_{\lambda^*})\cap \partial(G\cap H^{\lambda^*})\setminus\{x_1=\lambda^*\}$. This is contradicting to the definition of $\lambda_0$. Thus we must have 
	$\min_{\bar U}v_{\lambda^{\ast}}=\eps>0$. Choose $\mu\in (\lambda^*, \lambda_0)$. Using continuity we note that for $\mu$ sufficiently close to $\lambda^*$ we must have $v_{\mu}>0$ in $\bar{U}$. Moreover, on $H^{\mu}\setminus D_{\mu}$ we have $v_{\mu}\geq0$, where we use again $\lambda^{\ast}> \lambda_0$.
	Lemma \ref{lemma:mp-unbounded} applied to $\Omega= D_{\mu}\setminus U$, $K=D_{\mu}\setminus B_R(0)$ implies $v_{\mu}\geq 0$ in $H^\mu$ for $\mu<\lambda^{\ast}$, $\lambda^{\ast}-\mu$ small, which is a contradiction to the definition of $\lambda^{\ast}$.  Hence $v_{\lambda^{\ast}}\equiv 0$ as claimed.
\end{proof}

\begin{lemma}
	\label{lemma44}
	We have $\lambda^{\ast}=\lambda_0$.
\end{lemma}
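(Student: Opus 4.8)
The plan is to argue by contradiction, building on Lemma~\ref{lemma43}. By the definition of $\lambda^{\ast}$ we already have $\lambda^{\ast}\geq\lambda_0$, so it suffices to rule out the strict inequality; assume therefore $\lambda^{\ast}>\lambda_0$. Since this is exactly the hypothesis of Lemma~\ref{lemma43}, that lemma gives $v_{\lambda^{\ast}}\equiv0$ on $\R^d$, i.e. $u=u\circ R_{\lambda^{\ast}}$, so that $u$ is symmetric with respect to the hyperplane $\{x_1=\lambda^{\ast}\}$.

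First I would turn the symmetry of $u$ into a symmetry of $G$: since $u\equiv A$ on $G$, $0\leq u<A$ on $\R^d\setminus G$, and $u\leq A$ everywhere, one has $G=\{x\in\R^d:u(x)=A\}$, so $u=u\circ R_{\lambda^{\ast}}$ forces $R_{\lambda^{\ast}}(G)=G$. Because $R_{\lambda^{\ast}}$ is an involution mapping $H_{\lambda^{\ast}}$ onto $H^{\lambda^{\ast}}$, this yields $R_{\lambda^{\ast}}(G\cap H_{\lambda^{\ast}})=G\cap H^{\lambda^{\ast}}$, and consequently $\partial R_{\lambda^{\ast}}(G\cap H_{\lambda^{\ast}})=\partial(G\cap H^{\lambda^{\ast}})\supseteq\partial G\cap H^{\lambda^{\ast}}$, the inclusion holding because any boundary point of $G$ with $x_1<\lambda^{\ast}$ is also a boundary point of $G\cap H^{\lambda^{\ast}}$.

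The last step is the elementary observation that $\partial G\cap H^{\lambda^{\ast}}\neq\emptyset$: $G$ is a nonempty compact set with $\cC^2$ boundary, hence has nonempty interior, which is symmetric about $\{x_1=\lambda^{\ast}\}$ and therefore meets the open half-space $H^{\lambda^{\ast}}$; since $G$ is bounded, $H^{\lambda^{\ast}}$ also contains points of $\R^d\setminus G$, and connectedness of $H^{\lambda^{\ast}}$ then forces $\partial G\cap H^{\lambda^{\ast}}\neq\emptyset$. Any $p_0$ in this set lies in $\partial G\cap\partial R_{\lambda^{\ast}}(G\cap H_{\lambda^{\ast}})$ and, since $H^{\lambda^{\ast}}\cap\{x_1=\lambda^{\ast}\}=\emptyset$, it lies off the reflection hyperplane; that is, $p_0$ witnesses Situation~\eqref{sit1-b} at the parameter $\lambda=\lambda^{\ast}$. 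Since $\lambda_0$ is by definition the \emph{largest} parameter for which \eqref{sit1-b} or \eqref{sit2-b} occurs, this gives $\lambda_0\geq\lambda^{\ast}$, contradicting $\lambda^{\ast}>\lambda_0$. Hence $\lambda^{\ast}=\lambda_0$.

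I expect the only point needing care to be the topological claim $\partial G\cap H^{\lambda^{\ast}}\neq\emptyset$, together with the observation that the resulting contact point necessarily lies off $\{x_1=\lambda^{\ast}\}$, so that it is genuinely Situation~\eqref{sit1-b} (rather than the orthogonality case~\eqref{sit2-b}). The multi-component structure $G=\bigcup_k G_k$ is harmless here: whether $R_{\lambda^{\ast}}$ leaves some component invariant or only permutes components in pairs, $\partial G\cap H^{\lambda^{\ast}}$ remains nonempty. All other ingredients are immediate from Lemma~\ref{lemma43} and the definitions of $\lambda_0$ and $\lambda^{\ast}$.
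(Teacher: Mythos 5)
Your proof is correct, but it closes the argument differently from the paper. The paper also assumes $\lambda^{\ast}>\lambda_0$ and invokes Lemma~\ref{lemma43}, but it then combines the resulting symmetry of $u$ with the strict monotonicity coming from Lemma~\ref{lemma42} and with a geometric claim (``there is $x_0\in G\cap H^{\lambda^{\ast}}\setminus R_{\lambda^{\ast}}(G)$ since $\lambda^{\ast}>\lambda_0$''), and the contradiction is obtained through the values of $u$: such an $x_0$ would have $u(x_0)=A$ while its reflection lies outside $G$, contradicting symmetry/strict decrease. You instead convert the symmetry of $u$ into symmetry of $G$ via the identification $G=\{u=A\}$ (which uses the hypothesis $u<A$ off $G$, legitimate here since Theorem~\ref{thm2.3a} replaces Lemmas~\ref{lemma43} and \ref{lemma44} by Lemma~\ref{lemma43a}), and then observe that $R_{\lambda^{\ast}}(G\cap H_{\lambda^{\ast}})=G\cap H^{\lambda^{\ast}}$ forces Situation~\eqref{sit1-b} to hold at $\lambda^{\ast}$, contradicting the maximality in the definition of $\lambda_0$. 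This buys you two things: you never need Lemma~\ref{lemma42}'s strict monotonicity, and you bypass the paper's unproved existence claim for $x_0$ (which, as stated, is actually incompatible with the symmetry $R_{\lambda^{\ast}}(G)=G$ that the standing assumption produces, and would otherwise require the standard moving-plane containment argument to justify); your topological verifications ($\mathrm{int}\,G$ symmetric hence meeting $H^{\lambda^{\ast}}$, connectedness of the half-space giving $\partial G\cap H^{\lambda^{\ast}}\neq\emptyset$, and $\partial G\cap H^{\lambda^{\ast}}\subseteq\partial(G\cap H^{\lambda^{\ast}})$) are all sound and insensitive to the multi-component structure of $G$. The only (harmless) implicit assumptions are that $G$, being compact with $\cC^2$ boundary, has nonempty interior, and that Situation~\eqref{sit1-b} is read literally as the set-theoretic touching condition, which is exactly how it is stated in the paper; with those granted, your argument is complete.
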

\begin{proof}
Recall, $\lambda_0=0\leq \lambda^{\ast}$. Assume by contradiction $\lambda^{\ast}>0$ and note that by Lemma \ref{lemma42} and \ref{lemma43} we have that $u$ is symmetric with respect to $\partial H_{\lambda^{\ast}}$ and strictly decreasing in $x_1>\lambda^{\ast}$, i.e. for $t>s\geq\lambda^{\ast}$ and $x'\in \R^{d-1}$ we have $u(t,x')<u(s,x')$ and $u(s,x')=u(2\lambda^{\ast}-s,x')$. Moreover, there is $x_0\in G\cap H^{\lambda^{\ast}}\setminus R_{\lambda^{\ast}}(G)$ since $\lambda^{\ast}>0$.
 Since $u(x_0)=A$ is a global maximum this contradicts the fact that $u$ is strictly decreasing in the direction $x_1>\lambda^{\ast}$ (this follows similarly as in the proof of Lemma \ref{lemma42}). Hence $\lambda^{\ast}=0$ as claimed. 
\end{proof}

\begin{remark}
	We note that due to Lemma \ref{lemma41}--Lemma \ref{lemma44} and by rotation and translation, we actually have $u$ is symmetric about $\partial H_{\lambda_0,e}$ and strictly decreasing in the direction $x\cdot e$ away from $\partial H_{\lambda_0,e}$. In particular, this implies that $G$ must be connected in $x\cdot e$ direction and symmetric about $\partial H_{\lambda_0,e}$.
\end{remark}

\begin{proof}[Proof of Theorem \ref{thm2.3}]
	The Lemma \ref{lemma41}--\ref{lemma44} imply that we can move the hyperplanes up the first time one of the two situations \eqref{sit1-b} and \eqref{sit2-b} occurs. From there, the statement follows as outlined in the proof of Theorem \ref{thm2.1}. Indeed, Theorem \ref{thm:anti-mp2} rules out $v_{\lambda_0}>0$ in $D_{\lambda_0}$ in Situation 1, while in Situation 2 we can rule out $v_{\lambda_0}>0$ in $D_{\lambda_0}$ by a combination of Lemma \ref{lemma:diagonal-decay} and Lemma \ref{lemma:cornerpoint-hopf}. For the application of the first Lemma, we note that also in the unbounded case we indeed have $u/[V\circ \delta_{D}]$ is H\"older continuous (see Remark \ref{R2.2}).	It then follows that we have $v_{\lambda_0}\equiv 0$ and then the statement follows analogously to the proof of Theorem \ref{thm2.1} (see also the argument for the end of the proof of \cite[Theorem 1.3]{SV14}).
\end{proof}

To extend the proof of Theorem \ref{thm2.3} to the situation of Theorem \ref{thm2.3a} we need the following adjustment of Lemma \ref{lemma43} and Lemma \ref{lemma44}. 

\begin{lemma}\label{lemma43a}
Assuming $f$ is nonincreasing on $[0,A]$, but $0\leq u\leq A$, then $\lambda^{\ast}=\lambda_0$.
\end{lemma}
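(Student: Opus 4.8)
The plan is to bypass the narrow-region analysis behind Lemma~\ref{lemma43} and Lemma~\ref{lemma44} altogether: under the present hypotheses the linear coefficient has a favourable sign \emph{everywhere}, and I will use this to prove directly that $v_{\mu}\geq 0$ in $H^{\mu}$ for every $\mu>\lambda_0$. Since the infimum defining $\lambda^{\ast}$ ranges over $\lambda\geq\lambda_0$, this forces $\lambda^{\ast}\leq\lambda_0$, and hence $\lambda^{\ast}=\lambda_0$.

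First I would record that, because $f$ is non-increasing on $[0,A]$ and $0\leq u\leq A$ on all of $\R^d$, the coefficient $c=c_{f,\lambda,e}$ from the proof of Theorem~\ref{thm2.3} satisfies $c\geq 0$ on $D_{\lambda}$ for every $\lambda$ and $e$: when $u(x)>u(R_{\lambda,e}(x))$ both values lie in $[0,A]$, so $f(u(x))\leq f(u(R_{\lambda,e}(x)))$ and the difference quotient is nonnegative, and symmetrically when $u(x)<u(R_{\lambda,e}(x))$. In particular $c^{-}\equiv 0$, and $c$ is bounded as already noted in the proof of Theorem~\ref{thm2.3}.

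Next, after the usual normalisation $\lambda_0=0$, $e=e_1$, I would fix $\mu>\lambda_0$ and invoke the classical moving-plane geometry: since $\lambda_0$ is the largest parameter at which Situation~\eqref{sit1-b} or \eqref{sit2-b} occurs, the reflected cap stays inside $G$, i.e. $R_{\mu}(G\cap H_{\mu})\subset G$, so that $D_{\mu}=H^{\mu}\setminus G$ and $H^{\mu}\setminus D_{\mu}=G\cap H^{\mu}$. On $G\cap H^{\mu}$ one has $u\equiv A$, hence $v_{\mu}=A-u\circ R_{\mu}\geq 0$ there (because $u\leq A$), while $v_{\mu}(x)=u(x)-u(R_{\mu}(x))\to 0$ as $|x|\to\infty$. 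Since $v_{\mu}\in\cC_b(\R^d)$ is anti-symmetric with respect to $H^{\mu}$ and solves $\Psi(-\Delta)v_{\mu}+c(x)v_{\mu}=0$ in $D_{\mu}$ with $c\geq 0$, I would apply Lemma~\ref{lemma:mp-unbounded} with $\Omega=D_{\mu}$ and $K=\Omega$ (so that $\Omega\setminus K=\emptyset\subset B_1(0)$ and $c\geq 0$ on $K$), $c_{\infty}=1$, $R=1$, and conclude $v_{\mu}\geq 0$ in $H^{\mu}$. As $\mu>\lambda_0$ was arbitrary, $\lambda^{\ast}\leq\lambda_0$, so $\lambda^{\ast}=\lambda_0$; note that in this regime the analogues of Lemma~\ref{lemma43} and Lemma~\ref{lemma44} are not needed.

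The main obstacle is the geometric inclusion $R_{\mu}(G\cap H_{\mu})\subset G$ for $\mu>\lambda_0$. This is the Serrin-type fact that the reflected cap remains inside $G$ until the first contact; it relies on the continuity of the cap in the parameter together with the $\cC^2$-regularity of $\partial G$, and it survives the union structure $G=\bigcup_{k}G_k$ precisely because Situations~\eqref{sit1-b} and \eqref{sit2-b} are formulated for $\partial G$ as a whole. Granting this, the rest is routine: verifying $c\geq 0$ is immediate, and the conclusion is a single application of the anti-symmetric maximum principle.
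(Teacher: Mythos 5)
Your proof is correct and essentially the paper's argument: both rest on the observation that $c\geq 0$ on $D_{\mu}$ (since $f$ is nonincreasing on $[0,A]$ and $0\leq u\leq A$), on the Serrin-type inclusion $R_{\mu}(G\cap H_{\mu})\subset G$ valid until the first occurrence of \eqref{sit1-b} or \eqref{sit2-b}, which gives $v_{\mu}\geq 0$ on $H^{\mu}\setminus D_{\mu}=G\cap H^{\mu}$, and on one application of Lemma~\ref{lemma:mp-unbounded} with $K=\Omega=D_{\mu}$ together with $v_{\mu}\to 0$ at infinity. The only difference is cosmetic: you conclude $v_{\mu}\geq 0$ in $H^{\mu}$ directly for every $\mu>\lambda_0$ and read off $\lambda^{\ast}\leq\lambda_0$ from the definition, whereas the paper runs the same estimate by contradiction at $\lambda$ slightly below $\lambda^{\ast}$.
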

\begin{proof}
Suppose that $\lambda^{\ast}>\lambda_0$. Then it follows that none of the situations \eqref{sit1-b}, \eqref{sit2-b} can occur of $\lambda>\lambda^*-\eps$ for some $\eps>0$ small. Fix $\lambda\in(\lambda^{\ast}-\epsilon,\lambda^{\ast})$ and note that we have $R_{\lambda, e}(H_{\lambda}\cap G)=R_{\lambda, e}(G)\cap H^{\lambda}\subset H^{\lambda}\cap G$. Thus $v_\lambda \geq 0$ in $H^{\lambda}\cap G$. Moreover, $v_{\lambda}$ solves
\[
\Psi(-\Delta)v_{\lambda} +c(x)v_{\lambda}\geq0\quad\text{ in $D_{\lambda}=H^{\lambda}\setminus G$.} 
\]
Note that since $f$ is nonincreasing, we have $c\geq0$ on $D_{\lambda}$. Since also $v_{\lambda}=v_{\lambda}\circ R_{\lambda}$ on $T_{\lambda, e}$ and $v_{\lambda}\to 0$ for $|x|\to \infty$, Lemma \ref{lemma:mp-unbounded} with $K=\Omega=D_{\lambda}$ implies $v_{\lambda}\geq0$ in $H^{\lambda}$. Since $\lambda$ was chosen arbitrarily in $(\lambda^{\ast}-\epsilon,\lambda^{\ast})$, this gives a contradiction to the definition of $\lambda^{\ast}$ and hence we must have $\lambda^{\ast}=\lambda_0$.
\end{proof}

\begin{proof}[Proof of Theorem \ref{thm2.3a}]
The proof follows as the proof of Theorem \ref{thm2.3} but with Lemma \ref{lemma43a} in place of Lemma \ref{lemma43} and Lemma \ref{lemma44}.
\end{proof}

\subsection{Proof of Theorem \ref{thm2.4} and Corollary \ref{cor-from-thm2.4}}

\begin{proof}[Proof of Theorem \ref{thm2.4}]
Analogously to the Lemmas \ref{lemma41}--\ref{lemma44} (with $\lambda_0=0$), using the same notation, we have for any $e\in \partial B_1(0)$ that $\lambda^{\ast}=0$, $v_{0,e}\equiv 0$ and $v_{\lambda,e}>0$ in $H_{-\lambda,-e}$ for all $\lambda>0$. We do not need $u<A$, since $G=\{0\}\subset H_{-\lambda,-e}$ for all $\lambda\geq0$. These statements imply that for each $e\in \partial B_1(0)$, $\lambda\in \R$ we have $u\geq u\circ R_{\lambda,e}$ on $H_{\lambda,e}$ or $u\leq u\circ R_{\lambda,e}$ on $H_{\lambda,e}$. \cite[Proposition 6.1]{FJ13} then implies $u$ is radially symmetric and strictly decreasing (due to Lemma \ref{lemma42}).
\end{proof}

\begin{proof}[Proof of Corollary \ref{cor-from-thm2.4}]
If $u$ is nontrivial, there is, by assumption, $A=\max_{\R^d}u=u(x_0)>0$ for some $x_0\in \R^d$. Replacing $u$ with $u(\cdot-x_0)$ the statement follows from Theorem \ref{thm2.4}.	
\end{proof}

\subsection{Proofs of Theorem \ref{thm2.3b} and Corollaries \ref{cor-from-thm2.3} and \ref{cor-from-thm2.3b}}

\begin{proof}[Proof of Theorem \ref{thm2.3b}]
	This statement follows analogously to the proof of Theorem \ref{thm2.3} (see also the proof of Theorem \ref{thm2.4}). However, since $D$ is bounded, the moving plane argument can be done as prescribed in the proof of Theorem \ref{thm2.2} using Theorem \ref{thm:anti-mp} instead of Lemma \ref{lemma:mp-unbounded}. In fact, one can follow the arguments in \cite[Theorem~1.7]{SV14}.
\end{proof}

\begin{proof}[Proof of Corollary \ref{cor-from-thm2.3} and \ref{cor-from-thm2.3b}]
	Both statements follow immediately from the moving plane argument explained in the proof of Theorem \ref{thm2.2}, \ref{thm2.3}, and \ref{thm2.4}.
\end{proof}

%%%%%%%%%%%%%%%%%%%%%%%%%%%%%%%%%%%%%%%%%%%%%%%%%%%%%%%%%%%%%%%%%%%%%%%%%%%%%%%%%%%%%%%%%%%%
\subsection*{Acknowlodgements}
This research of AB was supported in part by an INSPIRE faculty fellowship and DST-SERB grants
 EMR/2016/004810, MTR/2018/000028. 
%%%%%%%%%%%%%%%%%%%%%%%%%%%%%%%%%%%%%%%%%%%%%%%%%%%%%%%%%%%%%%%%%%%%%%%%%%%%%%%%

%%%%%%%%%%%%%%%%%%%%%%%%%%%%%%%%%%%%%%%%%%%%%%%%%%%%%%%%%%%%%%%%%%%%%%%%%%%%%%%%
%%%%%%%%%%%%%%%%%%%%%%%%%%%%%%%%%%%%%%%%%%%%%%%%%%%%%%%%%%%%%%%%%%%%%%%%%%%%%%%%
%\bibliographystyle{plain}      % mathematics and physical sciences


\begin{thebibliography}{10}
\bibitem{AM11} V. Agostiniani and R. Magnanini, \emph{Symmetries in an overdetermined problem for the Green’s function}, Discrete Contin. Dyn. Syst. Ser. S {\bf 4.4} (2011), 791--800.

\bibitem{A92} G. Alessandrini, \emph{A symmetry theorem for condensers}, Math. Methods Appl. Sci. {\bf 15.5} (1992) 315– 320.

\bibitem{B96} J. Bertoin, L\'evy Processes, Cambridge University Press, 1996

\bibitem{BL19}
A.~Biswas and J.~L\H{o}rinczi, \emph{Hopf's Lemma for viscosity solutions to a class of non-local equations with applications}, (2019), preprint available at \url{https://arxiv.org/abs/1902.07452}

\bibitem{BL17}
A. Biswas and J. L\H{o}rinczi, \emph{Maximum principles and Aleksandrov-Bakelman-Pucci type estimates for non-local Schr\"{o}dinger equations with
exterior conditions}, to appear in \emph{SIAM J. Math. Anal.}, preprint available at \url{https://arxiv.org/abs/1710.11596}

%\bibitem{B18a}
%A. Biswas, \emph{Existence and non-existence results for a class of semilinear nonlocal operators with exterior condition}, to appear in Pure and Applied Functional Analysis. preprint available at \url{https://arxiv.org/abs/1805.01293} 

\bibitem{BGR14} K. Bogdan, T. Grzywny and M. Ryznar, \emph{Dirichlet heat kernel for unimodal L\'evy processes}, Stochastic Process. Appl. 124, no. 11 (2014) 3612--3650

\bibitem{BGR14b} K. Bogdan, T. Grzywny and M. Ryznar, \emph{Density and tails of unimodal convolution semi- groups}, J. Funct. Anal. 266 (2014) 3543--3571

\bibitem{BGR15} K. Bogdan, T. Grzywny and M. Ryznar, \emph{Barriers, exit time and survival probability for unimodal L\'evy processes}, Probab. Theory Related Fields 162 (2015) 155--198

\bibitem{BD13} I. Birindelli and F. Demengel, \emph{Overdetermined problems for some fully non linear operators}, Comm. Part. Diff. Eq. {\bf 38.4} (2013), 608--628.

\bibitem{CS09}
L. Caffarelli and L. Silvestre,  \emph{Regularity theory for fully nonlinear integro-differential equations}, Comm. Pure Appl. Math. \textbf{62} (2009), 597--638.

\bibitem{CS09b} A. Cianchi and P. Salani, \emph{Overdetermined anisotropic elliptic problems}, Math. Ann. {\bf 345.4} (2009), 859--881.

\bibitem{LS07} F. Da Lio and B. Sirakov, \emph{Symmetry results for viscosity solutions of fully nonlinear uniformly elliptic equations}, J. Eur. Math. Soc.(JEMS) {\bf 9.2} (2007), 317--330.

\bibitem{DG13} A.-L. Dalibard and D. G\'{e}rard-Varet, \emph{On shape optimization problems involving the fractional laplacian}, ESAIM. Control, Optimisation and Calculus of Variations \textbf{19.4} (2013), 976--1013.

\bibitem{FJ13}
 M.~M.~Fall and S.~Jarohs, \emph{Overdetermined problems with fractional Laplacian}, ESAIM Control Optim. Calc. Var. \textbf{21.4} (2015), 924--938 available online at \url{http://dx.doi.org/10.1051/cocv/2014048}.
 
 \bibitem{FK08} A. Farina and B. Kawohl, \emph{Remarks on an overdetermined boundary value problem}, Calc. Var. Partial Differential Equations {\bf 31.3} (2008), 351--357
 
\bibitem{FV10a} A. Farina and E. Valdinoci, \emph{Flattening results for elliptic PDEs in unbounded domains with applications to overdetermined problems}, Arch. Rational Mech. Anal. {\bf 195.3} (2010), 1025--1058.

\bibitem{FV10b} A. Farina and E. Valdinoci, \emph{Overdetermined problems in unbounded domains with Lipschitz singularities}, Rev. Mat. Iberoam. {\bf 26.3} (2010), 965--974
 
\bibitem{F74} B. Fristedt, \emph{Sample functions of stochastic processes with stationary, independent increments}, Advances in probability and related topics, Vol. 3 (1974) 241--396

\bibitem{GS16} A. Greco and R. Servadei, \emph{Hopf’s lemma and constrained radial symmetry for the fractional Laplacian}, Math. Res. Lett. {\bf 23} (2016), 863--885

\bibitem{GKK15} T.~Grzywny, K.-Y.~Kim, and P.~Kim, \emph{Estimates of Dirichlet heat kernel for symmetric Markov processes}, (2015), preprint available at \url{https://arxiv.org/abs/1512.02717}

%
%\bibitem{JW14}
%S.~Jarohs and T.~Weth, \emph{Symmetry via antisymmetric maximum principles in nonlocal problems of variable order}, Ann. Mat. Pura Appl. (4) (2014), available online at \url{http://dx.doi.org/10.1007/s10231-014-0462-y}.

\bibitem{JKS25} S. Jarohs, T. Kulczycki, and P. Salani, \emph{Qualitative properties of free boundaries for the exterior Bernoulli problem for the half Laplacian}, J. Math. Anal. \textbf{547.1} (2025), Article ID 129285, pp. 18.

\bibitem{KKLL18}
M.~Kim, P.~Kim, J.~Lee, and K.-A.~Lee, \emph{Boundary regularity for nonlocal operators with kernels of variable orders}, (2018), 
to appear in Journal of Functional Analysis, preprint available at \url{https://doi.org/10.1016/j.jfa.2018.11.011}

\bibitem{KSV}
P. Kim, R. Song, and Z. Vondra\v{c}ek, Potential theory of subordinate Brownian motions revisited, in \emph{Stochastic Analysis and Applications to
Finance}, Interdiscip. Math. Sci. \textbf{13}, World Scientific, 2012

%\bibitem{KR18} T.~Kulczycki and M.~Ryznar, \emph{Gradient estimates of Dirichlet heat kernels for unimodal L{\'e}vy processes}, Math. Nachrichten \textbf{291(2-3)} (2018), 374--397.

%\bibitem{KMR}
%M. Kwa\'{s}nicki, J. Ma\l ecki and M. Ryznar:
%Suprema of L\'{e}vy processes, \emph{Ann. Probab.} \textbf{41} (2013) 2047--2065

\bibitem{P90} G. A. Philippin, \emph{On a free boundary problem in electrostatics}, Math. Methods Appl. Sci., 12(5) (1990) 387--392.

\bibitem{R95} W. Reichel, \emph{Radial symmetry by moving planes for semilinear elliptic BVPs on annuli and other non-convex domains}, In Elliptic and parabolic problems (Pont-\`{a}-Mousson, 1994), volume 325 of Pitman Res. Notes Math. Ser., pages 164--182. Longman Sci. Tech., Harlow, 1995.

\bibitem{R96} W. Reichel, \emph{Radial symmetry for an electrostatic, a capillarity and some fully nonlinear overdetermined problems on exterior domains}, Z. Anal. Anwendungen, 15(3) (1996) 619--635.

\bibitem{R97}
W.~Reichel, \emph{Radial symmetry for elliptic boundary-value problems on exterior domains}, Arch. Rational Mech. Anal. \textbf{137.4} (1997) 381--394.



\bibitem{SSV}
R. Schilling, R. Song and Z. Vondra\v{c}ek, \emph{Bernstein Functions}, Walter de Gruyter, 2010


\bibitem{S71} J.~Serrin, \emph{A symmetry problem in potential theory},  Arch. Rational Mech. Anal. \textbf{43} (1971), 304--318.

\bibitem{SS15} L. Silvestre and B. Sirakov, \emph{Overdetermined problems for fully for fully nonlinear elliptic equations}, Calc. Var. Partial Differential Equations {\bf 54} (2015) 989--1007.% available online at \url{http://arxiv.org/abs/1306.6673}

\bibitem{SV14} N.~Soave and E.~Valdinoci, \emph{Overdetermined problems for the fractional Laplacian in exterior and annular sets}, appeared online in Journal d'analysis Math\'{e}matique, preprint available at \url{https://doi.org/10.1007/s11854-018-0067-2}



\end{thebibliography}
\end{document}